\theoremstyle{definition}
\newtheorem{theorem}{Theorem}[section]
\newtheorem{definition}[theorem]{Definition}
\newtheorem{lemma}[theorem]{Lemma}
\newtheorem{proposition}[theorem]{Proposition}
\newtheorem{corollary}[theorem]{Corollary}
\newtheorem{remark}[theorem]{Remark}
\newtheorem{example}%[example]
{Example}
\numberwithin{equation}{section}
\def\bN{\mathbb N}
\def\bR{\mathbb R}
\def\eps{\varepsilon}
\def\cH{\mathcal{H}}
\def\cI{\mathcal{I}}
\def\cL{\mathcal{L}}
\def\cP{\mathcal{P}}
\def\vfi{\varphi}
\def\qed{\hfill$\Box$}
\def\di{\hbox{div}\,}
\def\sgn{\hbox{sgn}\,}
\def\lb{\langle}
\def\rb{\rangle}
\begin{document}
 \title{\bf On the Dirichlet problem for the one-dimensional ROF functional}
\author{Piotr Rybka\\ Institute of Applied Mathematics and Mechanics,
Warsaw University\\ ul.Banacha 2, 02-097 Warsaw, POLAND \\ \small
{\tt rybka@mimuw.edu.pl}\\  \small ORCiD ID \url{https://orcid.org/0000-0002-0694-8201}}

%\address{Institute of Applied Mathematics\\
%Warsaw University}

\maketitle
 \date{}

\noindent\rightline{\it in memory of Ha\"im Brezis}
 
\begin{abstract}
We provide a number of sufficient conditions for that minimizers of the one-dimensional Rudin-Osher-Fatemi  functional satisfy the Dirichlet data in the trace sense. For this purpose we use results specific for the total variation flow. We also show a number of counterexamples. 
\end{abstract}
\bigskip\noindent
{\bf Key words:} $BV$ functions, Rudin-Osher-Fatemi functional, total variation flow, Dirichlet boundary data

\bigskip\noindent
{\bf 2020 Mathematics Subject Classification.}\\ Primary: 35J62 
Secondary:  35B65 35J20 35J70 35J75
%\today
\section{Introduction}
We study  the solvability of the minimization problem for
the Rudin-Osher-Fatemi functional (ROF functional for short) $\Phi_{f,\lambda, \vfi}$ in one dimension over  functions satisfying  the Dirichlet boundary conditions, i.e.,
\begin{equation}\label{dbc}
 \gamma u = \vfi,
\end{equation}
where $\gamma:BV(\Omega) \to L^1(\partial\Omega)$ is the trace operator.

More precisely, we define the ROF functional over  $L^2(\Omega)$ 
%with domain $D(\Phi_{f,\lambda, \vfi})=BV(\Omega)$
as follows
\begin{equation}\label{rof}
 \Phi_{f,\lambda, \vfi}(u) = \left\{ 
 \begin{array}{ll}
  \int_0^L |Du| + \frac \lambda2 \int_0^L (u - f)^2\, dx,\quad \gamma u = \vfi & u \in BV(\Omega),\\
  +\infty & %\hbox{otherwise}. %
  u \in L^2(\Omega)\setminus BV(\Omega).
 \end{array}
\right.
\end{equation}
where $\Omega = (0,L)$.
Here, $|Du|$ is the variation of the measure $Du$, $f\in L^2(\Omega)$ and $\vfi:\partial\Omega\to \bR$, i.e., $\vfi(0) = a,$  $\vfi(L) = b$.

The first issue we face is the lack of the lower semicontinuity of $ \Phi_{f,\lambda, \vfi}$. If we want to study minimizers of $\Phi_{f,\lambda,\vfi}$, then we 
have to consider the lower semicontinuous envelope, $\bar  \Phi_{f,\lambda, \vfi}$ instead of $ \Phi_{f,\lambda, \vfi}$. It is well-known, see \cite{soucek}, that we have
\begin{align}\label{rel-rof}
\bar\Phi_{f,\lambda, \vfi} (u) &= \Phi_{f,\lambda, \vfi}(u) + \int_{\partial\Omega} | \gamma u - \vfi|\, d\cH^0 \\ 
& \equiv\Phi_{f,\lambda, \vfi}(u) + | \gamma u - \vfi|(0) + | \gamma u - \vfi|(L). \nonumber
\end{align}
Now, by the direct method of the calculus of variation we can guarantee the existence of solutions to the following minimization problem,
\begin{equation}\label{r-min}
 \bar \Phi_{f,\lambda, \vfi}( u_{f,\lambda, \vfi})=\min\{ \bar\Phi_{f,\lambda, \vfi}(v): v\in BV(\Omega)\}.
\end{equation}
Moreover, the strict convexity of $\bar\Phi_{f,\lambda, \vfi}$ implies uniqueness of $u_{f,\lambda, \vfi}$. However, in general $\bar \Phi_{f,\lambda, \vfi}( u_{f,\lambda, \vfi})\le
\Phi_{f,\lambda, \vfi}( u_{f,\lambda, \vfi})$, because $u_{f,\lambda, \vfi}$ need not satisfy (\ref{dbc})
in the trace sense.

%In other words, w
%We are interested in finding minimizers of the problem,
%\begin{equation}\label{r1}
 %\min\{ \Phi_{f,\lambda, \vfi}(u): u \in D(\Phi_{f,\lambda, \vfi}), \  %\gamma u = \vfi\}.
%\end{equation}
%The solution of the problem will depend on all pieces of data. We emphasize this by keeping all subscripts in $\Phi_{f,\lambda, \vfi}$.
%It is well-known that $\Phi_{f,\lambda, \vfi}$ is not lower semicontinuous on $\{u\in BV(\Omega): \gamma u = \vfi\}$ so in order to use the direct method of the calculus of variation we have to deal with $\bar\Phi_{f,\lambda, \vfi}$,  the lower semicontinuous envelope of $\Phi_{f,\lambda, \vfi}$. It is well-known, see \cite{soucek}, that

%It is easy to see that since $\bar\Phi_{f,\lambda, \vfi} $ is lower semicontinuous in $L^1$ and strictly convex, so that it has a unique minimizer $u_{f,\lambda, \vfi} $,

Inspired by Ha\"im Brezis Open Problem 3 in  \cite{brezis-19},
we would like to present quite general sufficient conditions for 
$$
\bar  \Phi_{f,\lambda, \vfi}(u_{f,\lambda, \vfi}) =  \Phi_{f,\lambda, \vfi}(u_{f,\lambda, \vfi}) ,
$$
where $u_{f,\lambda, \vfi}$ is a solution of (\ref{r-min}). 
Brezis  stated his question for $\lambda =1$. We find it advantageous to immerse it %he problem 
into a broader family of problems depending on the triple $D=(f,\lambda, \vfi)$. 
Our analysis depends on all pieces of the data so that we  make this dependence explicit in the definition of the ROF functional. At the same time  for the sake of simplicity of notation we will write $\bar \Phi_D$ for $\bar\Phi_{f,\lambda, \vfi} $ and $u_D$ for $u_{f,\lambda, \vfi} $. 

We will be able to answer the question of attainment of the boundary data in the trace sense in the following  cases:\\
(1) The forcing term $f\in BV(\Omega)$ satisfies the boundary conditions, $\gamma f = \vfi$. Then we can show that the minimizer $u_D$ inherits this property, see Theorem \ref{t4.1}. The converse does not hold.\\
(2) When the data $f, \lambda$ are small, i.e. $\lambda \| f\|_{L^1}\le 1$,  then $u_D$ is constant, see Theorem \ref{t4.2}. Hence, the answer to our question is affirmative when $u_D = \vfi= const$. The converse does not hold either. It is worth mentioning that  a similar phenomenon, i.e. zero solution for non-zero data, was noticed for equation
$$
\frac{d}{dx}(\cL(u_x)) = f\quad\hbox{in }(0,1),\qquad u(0) =A,\ u(1) = B,
$$
where $\cL$ is a maximal monotone graph and $f\in L^\infty(0,1)$ is small, see \cite{mucha}. Results of this sort were known earlier, see \cite[Lemma 4]{meyer}, for the ROF functional considered with the natural boundary conditions. 
\\ 
(3) The data $f\in L^2(\Omega)$ is  arbitrary, but $\lambda$ is sufficiently small. We know that there is a terminal state $u_T$ of the dynamical problem 
\begin{equation}\label{r-dyn}
 u_t \in  - \partial \Psi, \qquad u(0) =f \in L^2(\Omega),
\end{equation}
where  $\Psi = \bar\Phi_{0,0,\vfi}$, i.e. the extinction time is finite. To some extend we will treat $u_T$ as easily computable. We show that $u_T$ is the minimizer of $\bar\Phi_D$, when $\lambda$ is small. For enough regular $f$ we are able to provide sufficient conditions for that $u_T$ attains the boundary conditions in the trace sense.

Our main contribution is related to case (3). Our starting point is the observation that the minimization problem (\ref{r-min}) is in fact the time implicit semidiscretization of (\ref{r-dyn}), where $f$ is the initial state and $1/\lambda$ is the time step. Our idea is that if $\lambda$ is small i.e. the time step  $1/\lambda$ is large, then the time discretization reaches the terminal state $u_T$ of the evolution (\ref{r-dyn}). Since the terminal states satisfy $0\in \Psi(u_T)$ we deduce that $u_T$ is a minimizer of $\Psi$. In principle, we may check if $u_T$ satisfies (\ref{dbc}) in the trace sense. We also provide a sufficient condition for that, see Corollary \ref{c-HB}.

We expect that there is $0<\lambda_0$ such that $u_T$ is a miminizer of (\ref{r-min}) 
for all $\lambda\in (0, \lambda_0)$. 
Our lower bound on $\lambda_0$ is explicit, namely $\lambda_0 > \frac 12 \frac 1{\| f- u_T\|_{L^1}}$. 
We carry out this program in two stages. Initially, we assume that $f$ is nice, meaning $f\in BV(\Omega)$ it has a finite number of facets and a finite number of monotonicity changes, see Theorem \ref{t-m1}. %The case of general data is dealt with in Theorem \ref{t-m2}, where we exploit the fact that the dynamics regularizes the initial condition.
Our proof that $u_T$ is a minimizer of $\bar \Phi_{f,\lambda,\vfi}$ depends on analysis of behavior of solutions to (\ref{r-dyn}) in a boundary layer. We prove a sort of a precise maximum principle. For this purpose we devote our attention to the evolution problem (\ref{r-dyn}) in Section \ref{s3}.

Once we complete the program for nice data $f$ we return to the general data, i.e. $f\in L^2(\Omega)$, this is done in Theorem \ref{t-m2}. As expected we will approximate $f$ by functions for which we can answer the original question. The desired approximation is provided by the solution to (\ref{r-dyn}) with initial condition $f$. Here, we need to recall the appropriate result for the solutions of  (\ref{r-dyn}).

Let us stress that our ability to answer the original question does not mean that the answer is positive. In addition, our approach does not give necessary conditions.

%It is also tempting to ask the following question: suppose $u_D$ is a solution to (\ref{r-min}), is there $t\in (0,T_{ext})$ such that $u_D = u(t)$? We do not know the answer. We also emphasize that 
%Here, we use that the minimization problem (\ref{r-min}) is in fact a time semi-discretization of  (\ref{r-dyn}) and roughly $\frac 1 \lambda$ is the time step in the semidiscretization , but it does not imply that $u_T = u(\cdot, \frac 1 \lambda)$.

%In other words, $u_D$ is an approximation of $u(\frac 1 \lambda)$, the solution to  (\ref{r-dyn}) at time $\frac 1 \lambda$ , but they are different in general. In general $u(\frac 1 \lambda)$ is not  easily accessible, however the terminal state of the dynamics of (\ref{r-dyn}) is. We recall that it is well-known that dynamics of  (\ref{r-dyn}) reaches a unique steady state in finite time. It is relatively easy to see if $u(T_{ext})$, the terminal state of  (\ref{r-dyn}) satisfies the boundary data in the trace sense. 

We close the paper by showing examples of data for which minimizers of (\ref{r-min}) certainly do not satisfy (\ref{dbc}) in the trace sense.
We conclude by  saying that we have a range of conditions leading to a positive answer to the original question, but none of them seems also necessary.

The paper is organized as follows. In Section \ref{s2} we describe the ROF functional and its basic properties. In Section \ref{s3} we discuss the total variation flow and the Comparison Principle. The main affirmative results are shown in Section \ref{s4}, while the negative ones are established in Section \ref{s5}.

\begin{comment}
does not yields minimizers at all in the subset of $BV$ functions satisfying the 
given data $\vfi$

In general problem (\ref{r1}) has no solutions in the trace sense. Nonetheless, we look for necessary and sufficient conditions for solvability of  (\ref{r1}) in the trace sense. We will argue that the problem has a solution for sufficiently regular/close data, permitting to compute its trace. If so, we will show that it is sufficient to study  (\ref{r1}) for smooth $f$. The general data are handled by means of mollification.

We shall prove that the existence of a solution $u\in BV(\Omega)$ to (\ref{r1}) is almost equivalent to $f$ (belonging to $C^\infty$) safisfying the same boundary condition, i.e. 
\begin{equation}\label{r2}
 \gamma f = \vfi.
\end{equation}
By saying `almost equivalent' we mean: if (\ref{r2}) holds, then there exists a solution to  (\ref{r1}). However, if (\ref{r2}) is violated, then existence is a rare event, see Theorem xx for the precise statement, where we construct solutions in the viscosity sense.

We also justify our claim about sufficiency of considering smoot data by proving that solutions to the approximating problem converge in the strict sense, see xxx. The strict convergence implies convergence of traces.
\end{comment}

\section{The ROF functional over $BV(0,L)$ with Dirichlet boundary condition} \label{s2}

In this section we gather the preliminary  material regarding convex analysis.

\subsection{The subdifferential of $\bar\Phi_{f,\lambda,\vfi}$}

%Since f
For the sake of simplicity of notation we shall frequently write $\bar\Phi_D$,  instead of $\bar\Phi_{f,\lambda,\vfi}$, when the actual data are less important. Obviously, functional  $\bar\Phi_D$ defined in (\ref{rel-rof}) is convex. 
We recall that due to \cite{soucek} functional $\bar\Phi_D$ is also lower semicontinuous in the $L^1$ topology. Hence, the subdifferential of $\bar\Phi_D$,
$$
\partial \bar\Phi_D(u)=\{ z\in L^2(\Omega): \forall h\in L^2(\Omega)\quad 
\bar\Phi_D(u+ h) - \bar\Phi_D(u) \ge \lb z, h\rb\}
$$
is well defined. 
 
It is also well-known that  $u_D$ is a solution to the  minimization problem (\ref{r-min})
%, then $u_{f,\lambda,\vfi}$ is its minimizer 
if and only if 
\begin{equation}\label{r-0ink}
 0\in \partial \bar\Phi_D. %{f,\lambda,\vfi}.
\end{equation}
%We recall that since $\bar\Phi_D$ is convex and lower semicontinuous we may define its subscripts at $u\in D(\bar\Phi_D)$,

In order to extract  properties of minimizers of $\bar\Phi_D$ we have to analyse  inclusion (\ref{r-0ink}). In fact, the characterization of the subdifferential $\partial \bar\Phi_{0,0,\vfi}$ is known for $D=(0,0,\vfi)$, see \cite{andreu1}, \cite{andreu2} and \cite{mazon}. This leads to the following statement, the details are left to the interested reader: % we have:
\begin{proposition}\label{p2.1}
If  $\bar\Phi_{f,\lambda,\vfi}$ is defined by (\ref{rel-rof}), $\lambda\ge 0$, then 
$z\in \partial \bar\Phi_{f,\lambda,\vfi}(u)$ if and only if $z\in W^{1,\infty}(\Omega)$ and 
\begin{align}\label{r2.2}
 &\| z \|_{L^\infty} \le 1;\nonumber\\
& (z, Du) = |Du| \quad\hbox{as measures};\nonumber\\
&0= -z_x + \lambda(u- f)\equiv - \di z + \lambda(u- f);\\
&[z, \nu^\Omega]\in \sgn(\vfi - u) \quad\hbox{on }\partial\Omega,\nonumber
\end{align}
where $\nu^\Omega$ is the outer normal to the boundary of $\Omega$ and $\sgn$ is multivalued sign function, i.e. $\sgn 0 = [-1,1]$. Since our domain is one-dimensional, then the last condition above reads as
\begin{equation}\label{r2.3}
- z(0) \in \sgn (\vfi(0) - \gamma u(0)),\qquad
z(L)\in \sgn (\vfi(L) - \gamma u(L)).
\end{equation}\qed
\end{proposition}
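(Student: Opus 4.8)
The plan is exactly the reduction indicated just before the statement: combine the known description of $\partial\bar\Phi_{0,0,\vfi}$ from \cite{andreu1,andreu2,mazon} with the smoothness of the fidelity term. Write $\bar\Phi_{f,\lambda,\vfi}=\Psi+G$, where $\Psi:=\bar\Phi_{0,0,\vfi}$ (so $\Psi(u)=\int_0^L|Du|+|\gamma u-\vfi|(0)+|\gamma u-\vfi|(L)$ for $u\in BV(\Omega)$, and $\Psi(u)=+\infty$ otherwise) and $G(u):=\frac\lambda2\int_0^L(u-f)^2\,dx$. Since $G$ is convex and finite, hence continuous, on all of $L^2(\Omega)$, the Moreau--Rockafellar sum rule gives
\[
\partial\bar\Phi_{f,\lambda,\vfi}(u)=\partial\Psi(u)+\lambda(u-f),
\]
because $\nabla G(u)=\lambda(u-f)$. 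The cited works describe $\partial\Psi$ via Anzellotti's pairing $(z,Du)$ between bounded vector fields of integrable divergence and gradients of $BV$ functions, together with the associated Gauss--Green formula: a function lies in $\partial\Psi(u)$ iff it equals $-\di z$ for some such $z$ with $\|z\|_{L^\infty}\le1$, $(z,Du)=|Du|$ as measures, and $[z,\nu^\Omega]\in\sgn(\vfi-u)$ on $\partial\Omega$. Feeding this into the previous identity gives the conditions of the proposition, the equation $0=-\di z+\lambda(u-f)$ being just the requirement $-\lambda(u-f)=-\di z\in\partial\Psi(u)$. In one space dimension $\di z=z_x$ inherits the integrability of the right-hand side of that equation, so $z\in H^1(0,L)\subset C([0,L])$ and the pointwise endpoint relations \eqref{r2.3} are meaningful; this spelling-out is what the paper leaves to the reader.

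For completeness I would also verify the two implications directly from the definition of the subdifferential. Sufficiency ($\Leftarrow$): given $z$ with the listed properties and any $h\in L^2(\Omega)$, one may assume $u+h\in BV(\Omega)$, otherwise $\bar\Phi_{f,\lambda,\vfi}(u+h)=+\infty$ and there is nothing to prove. Using $\|z\|_{L^\infty}\le1$ and $(z,Du)=|Du|$ one gets $\int_0^L|D(u+h)|-\int_0^L|Du|\ge\int_0^L(z,Dh)$, and the Gauss--Green formula rewrites the right-hand side as $-\int_0^L z_x h\,dx+z(L)\gamma h(L)-z(0)\gamma h(0)$. The two boundary terms are cancelled by the convexity inequalities for $t\mapsto|t-\vfi(0)|$ at $t=\gamma u(0)$ and for $t\mapsto|t-\vfi(L)|$ at $t=\gamma u(L)$, which are licensed precisely by \eqref{r2.3}. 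Since the fidelity increment equals $\lambda\int_0^L(u-f)h\,dx+\frac\lambda2\|h\|_{L^2}^2$, adding the three pieces and invoking $-z_x+\lambda(u-f)=0$ gives $\bar\Phi_{f,\lambda,\vfi}(u+h)\ge\bar\Phi_{f,\lambda,\vfi}(u)+\frac\lambda2\|h\|_{L^2}^2$ for every $h$, which is the asserted optimality inequality (in agreement with \eqref{r-0ink}).

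The reverse implication ($\Rightarrow$) is where the actual content sits: from the variational inequality satisfied by $u$ one has to produce the field $z$. In one dimension this is fairly explicit, since $z$ is forced to be $z(x)=z(0)+\lambda\int_0^x(u-f)\,dt$ and only the free constant $z(0)$ must be adjusted so that $\|z\|_{L^\infty}\le1$, $(z,Du)=|Du|$ and the two endpoint sign conditions hold simultaneously; the cleaner route is again to quote the characterization of $\partial\Psi$ from \cite{andreu1,andreu2,mazon} and transfer it through the sum rule. I expect this to be the only genuine obstacle: securing $\|z\|_{L^\infty}\le1$ together with the tangency identity $(z,Du)=|Du|$ --- which at the jumps of $u$ forces $z$ to equal the sign of the jump and on the flat parts $\{Du=0\}$ imposes only $|z|\le1$ --- is exactly what the Anzellotti machinery in those references supplies, so I would present this direction as a consequence of that work rather than reprove it, in the spirit of the remark that the details are left to the reader.
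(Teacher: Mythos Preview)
Your proposal is correct and is precisely the argument the paper has in mind: the paper does not prove the proposition but merely cites the characterization of $\partial\bar\Phi_{0,0,\vfi}$ from \cite{andreu1,andreu2,mazon} and leaves the passage to $\bar\Phi_{f,\lambda,\vfi}$ to the reader, which is exactly your Moreau--Rockafellar step $\partial\bar\Phi_{f,\lambda,\vfi}=\partial\Psi+\lambda(u-f)$. Your additional direct verification of sufficiency and your remark that for general $f\in L^2$ one only obtains $z\in H^1(0,L)\subset C([0,L])$ (rather than the stated $W^{1,\infty}$, which requires $f\in L^\infty$) are welcome clarifications that go beyond what the paper supplies.
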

The difficulty related with this proposition is that the absolute value appearing in the definition of the total variation $\int_\Omega |Du|$ is not differentiable. Its linear growth is another issue. 

We may approach the question of the Euler-Lagrange equation from a different direction. Namely, we may approximate $|\cdot|$ by a smooth function of the quadratic growth, e.g. $W_\eps(p) +  \frac \eps 2 p^2$, where $W_\eps$ is a mollification of the absolute value. Then, we show that the Euler-Lagrange for the regularization of $\Phi_D$, i.e.
$$
\Phi^\eps_D (u) = \int_\Omega (W_\eps(u_x) +  \frac \eps 2 |u_x|^2 + \frac\lambda2 (u- f)^2)\, dx
$$
converges to $z$. In this way we may recover (\ref{r2.2}) except for (\ref{r2.2}$_2$). This program was carried out in \cite{NaRy}.

\subsection{The continuous dependence of minimizers upon data}

%Usually, the ROF functional is defined over  $BV(0,L)$ without specifying the boundary data, see \cite{ROF}. It is well-known that if $\vfi:\partial\Omega\to\bR$ is given, then $\Phi_{f,\lambda, \vfi}$ considered over the following domain
%$$
%D_\vfi = \{ u\in BV(0,L): \gamma u = \vfi\}
%$$
%is not lower semicontinuous. It is also known that the lower semicontinuous envelope $\bar\Phi_{f,\lambda, \vfi}$ of $\Phi_{f,\lambda, \vfi}$ is given by, see \cite{soucek},
%\begin{equation}\label{rel-rof}
% \bar\Phi_{f,\lambda, \vfi} (u) =\left\{ 
% \begin{array}{ll}
%  \int_\Omega |Du| + \int_{\partial\Omega} | \gamma u - \vfi|\,d\cH^{n-1}+\frac \lambda2 \int_\Omega (u - f)^2\, dx & u \in BV(\Omega),\\
%  +\infty & u \in L^2(\Omega)\setminus BV(\Omega).
% \end{array}
%\right.
%\end{equation}

After gathering the preliminary material we  present our first observation on the stability of minimizers with respect to data. On the one hand they looks promising, on the other hand it turns out to be our tool in our construction of counterexamples.

\begin{proposition}\label{pp1}
(a) Let us suppose that $f_n$, $n\in \bN$,  belong to $L^2(\Omega)$, $\lambda>0$ and $u_n$ are the  corresponding minimizers of $\bar \Phi_{f_n,\lambda, \vfi}$. If $f_n$ converge to $f_0$ in $L^2(\Omega)$, then  $u_n$ go to $u_0$ in  $L^2(\Omega)$. Moreover, 
$$
\lim_{n\to \infty} \bar \Phi_{f_n,\lambda, \vfi}(u_n) = \bar \Phi_{f_0,\lambda, \vfi}(u_0).
$$
(b) Let us suppose that $f\in L^2(\Omega)$, $\lambda>0$, $\{\vfi_n\}_{n=1}^\infty\subset \bR^2$ and $u_n$ are the  corresponding minimizers of $\bar \Phi_{f,\lambda, \vfi_n}$. If $\vfi_n$ converge to $\vfi_0$ in $\bR^2$, then $\lim_{n\to\infty} u_n=u_0$ in  $L^2(\Omega)$. Moreover, 
\begin{equation}\label{p1-r1}
\lim_{n\to \infty} \bar \Phi_{f,\lambda, \vfi_n}(u_n) = \bar \Phi_{f_0,\lambda, \vfi}(u_0).
\end{equation}
(c) Let us suppose that $f\in L^2(\Omega)$, $\{\lambda_n\}_{n=1}^\infty\subset \bR_+$ and $u_n$ are the  corresponding minimizers of $\bar \Phi_{f,\lambda_n, \vfi}$. If $\lambda_n$ converge to $\lambda_0>0$, then $u_n\to u_0$ in  $L^2(\Omega)$. Moreover, 
\begin{equation}\label{p1-r2}
\lim_{n\to \infty} \bar \Phi_{f,\lambda, \vfi_n}(u_n) = \bar \Phi_{f_0,\lambda, \vfi}(u_0).
\end{equation}
\end{proposition}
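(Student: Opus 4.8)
The three parts rest on one scheme — a standard equi-coercivity and $\Gamma$-convergence argument — the only subtlety being that the perturbation enters the functional itself and not the constraint. I describe the plan for part (a) and indicate the changes for (b) and (c) afterwards. First I would use almost-minimality: the limit minimizer $u_0$ lies in $BV(\Om)$, so $\bar\Phi_{f_n,\lambda,\vfi}(u_0)<\infty$, and testing while using $f_n\to f_0$ in $L^2(\Om)$ (only the fidelity term moves, and it converges for the fixed $u_0$) gives $\bar\Phi_{f_n,\lambda,\vfi}(u_n)\le\bar\Phi_{f_n,\lambda,\vfi}(u_0)\to\bar\Phi_{f_0,\lambda,\vfi}(u_0)<\infty$. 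Hence $\limsup_n\bar\Phi_{f_n,\lambda,\vfi}(u_n)\le\bar\Phi_{f_0,\lambda,\vfi}(u_0)$ and the energies are bounded, say by $C$. From $\tfrac\lambda2\|u_n-f_n\|_{L^2}^2\le C$ with $\lambda>0$ fixed and $\{f_n\}$ bounded in $L^2(\Om)$, the sequence $\{u_n\}$ is bounded in $L^2(\Om)$, hence in $L^1(\Om)$; with $\int_0^L|Du_n|\le C$ it is bounded in $BV(\Om)$, and the one-dimensional estimate $\|u\|_{L^\infty}\le\tfrac1L\|u\|_{L^1}+|Du|(\Om)$ bounds it in $L^\infty(\Om)$. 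By compactness of bounded sets of $BV(\Om)$ in $L^1(\Om)$, every subsequence of $\{u_n\}$ has a further subsequence $u_{n_k}\to u_*$ in $L^1(\Om)$ and a.e.\ with $u_*\in BV(\Om)$, and the uniform $L^\infty$ bound upgrades this to convergence in $L^2(\Om)$.

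To identify $u_*$, I would observe that on $\{u_{n_k}\}$ the functionals with data $f_{n_k}$ and with $f_0$ differ only through the fidelity term, and
\[
\bigl|\bar\Phi_{f_{n_k},\lambda,\vfi}(u_{n_k})-\bar\Phi_{f_0,\lambda,\vfi}(u_{n_k})\bigr|=\tfrac\lambda2\Bigl|\int_0^L(f_0-f_{n_k})(2u_{n_k}-f_0-f_{n_k})\,dx\Bigr|\le C'\|f_0-f_{n_k}\|_{L^2}\to0,
\]
since $2u_{n_k}-f_0-f_{n_k}$ is bounded in $L^2(\Om)$; in other words $\bar\Phi$ depends continuously on the data, uniformly on the sublevel set $\{\bar\Phi\le C\}$. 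Then the $L^1$-lower semicontinuity of $\bar\Phi_{f_0,\lambda,\vfi}$ from \cite{soucek} and the bound above give
\[
\bar\Phi_{f_0,\lambda,\vfi}(u_0)\le\bar\Phi_{f_0,\lambda,\vfi}(u_*)\le\liminf_k\bar\Phi_{f_0,\lambda,\vfi}(u_{n_k})=\liminf_k\bar\Phi_{f_{n_k},\lambda,\vfi}(u_{n_k})\le\bar\Phi_{f_0,\lambda,\vfi}(u_0),
\]
so every inequality is an equality; strict convexity forces $u_*=u_0$, and $\bar\Phi_{f_{n_k},\lambda,\vfi}(u_{n_k})\to\bar\Phi_{f_0,\lambda,\vfi}(u_0)$. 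As this holds along a subsequence of an arbitrary subsequence, the full sequence $u_n$ converges in $L^2(\Om)$ to $u_0$ with $\bar\Phi_{f_n,\lambda,\vfi}(u_n)\to\bar\Phi_{f_0,\lambda,\vfi}(u_0)$, which proves (a).

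For (b) I would replace the fidelity term by the boundary term and use the elementary inequality $\bigl||s-a|-|s-b|\bigr|\le|a-b|$ with $s=\gamma u(0)$ (and likewise at $L$), which makes the comparison step valid with no control of the traces, while coercivity and compactness are unchanged since $f,\lambda$ are fixed. For (c) I would note that $\lambda_n\to\lambda_0>0$ gives $\lambda_n\ge\lambda_0/2$ for large $n$, so the uniform $L^2$ bound on $\{u_n\}$ persists, and the functionals with $\lambda_{n_k}$ and $\lambda_0$ differ by $\tfrac12|\lambda_{n_k}-\lambda_0|\,\|u_{n_k}-f\|_{L^2}^2\to0$ on the bounded-energy sequence. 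I do not expect a genuine obstacle in any of this. The two points that need care are extracting the uniform $BV(\Om)$ bound from almost-minimality alone — which is exactly why the hypothesis $\lambda_0>0$ is needed in (c) — and checking that $\bar\Phi$ with the perturbed data and $\bar\Phi$ with the limit data differ by an $o(1)$ term on the relevant sublevel set, so that the lower-semicontinuity inequality for the limit functional transfers to the perturbed ones; the passage from subsequential to full convergence is the usual Urysohn argument based on uniqueness of the minimizer.
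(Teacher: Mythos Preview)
Your argument is correct. For parts (b) and (c) it is essentially the same as the paper's: bound the minimizers in $BV(\Omega)$ by testing with a fixed competitor, extract an $L^2$-convergent subsequence, use the triangle inequality $\bigl||\gamma v-\vfi_n|-|\gamma v-\vfi_k|\bigr|\le|\vfi_n-\vfi_k|$ (or the analogous $\lambda$-perturbation estimate) to pass between functionals, and close with lower semicontinuity and uniqueness of the minimizer.

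For part (a), however, the paper takes a genuinely different and shorter route. Instead of compactness, it exploits the Euler--Lagrange inclusion $0=-z_{n,x}+\lambda(u_n-f_n)$ from Proposition~\ref{p2.1}: subtracting the equations for indices $n$ and $k$, testing with $u_n-u_k$, and using monotonicity of the total-variation subdifferential gives the resolvent contraction estimate
\[
\|u_n-u_k\|_{L^2}\le\|f_n-f_k\|_{L^2},
\]
so $\{u_n\}$ is Cauchy in $L^2(\Omega)$ directly, with no subsequence extraction. Your compactness\,/\,$\Gamma$-convergence scheme is more robust --- it does not need the subdifferential characterization and would transfer verbatim to functionals without a monotone structure --- but the paper's argument is quantitatively sharper, yielding a $1$-Lipschitz dependence of the minimizer on $f$ rather than mere continuity. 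After obtaining convergence of the minimizers, both proofs identify the limit and establish energy convergence by the same lower-semicontinuity sandwich you wrote down.
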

\begin{proof}
Part (a), we will first show convergence of minimizers.
%\end{proof}
%\begin{lemma}\label{l1}
 %Let us suppose that $f_n\in L^2(\Omega)$ and $u_n$ is the corresponding solution to (\ref{r1}) (the minimizer of $\bar\Phi_f$). If $f_n$ converges to $f_0$ in $L^2$, then $u_n$ converges to $u_0$  in $L^2$ and $u_0$ is a minimizer of $\Phi_{f_0}$.
%\end{lemma}
%\begin{proof}
Since $u_n$ is a minimizer, then $0\in \partial\Phi_{f_n,\lambda, \vfi}(u_n)$. Now, we are using the characterization of the elements of the subdifferential provided by Proposition \ref{p2.1}. %he same time we know that, i
Since  $0\in \partial\Phi_{f_n,\lambda,\vfi}(u_n)$,  then there exist $z_n\in W^{1,\infty}(\Omega)$ such that
$$
0 = -\di z_n + \lambda (u_n- f_n).
$$
We take the difference of the right-hand-side (RHS for short) above for $n$ and $k$. Subsequently, we compute the inner product of the result with $u_n- u_k$. Hence we reach,
%We also know that $\partial\Phi_f$ is . As a result
$$
0=(\di z_n - \di z_k, u_n- u_k) + \lambda((u_n-f_n) - (u_k- f_k), u_n- u_k).
$$
Now, we use the fact that the subdifferential of the total variation is a maximal monotone operator. Thus, we reach
$(\di  z_n - \di z_k, u_n- u_k)\ge 0$. Hence, we deduce,
$$
0 \ge  \| u_n- u_k\|^2  - (f_n- f_k, u_n-u_k).
$$
At this point the Cauchy-Schwarz inequality yields
$$
\| f_n- f_k \| \ge \| u_n- u_k\|,
$$
i.e. $u_n$ is a Cauchy sequence in $L^2$ converging to $u_0\in L^2(\Omega)$.

Now, we are going to show that $u_0$ is a minimizer. Let us take any $\bar u\in BV(\Omega)$. We have to show that $\Phi_{f_0,\lambda,\vfi}(\bar u) \ge \Phi_{f_0,\lambda,\vfi}(u_0)$. Of course, we have
$$
\bar\Phi_{f_n,\lambda,\vfi}(\bar u) \ge \bar\Phi_{f_n\lambda,\vfi}(u_n).
$$
We take the limit on both sides. Then, we have
$$
\bar\Phi_{f_0,\lambda,\vfi}(\bar u) = \lim_{n\to\infty} \bar\Phi_{f_n\lambda,\vfi}(\bar u) \ge \liminf_{n\to\infty} \bar\Phi_{f_n,\lambda,\vfi}(u_n) \ge \bar\Phi_{f_0,\lambda,\vfi}(u_0),
$$
where we used the lower semicontinuity of $\bar\Phi_{0,0,\vfi}(u)\equiv \int_\Omega |Du| + \int_{\partial\Omega}|\gamma u - \vfi|$.
%\end{proof}

Now, we can show that the convergence $u_n\to u_0$ is better than just in $L^2$.
%\begin{proposition}
 %Let us suppose that the assumptions of  Lemma  \ref{l1} are satisifed. Then,
 %$$
 %\lim_{n\to\infty}\left(\int_\Omega | Du_n| + \int_{\partial\Omega} |\gamma u_n - \vfi|\right) =
 %\int_\Omega | Du_0| + \int_{\partial\Omega} |\gamma u_0 - \vfi|.
 %$$ 
%\end{proposition}
%\begin{proof}
%Of course, since we assumed that and due to Lemma \ref{l1} we know that $u_n \to u_0$ in $L^2(\Omega)$ we arrive at
The lower semicontinuity of $\bar\Phi_{f,\lambda,\vfi}$ and the convergence  $f_n\to f$ in $L^2(\Omega)$ yield
$$
\liminf_{n\to\infty} \bar\Phi_{f_n,\lambda,\vfi}(u_n) \ge \bar\Phi_{f_0,\lambda,\vfi}(u_0).
$$
%We also have
%$$
%\bar\Phi_{f_n}(u_0) \ge \Phi_{f_n}(u_n).
%$$
At the same time we have
$$
\bar\Phi_{f_0,\lambda,\vfi}(u_0) = \lim_{n\to\infty}\bar\Phi_{f_n,\lambda,\vfi}(u_0) \ge
\liminf_{n\to \infty}\bar\Phi_{f_n,\lambda,\vfi}(u_n) \ge \bar\Phi_{f_0,\lambda,\vfi}(u_0).
$$
Hence, all inequalities become equalities % used  $f_n\to f$ in $L^2(\Omega)$. This convergence permits us to infer the 
and our claim follows.
%\end{proof}
%Now, we can show that the convergence of $u_n$ to $u_0$ is better than just in $L^2$. We have:
%\begin{proposition}
% Let us suppose that the assumptions of Lemma \ref{l1} hold. Then,
% $$
% \int_\Omega |D u_n| + \int_{\partial\Omega} |\vfi - \gamma u_n|\to  \int_\Omega |D u_0| + \int_{\partial\Omega}  |\vfi - \gamma u_0|.
% $$
%\end{proposition}
%\begin{proof}
%Of course, we have 
%$$
%\varliminf_{n\to \infty} \bar\Phi_{f_n}(u_n) \ge\bar\Phi_{f_0}(u_0).
%$$
%Moreover, 
%$\bar\Phi_{f_n}(u_0) \ge\bar\Phi_{f_n}(u_n)$, then taking the limit we reach,
%$$
%\bar\Phi_{f_0}(u_0) = \lim_{n\to 0}\bar\Phi_{f_n}(u_0) \ge 
%\varliminf_{n\to \infty} \bar\Phi_{f_n}(u_n) \ge\bar\Phi_{f_0}(u_0).
%$$
%Then, we conclude that
%$$
%\lim_{n\to \infty}
%\int_\Omega |D u_n| + \int_{\partial\Omega} |\vfi - \gamma u_n| + \frac\lambda2 \int_\Omega(u_n- f_n)^2\, dx = \Phi_{f_0}(u_0).
%$$
%Since $\| u_n - f_n\|_{L^2} \to \|u_0 - f_0\|_{L^2}$ we deduce that
%$$
%\lim_{n\to \infty} 
%\int_\Omega |D u_n| + \int_{\partial\Omega} |\vfi - \gamma u_n| =
%\int_\Omega |D u_0| + \int_{\partial\Omega} |\vfi - \gamma u_0|.
%$$
%\end{proof}

%For this purpose we make the following observation.
%\begin{lemma}
%Let  us suppose that the sequence $u_m\in BV(\Omega)$ is such that $\gamma u_n = \vfi$, $u_n\to u_0$ in $L^2$ and
% $$
%\int_{\Omega_n} |D u_n| \to 0,
%$$
%where $\Omega_n = \{x\in \Omega: \dist(x, \partial\Omega) <\frac1n\}$. Then, $\gamma u_0 = \vfi.$
%\end{lemma}

Part (b). We are going to show the continuous dependence of solution on the boundary data.
%\begin{proposition}
 %Let us suppose that $f\in L^2(\Omega)$ and $\{\vfi_n\}_{n=1}^\infty\subset \bR^2$ is a sequence of boundary data and $\{u_n\}_{n=1}^\infty\subset BV(\Omega)$ is the sequence of corresponding miminizers of $\bar\Phi_{f, \vfi_n}$. For the sake of simplicity we will write  $\bar\Phi_{ \vfi_n}$ instead of  $\bar\Phi_{f, \vfi_n}$. Then,\\
 %(1) If $\lim_{n\to\infty} \vfi_n = \vfi_0$, then there is a subsequence (not relabeled)  $\{u_n\}_{n=1}^\infty$ such that $\lim_{n\to\infty} u_n = u_0$, the convergence is in $L^2$ and $u_0$ is a minimizer of $\bar\Phi_{\vfi_0}$.\\
 %(2) 
 %$$
 %\lim_{n\to\infty} \bar\Phi_{\vfi_n}(u_n) =  \bar\Phi_{\vfi_0}(u_0).
 %$$
%\end{proposition}
%\begin{proof}
First of all we notice that for all $v\in BV(\Omega)$, we have 
$$
\bar\Phi_{f_n,\lambda,\vfi} (v) = \bar\Phi_{f_n,\lambda,\vfi_k} (v) + |\gamma v - \vfi_n| -|\gamma v - \vfi_k|.
$$
Now, the triangle inequality implies that $ ||\gamma v - \vfi_n| -|\gamma v - \vfi_k|| \le | \vfi_n - \vfi_k|$. Hence,
\begin{equation}\label{r3}
 \lim_{n\to\infty} \bar\Phi_{f_n,\lambda,\vfi_n}(v) = \bar\Phi_{f_n,\lambda,\vfi_0}(v) .
\end{equation}

Let us now suppose that $u_n$ is a minimizer of $\bar\Phi_{f_n,\lambda,\vfi_n}$. Since
$|Du_n|(\Omega)\le \bar\Phi_{f_n,\lambda,\vfi_n}(0)$ we deduce that there is $M>0$ such that $\| u_n\|_{BV}\le M$ for all $n\in \bN$. Thus, we may extract a subsequence (not relabeled) converging in $L^2$ to $u_0$. 
By the lower semicontinuity of $\bar\Phi_{f_n,\lambda,\vfi_0}$, we have
\begin{equation}\label{r4}
 \liminf_{n\to \infty} \bar\Phi_{f_n,\lambda,\vfi_0} (u_n) \ge \bar\Phi_{f_n,\lambda,\vfi_0} (u_0).
\end{equation}
As a result $u_0\in  BV(\Omega).$

We wish to show that $u_0$ is a minimizer of  $\bar\Phi_{f_n,\lambda,\vfi_0}$. We take $v\in BV(\Omega)$. Then we have
$$
\bar\Phi_{f_n,\lambda,\vfi_n}(v) \ge  \bar\Phi_{f_n,\lambda,\vfi_n}(u_n) =
\bar\Phi_{f_n,\lambda,\vfi_0} (u_n) + |\gamma u_n-\vfi_0| - |\gamma u_n-\vfi_n|.
$$
Since the difference of boundary terms goes to zero, we see that (\ref{r3}) and (\ref{r4}) imply the following inequalities
\begin{equation}\label{r5}
\bar\Phi_{f_n,\lambda,\vfi_0} (u_0) = \lim_{n\to\infty} \bar\Phi_{f_n,\lambda,\vfi_n}(v) \ge \liminf_{n\to \infty} \bar\Phi_{f_n,\lambda,\\vfi_n}(u_n) = \liminf_{n\to \infty} \bar\Phi_{f_n,\lambda,\vfi_0}(u_n)  \ge \bar\Phi_{f_n,\lambda,\vfi_0}(u_0). 
\end{equation}
Since $u_0$ is a unique minimizer of $\bar\Phi_{f,\lambda,\vfi_0}$, we deduce that not only a subsequence of $u_n$ converges to $u_0$ but the whole sequence does. The first part of (b) follows.

In order to show (\ref{p1-r1}) we notice that (\ref{r3}) and (\ref{r5}) imply
$$
\bar\Phi_{f_n,\lambda,\vfi_0}(u_0) =  \lim_{n\to\infty} \bar\Phi_{f_n,\lambda,\vfi_n}(u_0) \ge
\liminf_{n\to\infty} \bar\Phi_{f_n,\lambda,\vfi_n}(u_n) \ge\bar\Phi_{f_n,\lambda,\vfi_0}(u_0). 
$$
As as a result all inequalities are equalities. Formula (\ref{p1-r1}) holds.

(c) If $\lambda_n \to \lambda_0$, then the corresponding minimizers form a bounded sequence in $BV$. We may select a subsequence convergent to $u_0$ in $L^2$. From this point the argument goes like that in part (b). We deduce that $u_0$ is the unique minimizer and (\ref{p1-r2}) follows.
\end{proof}

This proposition is not sufficient to claim that the limit minimizer, $u_0$, has the right trace.

\section{The gradient flow of the total variation with the Dirichlet boundary condition}\label{s3}

%For a lsc functional $\bar\Phi_D$ we may define its subdifferential,
%$$
%\partial\bar\Phi_D(u) =\{ \xi \in L^2(\Omega): \forall h\in L^2(\Omega) \quad \bar\Phi_D(u+ h)-\bar\Phi_D(u ) \ge \lb h,\xi\rb\}.
%$$
%In this case we introduce $D(\partial\bar\Phi_D)=\{u\in L^2(\Omega):\  \partial\bar\Phi_D(u) \neq\emptyset\}$.

%of $\Phi_f$ defined over $D_\vfi$ we have:

We present here the information necessary for the development of %necessary 
tools related to the 
%the basic information on the 
gradient flow generated by 
%We recall that for 
a lower semicontinuous convex functional $\bar \Phi_{0,0,\vfi}$. We offer two viewpoints on this question. The first one is based on the convex analysis tools.  In this approach the gradient flow
%the gradient flow 
is understood as the differential inclusion (\ref{r-dyn}).

The second viewpoint is based on the approximation the differential inclusion (\ref{r-dyn}) by strictly parabolic problems, see \cite{mury} and \cite{milena}.

We first notice that the existence of the dynamical problem (\ref{r-dyn}) follows from the well-established theory of monotone operators, see \cite{brezis}. We can show:
\begin{proposition} ~ \\
(a) Let us suppose that $u_0\in D(\partial \bar\Phi_{0,0,\vfi})$. Then, there is a unique solution to (\ref{r-dyn}), i.e.
$$
u_t \in - \partial \bar \Phi_{0,0,\vfi}(u), \qquad u(0) = u_0
$$
that 
$$
u\in C([0,\infty), L^2(\Omega)), \quad \frac{du}{dt}\in L^\infty(0,\infty), \ u(0) =u_0.
$$ 
(b) Let us suppose that $u_0\in L^2$. Then, there is a unique solution to (\ref{r-dyn}). Moreover, 
$$
u\in C([0,\infty), L^2(\Omega)),\quad  t\frac{du}{dt}\in L^\infty(0,\infty, L^2(\Omega)), \quad u(0) =u_0
$$
and for all $t>0$, we have $u(t)\in  D(\partial \bar\Phi_{0,0,\vfi})$. 
In addition for all $t>0$, we have
$$
\frac{d^+}{dt} u = \partial \bar\Phi_{0,0,\vfi}^o (u),
$$
where $\frac{d^+}{dt} u $ denotes the right derivative and $\partial \bar\Phi_{0,0,\vfi}^o(u)$ is the minimal section of the subdifferential. We recall that $v= \partial \bar\Phi_{0,0,\vfi}^o(u)$ if it has the smallest $L^2$ norm of all elements of $\partial \bar\Phi_{0,0,\vfi}(u)$.
\end{proposition}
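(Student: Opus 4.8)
The plan is to recognize this proposition as a direct instance of the nonlinear semigroup theory for subdifferentials of convex functionals on a Hilbert space, applied with $H=L^2(\Omega)$ and $A=\partial\bar\Phi_{0,0,\vfi}$; once the hypotheses of that theory are in place, both parts follow from its general statements, see \cite{brezis}. Thus the actual content is the verification that $\bar\Phi_{0,0,\vfi}\colon L^2(\Omega)\to(-\infty,+\infty]$ is proper, convex and lower semicontinuous, and that its effective domain is dense in $L^2(\Omega)$.

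First I would establish these structural properties. Properness is clear, since $\bar\Phi_{0,0,\vfi}(v)$ is finite for every $v\in BV(\Omega)$; indeed $D(\bar\Phi_{0,0,\vfi})=BV(\Omega)$. Convexity is immediate from (\ref{rel-rof}), because both $v\mapsto\int_\Omega|Dv|$ and $v\mapsto|\gamma v-\vfi|(0)+|\gamma v-\vfi|(L)$ are convex. Lower semicontinuity in the $L^1(\Omega)$ topology, and hence in $L^2(\Omega)$ along $BV$-bounded sequences, is exactly the result of \cite{soucek} already recalled in Section \ref{s2}; this is the one point that is not purely formal, and it is precisely the reason for working with the relaxed functional rather than $\Phi_{0,0,\vfi}$. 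Finally, since $C^\infty(\bar\Omega)\subset BV(\Omega)$ and $C^\infty(\bar\Omega)$ is dense in $L^2(\Omega)$, we get $\overline{D(\bar\Phi_{0,0,\vfi})}^{\,L^2}=L^2(\Omega)$, so that $\partial\bar\Phi_{0,0,\vfi}$ is a maximal monotone operator on $L^2(\Omega)$ with $\overline{D(\partial\bar\Phi_{0,0,\vfi})}^{\,L^2}=L^2(\Omega)$.

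With these facts, part (a) is the classical assertion that for $u_0\in D(\partial\bar\Phi_{0,0,\vfi})$ the unique strong solution of (\ref{r-dyn}) is Lipschitz in time with values in $L^2(\Omega)$; I would deduce this from the contraction estimate $\|u(t+h)-u(t)\|_{L^2}\le\|u(h)-u_0\|_{L^2}\le h\,\|\partial\bar\Phi_{0,0,\vfi}^o(u_0)\|_{L^2}$, which also gives uniqueness. For part (b), given arbitrary $u_0\in L^2(\Omega)=\overline{D(\bar\Phi_{0,0,\vfi})}$, I would approximate $u_0$ by elements of $D(\partial\bar\Phi_{0,0,\vfi})$, pass to the limit using the contractivity of the semigroup, and use the a priori bound $\frac{t^2}{2}\big\|\tfrac{du}{dt}(t)\big\|_{L^2}^2\le\int_0^t s\,\big\|\tfrac{du}{dt}(s)\big\|_{L^2}^2\,ds\le\bar\Phi_{0,0,\vfi}(u_0)-\inf\bar\Phi_{0,0,\vfi}$, valid because $t\mapsto\bar\Phi_{0,0,\vfi}(u(t))$ is convex and non-increasing. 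This yields $t\,\tfrac{du}{dt}\in L^\infty(0,\infty;L^2(\Omega))$, $u(t)\in D(\partial\bar\Phi_{0,0,\vfi})$ for every $t>0$, and $\frac{d^+}{dt}u(t)=\partial\bar\Phi_{0,0,\vfi}^o(u(t))$. The only step that requires more than bookkeeping is the lower semicontinuity used above, and that is supplied by \cite{soucek}; everything else is the standard Hilbert-space monotone-operator machinery of \cite{brezis}.
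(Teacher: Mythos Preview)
Your proposal is correct and takes essentially the same approach as the paper: both recognize the proposition as a direct instance of the semigroup theory for subdifferentials of proper convex lower semicontinuous functionals, with the only substantive ingredient being the lower semicontinuity of $\bar\Phi_{0,0,\vfi}$ supplied by \cite{soucek}. The paper's proof is simply a two-line citation of \cite[Th\'eor\`eme 3.1]{brezis} for part (a) and \cite[Th\'eor\`eme 3.2]{brezis} for part (b), whereas you additionally verify the hypotheses (properness, convexity, density of the domain) and sketch the internal mechanics of those theorems; this extra detail is sound but not required, since the paper treats the result as a black-box application.
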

\begin{proof}
Since $\bar \Phi_{0,0,\vfi}$ is convex and lower semicontinuous on $L^2(\Omega)$, then we may apply \cite[Th\'eor\`eme 3.1]{brezis} to deduce part (a). Part (b) follows from \cite[Th\'eor\`eme 3.2]{brezis}.
\end{proof}

We remark that the minimality of the selection of $\partial \bar\Phi_{0,0,\vfi}$ means that (\ref{r-dyn}) takes the following form,
$$
u_t = z_x,
$$
where $z(\cdot, t)\in W^{1,\infty}(\Omega)$, $|z(x,t)|\le 1$ for a.e. $(x,t)\in \Omega\times (0,T)$ and $z(\cdot, t)$ is a minimizer of the following functional
\begin{equation}\label{r-msec}
\int_\Omega \zeta_x^2\, dx, \quad \zeta\in \partial\bar \Phi_{0,0,\vfi}(u).
\end{equation}

This Proposition does not say anything about attainment of the boundary conditions. For this purpose we restate the condition (\ref{r2.3}). %the notice of the attainment in the viscosity sense.
Below we recall the notion introduced in \cite{mazon} and later for the one dimensional problems in \cite{milena}:  
\begin{definition}\label{d-bc}
We shall say that $u$ a minimizer of (\ref{r-min}) (resp. a  solution to (\ref{r-dyn})) satisfies the Dirichlet boundary condition 
$$
u = \vfi \qquad\hbox{on } \partial\Omega
$$
(resp. satisfies it at time $t$)
in the {\it viscosity sense} provided that 
$$
\gamma u(0) = \vfi(0) \quad \hbox{or}\quad\left\{ 
\begin{array}{ll}
 \hbox{if } \gamma u(0)>\vfi(0), & \hbox{then, } z(0) =1,\\
 \hbox{if } \gamma u(0)<\vfi(0), & \hbox{then, } z(0) =-1 
\end{array}
\right. 
$$
and
$$
\gamma u(L) = \vfi(L)  \quad \hbox{or}\quad \left\{ 
\begin{array}{ll}
 \hbox{if } \gamma u(L)>\vfi(L), & \hbox{then } z(0)=-1,\\
 \hbox{if } \gamma u(L)<\vfi(L), & \hbox{then } z(0) =1 .
\end{array}
\right. 
$$
In the case $u$ is a solutions to (\ref{r-dyn}), we replace $\gamma u(0)$ or $\gamma u(L)$ by $\gamma u(0,t)$ or $\gamma u(L,t)$, respectively for a.e. $t>0$.
\end{definition}
%We used this definition explicitly to discuss properties of solutions to (\ref{r-mi}) below in \cite{milena}.

It is natural to ask if solutions to (\ref{r-dyn})  satisfy the Dirichlet boundary conditions in this weak sense. In order to address this question we prefer to present a  different approach to (\ref{r-dyn}) based on approximation of this eq. by strictly parabolic problems with smooth coefficients developed in \cite{mury} and \cite{milena}, because we want to use methods established there.

Since we want to use results of \cite{milena} to analyse properties of solutions to (\ref{r-dyn}), then %. For this purpose 
we recall the language of that paper, which  was devoted to 
\begin{align}\label{r-mi}
 & u_t = \left( \frac d{dp} W(p)\right)_x,& (x,t) \in (0,L) \times (0,\infty),\nonumber\\
 & u(x,0) = u_0(x), & x \in (0,L),\\
 & u(0,t) = A, & u(L,t) = B,\nonumber
\end{align}
where  the boundary conditions were understood in the sense of Definition \ref{d-bc} and the emphasis was put on 
$W(p) = |p - 1| + |p+1|$. However, the  results of \cite{milena} are extendable to more general piecewise linear $W$. We write $\cL (p):=  \frac d{dp} W(p)$. Since $W$ is convex, then  $\cL $ is increasing with possible discontinuities, their number is finite, when $W$ is piecewise linear. Here we are mainly interested in  $W(p) = |p|$.

Existence of solutions to (\ref{r-mi}) was established by approximating  this equation by strictly parabolic problems with smooth coefficients, see \cite{milena} and \cite{mucha}. We also showed in \cite[Theorem 2.3, part (1)]{milena} that the solution we constructed is the solution provided by \cite{brezis}. Hence, by uniqueness provided by \cite[Th\'eor\`eme 3.2]{brezis} both solutions coincide. 
However, we do not expect that $u(t)$ satisfies the 
%Now, we can answer the question of the attainment of the 
boundary data in the trace sense for a.e. $t>0$.

%\begin{proposition}
 %If $u$ is the unique solution to (\ref{r-dyn}), then it satisfies  the boundary conditions in the sense of Definition \ref{d-bc}.
%\end{proposition}
%\begin{proof}
 %This is done in the course of proof of \cite[Theorem 2.3 part 1]{milena}. 
%\end{proof}

The object frequently used to describe properties of solutions to (\ref{r-dyn}) is a facet. We follow the definition in \cite[Definition 4.1]{milena}. It is recalled in the form suitable for $W(p) = |p|$.

\begin{definition} \label{2}{\rm Let us set $\cP$ be the set of singularities of $W$, in the present case $\cP= \{0\}$.

 (a) We shall say that a subset $F$ of the graph of a solution
to (\ref{r-mi}) is a {\it facet}, if 
$$
F \equiv F(\xi^-,\xi^+)=\{ (x,u(x)):\ u_x|_{[\xi^-,\xi^+]} \equiv p \in \cP
%u_x|_{[\xi^-,\xi^+]} \equiv -1
\}
$$
and it is maximal with this property. 

We write $u_x|_{[\xi^-,\xi^+]}$ with the understanding that the one-sided
derivatives of $u$ exist at $\xi^+$ and $\xi^-$.
Moreover, maximality means that if $[\xi^-,\xi^+]\subset J$, $J$ is an interval  and $u_x|_J\equiv p\in \cP$, then $[\xi^-,\xi^+] =J$. Interval  $[\xi^-,\xi^+]$ is called the
pre-image of facet $F$ or a faceted region of $u$. %(cf. Section \ref{lepkie}).

(b) Facet $F(\xi^-,\xi^+)$ has {\it zero curvature}, if: (i)
either $z(\cdot, t)$ %\Omega$ i.e. $\cL(u_x)$ 
has the same value at $\xi^-$ and $\xi^+$ or (ii) $\xi^- =0$ or $\xi^+ =L$ and $u(\cdot, t)$ satisfies the boundary condition at this point in the sense of Definition \ref{d-bc}, i.e. the facet touches the boundary of $\Omega$.} %the facet hits the boundary (in the case of Dirichlet boundary conditions).}
%or there is $\delta>0$ such that $u\left|_{(\xi^--\delta, \xi^-)}\right.$ and 
%$u\left|_{(\xi^+, \xi^+ +\delta)}\right.$ are both on different sides of line
%containing facet $F$.}
\end{definition}
\begin{remark}
 In \cite{milena} in the definition of the zero-curvature facet touching the boundary the condition that $u(\cdot, t)$  satisfies the boundary condition in the viscosity sense is missing.
\end{remark}

It will be important for us to discuss regularity of solutions on the overwhelming set of  time instances. We recall \cite[Definition 2.2]{milena}.

\begin{definition}
We shall say that $t>0$ is {\it typical} if
$$\int_\Omega\left|{u_t(x,t)}\right|^2dx<\infty\quad \hbox{\rm{and}}\quad \int_\Omega\left|{z_x %\Omega_x
(x,t)}\right|^2dx<\infty.$$
\end{definition}

We know that the gradient flow (\ref{r-dyn}) regularizes data in the sense that for all $t>0$ the solution $u(t)$ belongs to $D(\partial \bar \Phi_{0,0,\vfi})$. Actually, we may say a bit more. In \cite[Theorem 4.1]{milena} we showed:

\begin{proposition}\label{tmf}
 If $u_0\in BV(\Omega)$ and $u$ is the corresponding solution to (\ref{r-mi}), then for typical $t$, i.e.
for almost all $t>0$, the number of facets with non-zero curvature is
finite. 
%Moreover, we have the following formula
%$$
%\sum_{u_x=1}\frac{4}{L_i^2}L_i+\sum_{u_x=-1}\frac{4}{L_i^2}L_i <\infty,$$
%where $L_i$ is/denote the length of the facet $F(\xi_i^-.\xi^+)$. 
\end{proposition}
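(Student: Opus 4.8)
\medskip\noindent\textbf{Proof proposal.} The plan is to count the facets of $u(\cdot,t)$ with non-zero curvature by the amount each of them contributes to $\int_\Om|z_x(x,t)|^2\,dx$, a quantity that is finite for typical $t$.

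First I would record the energy bound. Since $u_0\in BV(\Om)$ we have $\bar\Phi_{0,0,\vfi}(u_0)<\infty$, and the gradient-flow structure (\ref{r-dyn}) yields, for a.e.\ $t>0$, the dissipation identity $\tfrac{d}{dt}\bar\Phi_{0,0,\vfi}(u(t))=-\|u_t(t)\|_{L^2}^2$, where $u_t=z_x$ and $z(\cdot,t)$ is the minimal section of $\partial\bar\Phi_{0,0,\vfi}(u(\cdot,t))$, cf.\ (\ref{r-msec}). Integrating in time and using that $\bar\Phi_{0,0,\vfi}\ge0$ is nonincreasing along the flow,
$$
\int_0^\infty\!\!\int_\Om|z_x(x,t)|^2\,dx\,dt\ \le\ \bar\Phi_{0,0,\vfi}(u_0)\ <\ \infty .
$$
Hence $\int_\Om|z_x(x,t)|^2\,dx<\infty$ for a.e.\ $t>0$; since this is precisely part of the defining requirement that $t$ be typical, it holds at every typical $t$. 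From now on I fix such a $t$ and suppress it from the notation, writing $u$ and $z$.

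Next I would estimate the contribution of a single facet $F=F(\xi^-,\xi^+)$, so that $u\equiv c$ and $Du=0$ on $[\xi^-,\xi^+]$. From $\|z\|_{L^\infty}\le1$ and $(z,Du)=|Du|$ as measures (see \cite{mazon}, and cf.\ Proposition \ref{p2.1}) the measure $|Du|$ vanishes on the open set $\{|z|<1\}$. Consequently, if $\xi^-\ne0$ then necessarily $|z(\xi^-)|=1$: were $|z(\xi^-)|<1$, then $|z|<1$ on a left neighbourhood of $\xi^-$, which would force $u$ to be locally constant there and, because a jump of $u$ at $\xi^-$ would be an atom of $Du$ and would itself force $|z(\xi^-)|=1$, would force $u\equiv c$ on a neighbourhood of $\xi^-$ --- contradicting the maximality of $F$; similarly $|z(\xi^+)|=1$ when $\xi^+\ne L$. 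So an \textit{interior} facet with non-zero curvature (Definition \ref{2}) has $z(\xi^-),z(\xi^+)\in\{-1,1\}$ with $z(\xi^-)\ne z(\xi^+)$, whence $\bigl(z(\xi^+)-z(\xi^-)\bigr)^2=4$; since $z\in W^{1,\infty}(\Om)$ is absolutely continuous and $0<\xi^+-\xi^-\le L$, the Cauchy--Schwarz inequality gives
$$
\int_F|z_x|^2\,dx\ \ge\ \frac1{\xi^+-\xi^-}\Bigl(\int_F z_x\,dx\Bigr)^2\ =\ \frac{\bigl(z(\xi^+)-z(\xi^-)\bigr)^2}{\xi^+-\xi^-}\ \ge\ \frac4L .
$$

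Finally I would assemble the count. Distinct facets have pairwise disjoint interiors, so summing the last inequality over all interior facets with non-zero curvature and comparing with the first step,
$$
\frac4L\,\#\bigl\{\text{interior facets with non-zero curvature}\bigr\}\ \le\ \sum_F\int_F|z_x|^2\,dx\ \le\ \int_\Om|z_x(x,t)|^2\,dx\ <\ \infty ,
$$
so there are only finitely many of them; adding the at most two facets that meet $\partial\Om$ (the maximal flat interval containing $0$ and the one containing $L$) finishes the proof. The step I expect to be the real obstacle is the structural claim $|z|=1$ at the endpoints of an interior facet: making it precise requires care with the $BV$ technicalities behind Definition \ref{2} --- existence of the relevant one-sided derivatives, and the fact that distinct facets are genuinely separated --- together with the description of $\partial\bar\Phi_{0,0,\vfi}$ of \cite{mazon} and, where convenient, the strictly parabolic approximation of (\ref{r-dyn}) from \cite{mury} and \cite{milena}. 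The energy bound and the counting are then routine.
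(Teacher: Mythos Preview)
The paper does not prove this proposition at all: it is quoted verbatim as \cite[Theorem 4.1]{milena}, so there is no in-paper argument to compare against. Your proposal is therefore a genuine self-contained proof where the paper offers only a citation.

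Your argument is correct and rather elegant. The energy--dissipation bound $\int_0^\infty\!\int_\Om|z_x|^2\,dx\,dt\le\bar\Phi_{0,0,\vfi}(u_0)<\infty$ is standard for gradient flows of convex lower-semicontinuous functionals with $u_0$ in the domain, and yields finiteness of $\int_\Om|z_x|^2\,dx$ for a.e.\ $t$. The structural step you flag as the obstacle --- that $|z(\xi^\pm)|=1$ at an interior facet endpoint --- does follow from $(z,Du)=|Du|$ exactly as you sketch: this identity forces $|Du|$ to vanish on the open set $\{|z|<1\}$, so $|z(\xi^-)|<1$ would make $u$ constant on a full neighbourhood of $\xi^-$ (no jump is possible there for the same reason), contradicting maximality. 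The Cauchy--Schwarz count is then immediate.

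One small point to make explicit: your division by $\xi^+-\xi^-$ silently assumes the facet has positive length. This is harmless, because a zero-length \emph{interior} facet has $\xi^-=\xi^+$ and hence $z(\xi^-)=z(\xi^+)$ by continuity of $z\in W^{1,\infty}$, so by Definition~\ref{2}(b)(i) it has zero curvature and is not being counted. Zero-length facets can occur only at the boundary (cf.\ the discussion around (\ref{tzal})), and those are already absorbed into your ``at most two boundary facets'' remark.
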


We will also need a comparison principle. In order to simplify the exposition we will recall it only for (possibly discontinuous) solutions to (\ref{r-mi}). 

\begin{proposition}(\cite[Theorem 4.1]{miyory})\label{comprin}
Let $u$ and $v$ be  solutions of (\ref{r-mi}) 
in $\Omega_T=\Omega \times(0,T)$.
If $u^{*}\leq v_{*}$ on the parabolic boundary 
$\partial_{p}\Omega_T(=[0,T)\times\partial \Omega \cup \{ 0\}\times \overline\Omega)$ of $\Omega_T$, 
then $u^{*} \leq v_{*}$ in $\Omega_T$.
\end{proposition}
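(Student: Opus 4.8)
The plan is to derive the Comparison Principle from the regularization of (\ref{r-mi}) by uniformly parabolic quasilinear equations, for which the classical parabolic maximum principle applies, and then to pass to the limit. Recall from \cite{milena}, \cite{mucha} that the solutions $u$ and $v$ of (\ref{r-mi}) are constructed exactly this way: there are smooth, strictly increasing functions $\cL_\eps$ with $\cL_\eps \to \cL$, and classical solutions $u^\eps$, $v^\eps$ of
$$
w_t = (\cL_\eps(w_x))_x \quad \hbox{in } \Omega_T ,
$$
with smooth initial data $u_0^\eps \to u_0$, $v_0^\eps \to v_0$ in $L^1(\Omega)$ and with the prescribed Dirichlet data on $\partial\Omega$, such that $u^\eps \to u$, $v^\eps \to v$ in $C([0,T];L^1(\Omega))$ and a.e.\ in $\Omega_T$, the limits satisfying the boundary data in the viscosity sense of Definition \ref{d-bc}.

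First I would turn the hypothesis $u^* \le v_*$ on $\partial_p\Omega_T$ into an ordering of the regularized data. On the initial slice $\{0\}\times\overline\Omega$ it reads $u_0 \le v_0$ a.e., and the mollifications may be chosen so that $u_0^\eps \le v_0^\eps$. On the lateral part $[0,T)\times\partial\Omega$ the argument is more delicate, because the viscosity boundary condition allows the trace of a solution to differ from $A$ or $B$; here one uses $u^* \le v_*$ together with the sign conditions on $z$ from Definition \ref{d-bc} to show that the comparison survives the boundary layers of $u^\eps$ and $v^\eps$, if necessary after lowering the Dirichlet data of $u$ by a small $\delta>0$ and raising that of $v$ by $\delta$. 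With these choices one has $u^\eps \le v^\eps$ on the whole parabolic boundary.

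Since the regularized equations are uniformly parabolic with smooth coefficients, the classical comparison principle gives $u^\eps \le v^\eps$ in all of $\Omega_T$. Letting $\eps\to 0$ and using a.e.\ convergence yields $u \le v$ a.e.\ in $\Omega_T$; the $\delta$-perturbation of the boundary data is then removed via the continuous dependence of solutions to (\ref{r-mi}) on their data. Finally, to pass from $u \le v$ a.e.\ to $u^* \le v_*$ everywhere I would exploit that in one space dimension solutions admit one-sided limits at every point, so that the envelopes $u^*$ and $v_*$ are determined by the a.e.\ values of $u$ and $v$; the pointwise inequality between the envelopes then follows by semicontinuity.

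I expect the main obstacle to be the lateral boundary in the second step: matching the viscosity-sense boundary conditions of $u$ and $v$, which need not themselves be comparable as Dirichlet data, with the genuine Dirichlet data of the regularized problems, and checking that the ordering $u^* \le v_*$ on $[0,T)\times\partial\Omega$ is not destroyed in the limit. A secondary difficulty, caused by the possible discontinuity of solutions, is the careful handling of $u^*$ and $v_*$ near the boundary and near jump points, where the one-dimensional structure — existence of one-sided traces and finiteness of the relevant facet structure from Proposition \ref{tmf} — is what makes the bookkeeping feasible. An alternative to the whole scheme would be a direct argument: consider the first time at which $u^* > v_*$ somewhere and derive a contradiction from the facet structure of the evolution, but this seems to require essentially the same technical input.
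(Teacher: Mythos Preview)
The paper does not supply a proof of this proposition at all: it is quoted verbatim as \cite[Theorem 4.1]{miyory}, and the only comment the paper adds is that the solutions constructed in \cite{milena} are viscosity solutions in the sense of \cite{miyory}, so the cited theorem applies to them. The proof in \cite{miyory} is carried out entirely within the theory of viscosity solutions for singular diffusion equations (a doubling-of-variables argument adapted to the faceted structure), not by parabolic regularization.

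Your regularization route is therefore genuinely different from what the paper invokes. For the restricted class of solutions actually used later --- those obtained in \cite{milena} as limits of uniformly parabolic approximations sharing the \emph{same} Dirichlet data $A,B$ --- your Step~1 and Step~3 are sound: with $u_0^\eps\le v_0^\eps$ and identical lateral data the classical maximum principle gives $u^\eps\le v^\eps$, and passage to the limit is routine. What your outline does not do is reproduce the statement as written. The hypothesis $u^{*}\le v_{*}$ on the lateral boundary constrains the \emph{interior traces} of $u$ and $v$, not the prescribed values $A,B$; since in your scheme $u^\eps$ and $v^\eps$ both equal $A$ (resp.\ $B$) on $\partial\Omega$ regardless of that hypothesis, the lateral assumption never enters your argument, and the $\delta$-shift you propose has no clear target. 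In effect you would be proving a different (and for the paper's purposes sufficient) statement: if $u_0\le v_0$ and the Dirichlet data coincide, then $u\le v$. That is weaker in formulation than Proposition~\ref{comprin} but covers the only application made of it (Lemma~\ref{l3.9}). A second limitation is scope: your argument reaches only solutions arising as regularization limits, whereas the cited theorem in \cite{miyory} is stated for arbitrary viscosity sub- and supersolutions; bridging that gap requires precisely the identification step \cite[Theorem 5.2]{milena} together with uniqueness of viscosity solutions, which is essentially the content of the result you set out to prove.
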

In this statement 
$u^{*}$ is defined by
$$
u^{*}(x,t)=\lim_{\varepsilon \downarrow 0} \ \sup \{ u(y,s) : \ |s-t|<\varepsilon, \
|x-y|<\varepsilon, \ (y,s)\in \Omega_T \}\quad\hbox{for }(x,t)\in\overline{\Omega_T}.
$$
We also set $u_{*}=(-u^{*})$.

We proved in \cite[Theorem 5.2]{milena} that the solutions we constructed in \cite{milena} are viscosity solutions in the sense of \cite{miyory}. Let us stress that the initial condition is assumed to be in $BV(\Omega)$.

%\subsection{The terminal states of the dynamical problem (\ref{r-dyn})}

In \cite{milena} we studied the extinction time for (\ref{r-mi}).
We showed there a finite extinction time, see \cite[Theorem 4.11]{milena}. Due to the similarity of (\ref{r-mi}) to (\ref{r-dyn}), 
we claim that all solutions to (\ref{r-dyn}) get extinguished in finite time.

\begin{proposition}\label{t-ext}
Let us suppose that $f\in L^2(\Omega)$. Then, there is $0\le T_{ext}<\infty$ such that for all $t>T_{ext}$ we have $u(t) = u(T_{ext}).$ 
\end{proposition}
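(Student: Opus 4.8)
The plan is to reduce finite extinction for (\ref{r-dyn}) to the finite extinction time for (\ref{r-mi}) with $W(p)=|p|$, which is \cite[Theorem 4.11]{milena}, and then to remove the regularity assumption on the initial datum by exploiting the instantaneous smoothing of the flow.

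\emph{Step 1: the case $f\in BV(\Omega)$.} As recalled just before Definition \ref{2}, for $W(p)=|p|$ (hence $\cL(p)=\sgn p$) the solution of (\ref{r-mi}) built in \cite{milena} coincides, by \cite[Theorem 2.3, part (1)]{milena} together with the uniqueness of \cite[Th\'eor\`eme 3.2]{brezis}, with the solution of (\ref{r-dyn}) starting from $f$. Thus \cite[Theorem 4.11]{milena} gives $0\le T_{ext}(f)<\infty$ with $u(t)=u(T_{ext}(f))$ for all $t\ge T_{ext}(f)$. If one wishes to reprove this directly: the terminal state $u_T$ minimizes $\Psi=\bar\Phi_{0,0,\vfi}$, so for the minimal section $v(t)\in\partial\bar\Phi_{0,0,\vfi}(u(t))$ the subdifferential inequality yields $\langle v(t),u(t)-u_T\rangle\ge\bar\Phi_{0,0,\vfi}(u(t))-\bar\Phi_{0,0,\vfi}(u_T)$; coupling this with a one-dimensional Poincar\'e-type bound asserting that $\bar\Phi_{0,0,\vfi}(w)$ exceeds its minimum by at least a fixed multiple of the $L^2$-distance from $w$ to the set of minimizers of $\bar\Phi_{0,0,\vfi}$ — the functional has linear, not quadratic, growth off that set, which is exactly what produces a finite rather than merely asymptotic decay — and with $\tfrac12\tfrac{d}{dt}\|u(t)-u_T\|_{L^2}^2=-\langle v(t),u(t)-u_T\rangle$, one obtains $\tfrac{d}{dt}\|u(t)-u_T\|_{L^2}\le -c$ whenever $u(t)\ne u_T$, so $u$ reaches $u_T$ by time $c^{-1}\|f-u_T\|_{L^2}$.

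\emph{Step 2: general $f\in L^2(\Omega)$.} Pick any $t_0>0$. The regularizing property of (\ref{r-dyn}) recalled above gives $u(t_0)\in D(\partial\bar\Phi_{0,0,\vfi})$, and since $\partial\bar\Phi_{0,0,\vfi}(w)\ne\emptyset$ forces $\bar\Phi_{0,0,\vfi}(w)<\infty$, in fact $u(t_0)\in BV(\Omega)$. Because (\ref{r-dyn}) is autonomous with unique solutions, $s\mapsto u(t_0+s)$ is the solution of (\ref{r-dyn}) with initial datum $u(t_0)$; applying Step 1 to it furnishes a finite $S$ with $u(t_0+s)=u(t_0+S)$ for all $s\ge S$. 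Hence $u(t)=u(t_0+S)$ for all $t\ge t_0+S$, and by $L^2$-continuity of $t\mapsto u(t)$ the number $T_{ext}:=\inf\{t\ge 0:\ u(s)=u(t)\ \text{for all}\ s\ge t\}$ is well defined, finite (indeed $\le t_0+S$), and satisfies $u(t)=u(T_{ext})$ for all $t\ge T_{ext}$.

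I expect the only genuinely delicate point to be the transfer in Step 1 from the model case $W(p)=|p-1|+|p+1|$ treated in \cite{milena} to $W(p)=|p|$: one must verify that the extinction argument there uses nothing beyond convexity, piecewise linearity of $W$, and the one-dimensional Poincar\'e/Cheeger-type lower bound, all of which are available here (the paper already asserts the \cite{milena} theory extends to general piecewise linear $W$, so this is mostly a matter of bookkeeping). Everything else — the smoothing $L^2(\Omega)\to BV(\Omega)$ in positive time, the time-translation and uniqueness that let one restart the flow at $u(t_0)$, and the continuity of the trajectory — is routine within the monotone-operator framework of \cite{brezis} already invoked above.
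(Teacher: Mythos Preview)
Your proposal is correct and follows essentially the same route as the paper: the paper's own proof is just a citation to \cite[Theorem 4.11]{milena} and \cite[Remark 4.12]{milena} for (\ref{r-mi}), declaring the remaining adjustments ``trivial,'' while you spell out those adjustments explicitly (the regularization step $L^2\to BV$ in your Step~2, and the bookkeeping for passing from $W(p)=|p-1|+|p+1|$ to $W(p)=|p|$). The alternative direct argument you sketch in Step~1 via a Poincar\'e/Cheeger-type lower bound is extra material the paper does not attempt, but it does no harm and is in the right spirit.
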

\begin{proof} 
This fact was shown in \cite[Theorem 4.11]{milena} and \cite[Remark 4.12]{milena} for (\ref{r-mi}). The adjustments to the present case are trivial and they are omitted. 
\end{proof}

Let us write $u_T$ for $u(T_{ext})$. Since this is a terminal state, then we see that $\frac{\partial u_T}{\partial t}=0$, i.e.
\begin{equation}\label{0diff}
 0 \in \partial\bar\Phi_{0,0,\vfi}.
\end{equation}
This means that  $u_T$ is a critical point and in fact a minimizer of 
$$
\bar\Phi_{0,0,\vfi} (u_T) = \min\{ v\in BV(\Omega):\ \bar\Phi_{0,0,\vfi}(v)\}.
$$

%A separate question is the identification of the subdifferential of $\Phi_D$. We have:
%\begin{proposition}
%If $\bar\Phi_D$, then
%$$
%\partial\bar\Phi_D(u) = \{ \}
%$$
%\end{proposition}
%\begin{proof}
%this is a mixture of 
%\end{proof}

This characterization is particularly important, because any terminal state of (\ref{r-dyn}) satisfies the inclusion (\ref{0diff}).

We need to discover properties of the terminal states. Our tools of their analysis require that we deal with initial conditions, which correspond to typical times of evolution, i.e
\begin{equation}\label{tzal}
 f\in BV(\Omega) \hbox{ has a finite number of facets with non-zero curvature, they all have positive length. }
\end{equation}
In other words $f$ has a finite number of (maximal) intervals where is it monotone. Moreover, $f$ may not strict minima nor maxima.

We begin with a simple observation regarding (\ref{tzal}).
\begin{lemma}\label{l3.9}
If $f$ satisfies (\ref{tzal}), then for all $x\in \Omega$ we have
$$
u_T(x) \in [\min \vfi, \max\vfi].
$$
\end{lemma}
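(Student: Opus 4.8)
The plan is to use the comparison principle (Proposition \ref{comprin}) applied to the evolution problem (\ref{r-dyn}) (equivalently (\ref{r-mi}) with $W(p)=|p|$, $A=\vfi(0)$, $B=\vfi(L)$), comparing the solution $u(t)$ starting from $f$ with suitable constant super- and sub-solutions, and then passing to the terminal state $u_T = u(T_{ext})$. Set $M = \max\vfi = \max\{\vfi(0),\vfi(L)\}$ and $m = \min\vfi = \min\{\vfi(0),\vfi(L)\}$. First I would observe that constants are stationary solutions of the total variation flow with Dirichlet data in the viscosity sense: if $c \ge M$ then the constant function $v\equiv c$ has $z \equiv$ any admissible field (one can take $z\equiv$ a constant of modulus $\le 1$, e.g. $z\equiv 0$, or adjust at the endpoints), and since $c = v \ge \vfi(0)$, $c = v \ge \vfi(L)$, the boundary condition of Definition \ref{d-bc} is met by choosing $z(0)=1$, $z(L)=-1$ where strict inequality holds, or trivially where equality holds; the same field is divergence-free so $v_t = z_x = 0$. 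Hence $v\equiv c$ is a (viscosity) solution of (\ref{r-mi}) for every $c\ge M$, and symmetrically $v\equiv c$ is a solution for every $c\le m$.

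Next I would apply Proposition \ref{comprin} on $\Omega_T = \Omega\times(0,T)$ for arbitrary $T>T_{ext}$. Take $v\equiv M$ as the super-solution. On the parabolic boundary $\partial_p\Omega_T$ we need $f^* \le M$ on $\{0\}\times\overline\Omega$ and $u^*(0,t), u^*(L,t)\le M$ for $t\in[0,T)$. The first requires $\mathrm{ess\,sup}\, f \le M$, which is \emph{not} assumed. To handle this I would instead run the comparison only after an initial time, or truncate: more cleanly, note that for any $t_0>0$ the solution $u(t_0)$ has a finite number of non-zero-curvature facets (Proposition \ref{tmf}), and one shows $u(\cdot,t)$ satisfies the boundary condition in the viscosity sense for a.e. $t$, but the trace can exceed $M$. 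The honest route is: by hypothesis (\ref{tzal}), $f\in BV(\Omega)$ with a finite number of monotonicity intervals, so $u(\cdot,t)$ is continuous for typical $t$ and the first/last facet adjacent to the boundary carries the flow; one argues that as $t\uparrow T_{ext}$ the boundary-adjacent facets must have reached values in $[m,M]$, because otherwise $z_x$ would be signed near the boundary forcing further evolution, contradicting stationarity. Concretely, for the terminal state $u_T$: (\ref{0diff}) gives $0\in\partial\bar\Phi_{0,0,\vfi}(u_T)$, so there is $z$ with $\|z\|_\infty\le 1$, $z_x = 0$ (so $z\equiv$ const), $(z,Du_T)=|Du_T|$, and $-z(0)\in\sgn(\vfi(0)-\gamma u_T(0))$, $z(L)\in\sgn(\vfi(L)-\gamma u_T(L))$. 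Since $z$ is a constant of modulus $\le 1$ and $(z,Du_T)=|Du_T|$, $u_T$ is monotone with sign equal to $\sgn z$ (or constant if $z\in(-1,1)$ strictly or $Du_T=0$); combined with the two endpoint sign conditions this pins $\gamma u_T(0)$ and $\gamma u_T(L)$ into $[m,M]$, and monotonicity then forces $u_T(x)\in[m,M]$ for all $x$.

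I would therefore structure the argument around this stationarity characterization rather than the full dynamic comparison: from $0\in\partial\bar\Phi_{0,0,\vfi}(u_T)$ extract the constant field $z\equiv k$, $|k|\le 1$. If $|k|<1$ then $(z,Du_T)=|Du_T|$ forces $Du_T=0$, so $u_T$ is constant; the endpoint conditions $-k\in\sgn(\vfi(0)-u_T)$, $k\in\sgn(\vfi(L)-u_T)$ are only compatible (for $|k|<1$, which requires $u_T=\vfi(0)$ and $u_T=\vfi(L)$ unless the sign sets are all of $[-1,1]$) with $u_T\in\{\vfi(0),\vfi(L)\}\subset[m,M]$, or with $\vfi(0)=\vfi(L)$. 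If $k=1$: then $u_T$ is nondecreasing, $-1=-k\in\sgn(\vfi(0)-\gamma u_T(0))$ forces $\gamma u_T(0)\ge\vfi(0)$, i.e. $\gamma u_T(0)\ge$ something $\ge m$ — wait, I need the other direction too, so the monotonicity is the key: $k=1\in\sgn(\vfi(L)-\gamma u_T(L))$ forces $\gamma u_T(L)\le\vfi(L)\le M$, and $\gamma u_T(0)\ge\vfi(0)\ge m$; since $u_T$ is nondecreasing, $m\le\vfi(0)\le\gamma u_T(0)\le\gamma u_T(L)\le\vfi(L)\le M$, giving $u_T(x)\in[m,M]$ for all $x$. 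The case $k=-1$ is symmetric with $u_T$ nonincreasing. This covers all cases.

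\textbf{Main obstacle.} The delicate point is the case $|k|<1$, i.e. ruling out (or correctly characterizing) constant terminal states whose value escapes $[m,M]$: one must verify that the multivalued sign conditions (\ref{r2.3}) genuinely prevent $u_T\equiv c$ with $c>M$ or $c<m$. For $c>M=\max\vfi$ we have $\vfi(0)-c<0$ and $\vfi(L)-c<0$, so $\sgn(\vfi(0)-c)=\{-1\}$ and $\sgn(\vfi(L)-c)=\{-1\}$; the conditions become $-z(0)=-1$, i.e. $z(0)=1$, and $z(L)=-1$; but $z$ is constant, so $z(0)=z(L)$, contradiction. Hence no such $c$ exists, and symmetrically none below $m$. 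This closes the gap, and I expect verifying exactly this endpoint-sign incompatibility — together with the subtlety that (\ref{tzal}) is what guarantees $u_T$ is a genuine (continuous, finitely-faceted) BV function to which Proposition \ref{p2.1}'s characterization applies cleanly — to be where the real care is needed.
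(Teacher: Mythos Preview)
Your final argument---extracting from $0\in\partial\bar\Phi_{0,0,\vfi}(u_T)$ a constant field $z\equiv k$ via Proposition~\ref{p2.1} (since $z_x=0$ when $\lambda=0$), then splitting on $k=1$, $k=-1$, $|k|<1$---is correct and complete. The case analysis you carry out (including the ``main obstacle'' paragraph ruling out constant $u_T\equiv c$ with $c\notin[m,M]$ by the incompatibility $z(0)=1$, $z(L)=-1$ for a constant $z$) closes every case. The rambling first half, where you attempt and abandon a static-barrier comparison, is unnecessary and should be cut; the stationarity argument stands on its own.

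Your route is genuinely different from the paper's and in fact simpler and more general. The paper argues dynamically: it builds spatially constant but time-dependent barriers $v(t)$, $w(t)$ that flow down to $\max\vfi$ and up to $\min\vfi$, and applies the comparison principle (Proposition~\ref{comprin}) after a delicate verification on the parabolic boundary, which in turn uses Lemma~\ref{l310} and hence hypothesis~(\ref{tzal}). Your argument bypasses the dynamics entirely and reads the conclusion straight off the Euler--Lagrange characterization of the terminal state; as a consequence it does not use~(\ref{tzal}) at all, so your remark that~(\ref{tzal}) is needed for Proposition~\ref{p2.1} to ``apply cleanly'' is mistaken---the proposition applies to any $u$ with $0\in\partial\bar\Phi_{0,0,\vfi}(u)$. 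What the paper's dynamic approach buys is information about the trajectory $u(\cdot,t)$ for all $t$, which feeds into the later boundary-layer results (Lemmas~\ref{l311}--\ref{l312}, Theorem~\ref{zp}); your approach gives the lemma itself more cheaply but says nothing about intermediate times.
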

\begin{proof}
We will use the Comparison Principle, Proposition \ref{comprin}, to establish our claim. We will compare $u$ with solutions to (\ref{r-dyn}) which are constant in space. 
We define $v_0 = \max\{ \max\vfi, \max f\}$, (resp.  $w_0 = \min\{ \min\vfi, \min f\}$). We are going to construct a solution $v$ (resp. $w$) to (\ref{r-dyn}) with initial condition $v_0$ (resp. $w_0$). We expect that $v$ and $w$  are constant in space. In this case,   the eq. (\ref{r-dyn}) takes the 
%We consider the 
following form %problem
\begin{equation}\label{r-h}
\frac {dv}{dt} = \frac{z^v(L,t)- z^v(0,t)}{L},\quad v(0) = v_0 \qquad (\hbox{resp. }\frac {dw}{dt} = \frac{z^w(L,t)- z^w(0,t)}{L},\quad w(0) = w_0),
\end{equation}
where $z^v, z^w\in W^{1,\infty}(\Omega)$ and $z^v_x(x,t), z^w_x(x,t)\in [-1,1]$.

In order to define  numbers $z^i(L,t)$ and $z^i(0,t)$, $i=v, w$, we need another auxiliary object. For any  $g\in BV(\Omega)$ we set 
%Let us set $\tilde v: \Omega\times\bR_+\to \bR$, (resp.  $\tilde w: \Omega\times\bR_+\to \bR$), 
\begin{equation}\label{r-ty}
 \tilde g(x) = \left\{ 
\begin{array}{ll}
\vfi(0) & x =0,\\
g(x) & x\in \Omega,\\
\vfi(L) & x=L.
\end{array}
\right.
\end{equation}
 %\qquad \left(\hbox{resp. }
%\tilde w(x,t) = \left\{ 
%\begin{array}{ll}
%\vfi(0) & x =0,\\
%w(t) & x\in \Omega,\\
%\vfi(L) & x=L,
%\end{array}
%\right. \right).
Having this operation we introduce $\tilde v_0$ and $\tilde w_0$ and  we define $z^i(L,t)$ and $z^i(0,t)$, $i=v, w$, as follows:\\
(1) If for $t \ge 0$ function $[0,L]\ni x\mapsto \tilde v(x,t)$ is not monotone, then 
$z^v(0,t) =1$ and $z^v(L, t)=-1$.\\
(2) If  for $t \ge 0$ function $[0,L]\ni x\mapsto \tilde v(x,t)$ is increasing, then $z^v(0,t) = 1 = z^v(L,t)$.\\
(3) If  for $t \ge 0$ function $[0,L]\ni x\mapsto \tilde v(x,t)$ is decreasing, then $z^v(0,t) = -1 = z^v(L,t)$. 

In case we wish to determine these quantities for $w$ we set:\\
(1) If for $t \ge 0$ function $[0,L]\ni x\mapsto \tilde w(x,t)$ is not monotone, then 
$z^w(0,t) =-1$ and $z^w(L, t)=1$.\\
(2) If  for $t \ge 0$ function $[0,L]\ni x\mapsto \tilde w(x,t)$ is increasing, then $z^w(0,t) = 1 = z^w(L,t)$.\\
(3) If  for $t \ge 0$ function $[0,L]\ni x\mapsto \tilde w(x,t)$ is decreasing, then $z^w(0,t) = -1 = z^w(L,t)$. 

These definitions are ambiguous, when $\tilde v(\cdot, t)$ (resp.  $\tilde w(\cdot, t)$) is constant. Nonetheless, we choose a measurable $z^v$ (resp. $z^w$). However, the difference $z^v(L,t) - z^v(0,t)$ (resp. $z^w(L,t) - z^w(0,t)$) will be zero, so the right-land-side of (\ref{r-h}) is well-defined.

Once the boundary values of $z^i$, $i= v, w$ are set we can define $z^i(\cdot, t)\in W^{1,\infty}(\Omega)$ to be the linear function with the values specified at $x=0, L$ by the above  formula. Hence, $z^i(\cdot, t)$, $i=v, w$ minimizes the integral in (\ref{r-msec}) among $W^{1,2}(\Omega)$ functions with the same boundary values as $z(\cdot, t)$.

We notice that $\tilde v(t)$ is monotone if and only if 
$$
v(t)\in [\min \vfi, \max \vfi].
$$
Hence, our definition of $v$ and $z$ implies that if $\tilde v_0$ is not monotone, then $\frac{dv}{dt}(t)<0$ for $t\in (0, T^u)$, where
$$
T^u = \frac{v_0 - \max \vfi}{z^v(L,0) - z^v(0,0)} L.
$$
Moreover, $v(T^u) = \max\vfi$. If we set $v(t):=  \max\vfi$ and $z^v(\cdot,t) \equiv 1$ for $t\ge T^u$, then $v$ is 
a unique solution to (\ref{r-dyn}) with initial condition $v_0$, because $z^v_x \in \partial \Phi_{0,0,\vfi}(v(t))$. 

If $\tilde v_0$ is monotone, then $v(t) := v_0$, $z^v \equiv 1$ for all $t\ge 0$. We also set $T^u =0$.
 
The same argument applies to $w$ yielding $T_l\ge 0$, 
where 
$$
T_l = \frac{\min \vfi - w_0}{z^w(L,0) - z^w(0,0)} L.
$$ 
Moreover, $\tilde w(t)$ is monotone for $t\ge T_l$. Since  facets of $u$ move faster than single facets of $v$ or $w$ we deduce that 
$$
T_{ext} \le T^*:=\max \{T^u, T_l\}.
$$
We wish to apply the Comparison Principle, for this purpose we have to check that 
$$
w(t) = w^*(x,t) \le u_*(x,t)\quad\hbox{and} \quad u^*(x,t) \le v_*(x,t) = v(t)
$$
on the parabolic boundary of $\Omega \times (0,T)$.
 
Since $x=0$ and $x=L$ can be treated in the same way we will consider only $x=0$. Since $f$ satisfies (\ref{tzal}) the same is true about $u(\cdot, t)$ due to Lemma \ref{l310}. Thus,$u(\cdot, t)$ has only a finite number of non-zero curvature facets and all of them have positive length. Let us denote by $[\xi^-_i, \xi^+_i]$, $i=1,\cdots, N$ the preimages of these facets, where 
$$\xi^+_i< \xi^-_{i+1}, \qquad i=1,\dots N-1.
$$
We face two possibilities:\\
(i) $\xi^-_1>0$;\\
(ii) $\xi^-_1=0$.\\
In case (i) $u(\cdot, t)$ is not only monotone  over the interval $(0, \xi^-_1)$, but also $\tilde u(\cdot, t)$ is monotone  over the interval $[0, \xi^-_1)$. Indeed, the lack of monotonicity of $\tilde u(\cdot, t)$ would mean that the singleton $\{0\}$ is the preimage of a zero-length, non-zero curvature facet that is excluded by our assumption (\ref{tzal}). Thus, $z^u(x,t) = 1$ on $[0,\xi^-_1]$ and as a result $u_t(x,t) =0$ on this interval. 

At the same time, $-\frac 2L\le\frac{dv}{dt} \le 0$ and monotonicity of $\tilde u(\cdot, t)$ means that $v(t) > u^*(0,t) = \gamma u(0,t)$. Hence, 
\begin{equation}\label{r-nier}
 v(\tau) \ge \gamma u(0,\tau),\qquad\hbox{for all }t\in [t, t+\delta),\quad \delta>0.
\end{equation}

In case (ii) the facet touches the boundary, we have two  more subcases: \\
(a) $\vfi(0) = \gamma u(0,t)$,\\ 
(b) $\vfi(0) \neq \gamma u(0,t)$.
In the first case the facet has zero curvature and its velocity is zero. We are back to case (i) above when $v(t)> \gamma u(0,t)$. If we have equality $v(t)= \gamma u(0,t)$, then $[0,L)$ is the preimage of the single facet of $v$ that has zero curvature, hence $\frac{dv}{dt}=0$ for all $\tau\ge t$ and  (\ref{r-nier}) follows. 

When (b) takes place, then it is obvious that  (\ref{r-nier}) holds when $\frac{dv}{dt} u_t<0$ on $(0,\xi^-_1)$. In case  $\frac{dv}{dt} u_t>0$ the speeds satisfy $|u_t| > |\frac{dv}{dt}|$, hence  (\ref{r-nier}) follows again.

If we combine the above observations we conclude that 
$$\gamma u(0, t) = u^*(0,t) \le v(t)$$
for all $t\ge$. Hence we apply the Comparison Principle, Proposition \ref{comprin}. We conclude that for all times $t\in [0,T_{ext}]$ we have
$$
 w(t) = w^*(x,t) \le u_*(x,t)\le u(x,t)\le u^*(x,t) \le v_*(x,t) = v(t).
$$
Hence, 
$$
w^*(x,T^*) \le u(x,T^*) \le v^*(x,T^*).
$$ 
Since $T^*\ge T_{ext}$ and $u_T(x) = u(x,T^*)$ we reach our conclusion,
$$
\min \vfi \le u_T(x) \le \max \vfi.
$$

We noticed that $v(T^*)$ and $w(T^*)$ %evolve until $v(\cdot, t)$ and $w(\cdot, t)$ become a zero-curvature. This happens when $v$ and $w$ 
attain the boundary data in the trace sense at least at one endpoint of $\Omega$.
\end{proof}

\begin{lemma}\label{l310}
If $f$ satisfies (\ref{tzal}), then for all $t>0$ the number of non-zero curvature facets of positive length of $u(\cdot, t)$ is finite. 
\end{lemma}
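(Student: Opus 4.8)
The plan is to combine a time–uniform bound on the number of monotonicity changes of $u(\cdot,t)$ with a structural description of where non-zero curvature facets can sit. \emph{Step 1: no new oscillations.} Let $n(t)$ denote the number of maximal intervals on which $u(\cdot,t)$ is monotone; by hypothesis (\ref{tzal}) we have $n(0)<\infty$. I would first show that $n$ is non-increasing on $[0,\infty)$, so that $n(t)\le n(0)<\infty$ for every $t>0$. This is the one–dimensional ``no new oscillations'' principle and there are two natural ways to obtain it. The first is to derive it from the Comparison Principle (Proposition \ref{comprin}): for each level $c\in\bR$ the number of sign changes of $u(\cdot,\cdot)-c$ cannot increase in time, since on a subinterval on which $u(\cdot,t_1)-c$ has a fixed sign one traps $u$ between spatially constant barriers, the endpoints of such a subinterval being points of the parabolic boundary where the constant barrier meets the boundary condition in the viscosity sense; taking the supremum over $c$ turns this into the statement that no new monotone piece appears, and the boundary terms are handled by the same viscosity–sense considerations as in the barrier constructions elsewhere in this section. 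The second, cleaner, route is to argue on the level of the strictly parabolic approximations of \cite{milena}: there the zero number of $u^\eps(\cdot,t)-c$ is classically finite for $t>0$ and non-increasing (Sturmian theory), and since the number of laps is lower semicontinuous along $u^\eps\to u$ in $L^1$, one gets $n(t)\le\liminf_\eps n^\eps(t)\le\liminf_\eps n^\eps(0)=n(0)$.

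\emph{Step 2: localising the non-zero curvature facets.} Fix $t>0$. Since $u(\cdot,t)\in D(\partial\bar\Phi_{0,0,\vfi})$, by Proposition \ref{p2.1} there is $z(\cdot,t)\in W^{1,\infty}(\Omega)$ with $\|z(\cdot,t)\|_{L^\infty}\le1$ and $(z,Du)=|Du|$ as measures; in particular $z\equiv1$ on every maximal interval where $u(\cdot,t)$ increases and $z\equiv-1$ on every maximal interval where it decreases, and, since $z$ is continuous, $u(\cdot,t)$ can have no non-flat local extremum. Hence $u(\cdot,t)$ alternates between facets and non-flat monotone connectors, and each of its local extrema is a positive-length facet. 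Let $[\xi^-,\xi^+]$ be the pre-image of a facet with $0<\xi^-$ and $\xi^+<L$; by maximality $u(\cdot,t)$ is a non-flat monotone connector just to the left of $\xi^-$ and just to the right of $\xi^+$, so $z(\xi^-),z(\xi^+)\in\{-1,1\}$, and if $u(\cdot,t)$ is monotone in the same direction on the two sides then $z(\xi^-)=z(\xi^+)$, i.e.\ the facet has zero curvature. Therefore an interior non-zero curvature facet sits exactly at a change of monotonicity of $u(\cdot,t)$, so there are at most $n(t)-1$ of them; adding the at most two facets with $\xi^-=0$ or $\xi^+=L$ — the only ones for which the boundary alternative in Definition \ref{2}(b)(ii) is relevant — the number of non-zero curvature facets of positive length of $u(\cdot,t)$ is at most $n(t)+1\le n(0)+1<\infty$.

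\emph{Expected main obstacle.} Step 2 is essentially bookkeeping once the fine structure of elements of $D(\partial\bar\Phi_{0,0,\vfi})$ is available, which it is through Proposition \ref{p2.1}. The substantial point is Step 1: converting the Comparison Principle — stated for data ordered on the entire parabolic boundary — into control of the number of level crossings when the boundary values of $u$ and of the constant comparison function disagree; this requires the careful viscosity–sense argument for facets touching $\partial\Omega$ that also underlies the barrier constructions in this section. I would therefore prefer to run Step 1 through the parabolic approximations, where all quantities are classical and only the (routine) lower semicontinuity of the lap number under $L^1$–convergence is needed, provided the monotonicity statements of \cite{milena} can be cited directly; otherwise the comparison–principle route above is the fallback.
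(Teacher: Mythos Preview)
Your argument is sound, but the paper gives no proof beyond the one-line citation ``This is the content of \cite[Theorem 4.5]{milena}.'' You have effectively reconstructed a sketch of what the cited result says, rather than matched the paper.

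Your Step~2 structural bookkeeping is correct: once $u(\cdot,t)\in D(\partial\bar\Phi_{0,0,\vfi})$, continuity of $z$ forces every local extremum to be a positive-length facet, and an interior facet has non-zero curvature only where the monotonicity flips, so the count is controlled by the lap number plus the two possible boundary facets. You are also right to locate the real content in Step~1. Of the two routes you propose, the approximation route is indeed the clean one and is in the spirit of how \cite{milena} proceeds --- classical Sturmian lap-number monotonicity for the strictly parabolic regularisations, then lower semicontinuity of the lap number under $L^1$-convergence. The comparison-principle fallback faces precisely the obstacle you name: Proposition~\ref{comprin} demands ordering on the entire parabolic boundary, and localising it to sub-intervals where $u-c$ has fixed sign requires a careful viscosity-sense argument at the endpoints. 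One minor caveat for the approximation route: if the regularised initial data are forced to match the Dirichlet values $A,B$ while $f$ does not, the mollified $f_\eps$ may acquire up to two extra monotone pieces near $\partial\Omega$, so the resulting bound is $n(0)+O(1)$ rather than exactly $n(0)$ --- harmless for the finiteness conclusion.
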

\begin{proof}
 This is the content of \cite[Theorem 4.5]{milena}.
\end{proof}
We would like to establish more precise information about solutions to (\ref{r-dyn}) in a boundary layer.
\begin{lemma}\label{l311} For every $t\ge 0$ such that
%Let us suppose that at $t>0$ 
the solution $u(\cdot, t)$ satisfies  (\ref{tzal})  there is $\eps>0$ such that:\\
(i) if $\gamma u(0, t)> \vfi(0)$, then $u_t(x, t)\le 0$ for $x\in(0,\eps)$;\\
(ii) if $\gamma u(0, t)< \vfi(0)$, then $u_t(x, t)\ge 0$ for $x\in(0,\eps)$;\\
(iii) if $\gamma u(L, t)> \vfi(0)$, then $u_t(x, t)\le 0$ for $x\in(L-\eps,L)$;\\
(iv) if $\gamma u(L, t)< \vfi(0)$, then $u_t(x, t)\ge 0$ for $x\in(L-\eps,L)$.
\end{lemma}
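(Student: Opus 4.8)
The plan is to prove statement (i) — cases (ii)–(iv) follow by the obvious symmetries (reflection $x\mapsto L-x$ and sign reversal $u\mapsto -u$). Fix a time $t$ for which $u(\cdot,t)$ satisfies (\ref{tzal}), and suppose $\gamma u(0,t)>\vfi(0)$. Recall that $u_t(x,t) = z_x(x,t)$ where $z(\cdot,t)$ is the minimal section, i.e. the minimizer of (\ref{r-msec}) subject to $z\in\partial\bar\Phi_{0,0,\vfi}(u(\cdot,t))$; so the conclusion $u_t\le 0$ on $(0,\eps)$ is exactly the statement that $z(\cdot,t)$ is nonincreasing on a right-neighbourhood of $0$.

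First I would use (\ref{tzal}) and Lemma~\ref{l310} to enumerate the finitely many non-zero curvature facets of $u(\cdot,t)$ with preimages $[\xi_i^-,\xi_i^+]$, $i=1,\dots,N$, all of positive length and ordered, and argue exactly as in the proof of Lemma~\ref{l3.9} (case (i) vs.\ case (ii)) that near $x=0$ there are two possibilities. Either $\xi_1^->0$, and then $\tilde u(\cdot,t)$ is monotone on $[0,\xi_1^-)$ — the lack of monotonicity would force $\{0\}$ to be the preimage of a zero-length non-zero curvature facet, which (\ref{tzal}) forbids — so on a genuine interval $(0,\eps)$ with $\eps\le\xi_1^-$ we have $u_x(\cdot,t)$ of one sign and no singularity of $W$ is crossed; or $\xi_1^-=0$, the facet touches the boundary. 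In the first possibility, on $(0,\eps)$ the function $u$ is strictly monotone, hence $(z,Du)=|Du|$ forces $z\equiv +1$ or $z\equiv -1$ there by $\|z\|_\infty\le1$; either way $z$ is constant, so $z_x=u_t=0\le0$ on $(0,\eps)$, which suffices.

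The remaining, and main, case is $\xi_1^-=0$: the facet $F(0,\xi_1^+)$ is a non-zero curvature facet of positive length touching the left endpoint. Since $\gamma u(0,t)>\vfi(0)$, the facet does \emph{not} satisfy the boundary condition in the viscosity sense with equality, so by Definition~\ref{2}(b) its having non-zero curvature together with Definition~\ref{d-bc} forces the only admissible alternative: $z(0,t)=1$. Now I use the structure of the minimal section on the faceted region $[0,\xi_1^+]$: on this interval $u_x\equiv 0$, $(z,Du)=|Du|$ is vacuous, $\|z\|_\infty\le1$, and $z$ minimizes $\int z_x^2$; because the facet has non-zero curvature, $z$ cannot be constant equal to $1$ on all of $[0,\xi_1^+]$ (else it would be a zero-curvature facet), so the $L^2$-optimal such $z$ is affine on $[0,\xi_1^+]$ with $z(0,t)=1$ and $z(\xi_1^+,t)<1$; hence $z_x<0$, i.e. $u_t(x,t)<0<\!\!\!\!\!/\,$ — precisely $u_t(x,t)\le 0$ — on $(0,\xi_1^+)$, and we may take any $0<\eps\le\xi_1^+$. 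If instead the facet $F(0,\xi_1^+)$ happened to have \emph{zero} curvature but $\gamma u(0,t)\ne\vfi(0)$, Definition~\ref{2}(b)(ii) still requires the boundary condition in the viscosity sense, which with $\gamma u(0,t)>\vfi(0)$ again gives $z(0,t)=1$, and the same affine-section argument applies (now $z\equiv1$ so $u_t=0$).

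I expect the delicate point to be the bookkeeping in the case $\xi_1^-=0$: one must be careful that "non-zero curvature facet touching the boundary" is compatible with the viscosity boundary condition only through $z(0,t)=1$ (given $\gamma u(0,t)>\vfi(0)$), and then that the minimal-section characterization (\ref{r-msec}) genuinely pins down $z$ to be affine on the faceted interval with the correct sign of slope — this uses that the minimizer of $\int z_x^2$ over functions with prescribed endpoint values is the affine interpolant, combined with the fact that the endpoint value at $\xi_1^+$ is $\le1$ and cannot equal $1$ for a non-zero curvature facet. Once $z_x\le 0$ on $(0,\xi_1^+)$ is established in every case, choosing $\eps=\min\{\xi_1^+,\dots\}$ (or $\eps=\xi_1^-$ in the monotone case) completes the argument, and cases (ii)–(iv) are immediate by the reflection symmetries noted at the outset.
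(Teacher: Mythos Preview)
Your proof is correct and follows essentially the same case analysis as the paper's own argument: near $x=0$ one finds either a strictly monotone piece, a zero-curvature facet, or a non-zero-curvature facet, and in each case the boundary constraint $z(0,t)=1$ (from $\gamma u(0,t)>\vfi(0)$) together with the affine structure of the minimal section on facets forces $z_x\le 0$, hence $u_t\le 0$. One small imprecision: in your case $\xi_1^->0$ you assert $u$ is \emph{strictly} monotone on $(0,\eps)$, but a zero-curvature facet could sit at the boundary there --- the paper singles this out as a separate case (its case (b)) --- though your conclusion $z\equiv\pm1$, $u_t=0$ remains valid once you observe that on such a facet $z$ is still constant by the minimal-section property and the matching endpoint values.
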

\begin{proof}
It suffices only to show (i). Due to our assumption there is a finite number of open intervals $I_1, \ldots, I_N$ such that $f |_{I_j}$ is strictly monotone and facets separating these intervals. We consider different types of behavior of the solution $u$ in a neighborhood of the boundary points. It suffices to consider $x=0$.

(a) We assume that $I_1 = (0, \xi_1)$ and $u$ satisfies the boundary condition in the viscosity sense, i.e. $\tilde u$ is monotone on $[0,\xi_1)$. If we combine it with the assumption  in part (i) we conclude that $\tilde u(\cdot, t)$ is monotone increasing. As a result, $z(0,t) = 1 = z(\xi_1,t)$. Hence,
%Let us assume that $f$ is increasing on $I_1$, then $As a result,
\begin{equation}\label{r-l1}
 u_t(x,t) = z_x(x,t) =0 
\end{equation}
for $x\in (0, \xi_1)$.

(b) A zero-curvature facet touches the boundary at $x=0$. Since the zero-curvature facets do not move we see that (\ref{r-l1}) holds again for $x$ from the facet preimage.

(c) A facet touches the boundary, since this time its curvature is not zero we deduce that $u(\cdot, t)|_{I_1}$ is strictly decreasing,
%Let  us assume that $f$ is decreasing on $I_1$, 
then $z(0,t) = 1 = -z(\xi_1,t)$, where $\xi_1$ is the common endpoint of $I_1$ and facet preimage. As a result,
$$
u_t(x,t) = z_x(x,t) <0 
$$
for $x\in (0, \xi_1)$, because $z(\cdot, t)$ is a linear function on $(0,\xi_1)$. Our claim follows.
\end{proof}
Let us note that our assumption (\ref{tzal}) excludes zero-length facets with preimages $\{0\}$ or $\{L\}$.

For a fixed $t>0$ we set $l(t)$ (resp. $r(t)$)
to be the maximal number with the properties postulated by Lemma \ref{l311}. It turns out that intervals $[0, l(t)]$ and $[L-r(t), L]$ cannot shrink to a point.

\begin{lemma}\label{l312}
If $f$  satisfies (\ref{tzal}) and  $\gamma f(0) > \vfi(0)$ or  $\gamma f(0) < \vfi(0)$, (resp.   $\gamma f(L) > \vfi(L)$ or  $\gamma f(L) < \vfi(L)$),
then there is $\delta>0$ such that
$$
\delta \le \inf_{t\in(0,T_{ext})}\min\{ l(t), r(t)\}.
$$
Moreover, $u_t$ never changes sign on intervals $(0,\delta)$ and $(L-\delta, L)$.
\end{lemma}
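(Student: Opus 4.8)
The plan is to reduce to a single endpoint and a single sign, to nail down the local shape of $f$ there, and then to control the motion of the ``boundary-layer facet'' of $u(\cdot,t)$ uniformly in $t$ by reading off facet velocities from Lemma \ref{l311} and using the Comparison Principle.

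First I would note that, by the reflection $x\mapsto L-x$, it suffices to bound $l(t)$ from below under the assumption $\gamma f(0)\neq\vfi(0)$; the bound for $r(t)$ is obtained identically from $\gamma f(L)\neq\vfi(L)$, and $\delta$ is then the smaller of the two constants. Replacing $(u,f,\vfi)$ by $(-u,-f,-\vfi)$ I may assume $a_0:=\gamma f(0)>\vfi(0)=:a$. Next I would pin down $f$ near $0$: since $f$ satisfies (\ref{tzal}) and this forbids a zero-length facet with preimage $\{0\}$ (as recorded just after Lemma \ref{l311}), a strictly decreasing branch issuing from the boundary value $a_0>a$ is impossible --- it would give $\tilde f$ a corner maximum at $\{0\}$. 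So near $0$ either (i) $f$ is strictly increasing with $f(0^+)=a_0$, and (\ref{tzal}) then provides a first local-maximum facet $[p_0,q_0]$ of $f$ of positive length with $0<p_0<q_0$; or (ii) $f$ coincides on $[0,q_0]$ with a facet at height $a_0$, in which case I set $p_0=0$. I would put $\delta_0:=q_0>0$ and aim to show $l(t)\ge q_0$ for every $t\in(0,T_{ext})$ at which $u(\cdot,t)$ satisfies (\ref{tzal}).

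The heart of the argument is tracking the facet of $u(\cdot,t)$ sitting above $x=0$. While $\gamma u(0,t)>\vfi(0)$, the case analysis of Lemma \ref{l311} shows that near $0$ either $u$ is nondecreasing --- so $z\equiv 1$ and $u_t=0$ there --- or a boundary facet of positive length sits at $x=0$ at height $\gamma u(0,t)$; since $z(0,t)=1$, this facet can only move down, and then only when the branch to its right is decreasing, in which event it \emph{grows} to the right. Likewise the first \emph{interior} local-maximum facet of $u(\cdot,t)$ moves down, so its right endpoint is nondecreasing in $t$, while the increasing run that separates it from $x=0$ can only shrink; when that run vanishes, the facet merges with the boundary exactly at height $a_0$, and thereafter the merged boundary facet drops towards $\vfi(0)$ and, upon reaching it, becomes a zero-curvature facet and freezes. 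Thus the right endpoint $q(t)$ of this maximum/boundary facet stays $\ge q_0$, and $u_t\le 0$ on $(0,q(t))$; by Lemma \ref{l311} the sign of $u_t$ turns positive only at the first interior local-minimum facet of $u(\cdot,t)$, which lies to the right of $q(t)$. Hence $l(t)\ge q(t)\ge q_0$. To complete this I would also invoke the Comparison Principle, Proposition \ref{comprin}, exactly as in the proof of Lemma \ref{l3.9}, to conclude that $\gamma u(0,t)$ never drops below $\vfi(0)$. Finally, with $\delta:=\min\{\delta_0,\delta_L\}$ I get $\delta\le\inf_{t\in(0,T_{ext})}\min\{l(t),r(t)\}$, and since $u_t\le 0$ on all of $(0,\delta)\times(0,T_{ext})$ (resp. $u_t\ge 0$ there when $\gamma f(0)<\vfi(0)$), and symmetrically near $L$, the sign of $u_t$ on these strips never changes.

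I expect the main obstacle to be the structural claim in the previous paragraph: that the boundary-layer facet of $u(\cdot,t)$ never migrates to $x=0$. Turning the facet-velocity heuristics into a proof requires careful bookkeeping of the possible merging events between this facet and its neighbours, and it is precisely here that the Comparison Principle is indispensable --- to guarantee that $\gamma u(0,t)$ stays on one side of $\vfi(0)$ for all $t$ and that no extremal facet sneaks in from the right across the width $q_0$.
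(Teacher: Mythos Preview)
Your approach differs substantially from the paper's. The paper argues by contradiction in a few lines: assume $l(t_n)\to 0$ along some sequence $t_n\to t_0$ and look at the boundary facet $F_n$ over $(0,l(t_n))$. If $F_n$ has non-zero curvature (i.e.\ the extension $\bar u(\cdot,t_n)$ fails to be monotone on any $(0,l(t_n)+\eta)$), the paper simply invokes \cite[Proposition~4.1]{milena} and \cite{mury} for the fact that a non-zero-curvature facet cannot shrink to a point. If $F_n$ has zero curvature, it is immobile and can only lengthen through collisions with neighbouring facets, so $l$ jumps upward at such events. Either way $l(t_n)\to 0$ is impossible. No Comparison Principle is used, and no explicit value of $\delta$ is produced.

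Your route --- explicitly tracking the first local-maximum (or boundary) facet of $u(\cdot,t)$ and arguing that its right endpoint never drops below $q_0$ --- is more constructive and would yield a concrete $\delta$, but as you yourself flag, the merger bookkeeping is not carried out. After the first maximum facet merges with the first minimum facet on its right, one must handle the resulting zero-curvature terrace and its subsequent interactions with facets further right; this inductive facet analysis is precisely what the cited non-shrinking result from \cite{milena} already packages. Your appeal to the Comparison Principle is also unnecessary here: once $\gamma u(0,t)$ reaches $\vfi(0)$ the boundary structure becomes zero-curvature and freezes, so $\gamma u(0,t)$ cannot cross $\vfi(0)$; this follows from the subdifferential characterisation and the facet-velocity rules rather than from comparison with barriers. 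In short, your plan is sound in spirit and buys an explicit constant, but the paper's two-case dichotomy plus citation is both shorter and already rigorous.
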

\begin{proof} By previous lemma we have $l(t), r(t)>0$ for all $t\in (0,T_{ext})$. 
Let us suppose the contrary to our claim, i.e. $t_n\to t_0$ and $l(t_n)\to 0$. 
First  we consider
facet $F_n =\{ (x,u(x, t_n)): x\in(0,l(t_n))\}$ with a non-zero curvature. This happens when function
$$
\bar u(x, t_n) = \left\{ 
\begin{array}{ll}
\vfi(0) & x= 0,\\
u(x, t_n) & x\in (0, l(t_n) + \eta)
\end{array} \right. 
$$
for no $\eta>0$ is monotone. We showed in \cite[Proposition 4.1]{milena} and \cite{mury} that  
shrinking to a point a non-zero curvature facet is impossible.

Let us now assume that each facet $F_n(t) =\{ (x,u(x, t_n)): x\in(0,l(t_n))\}$ has  zero curvature.  This happens when there is $\eta>0$ such that function $\bar u(\cdot , t_n)$ is monotone. However, zero-curvature facets
do not move. They may collide with a neighboring facet. When this happens, $l$ will increase in a discontinuous way at the collision. Thus,  $l(t_n)\to 0$ is impossible.
\end{proof}
%[WYKAZAC, ŻE NIERÓWNOŚĆ Z ZAŁOŻENIA PROPAGUJE SIE NA ROZWIAZANIA GDY, $T>0$.]

These observations lead to the following conclusion:

\begin{theorem}\label{zp}
If $f$ satisfies (\ref{tzal}) and $u_T$ is the terminal state of evolution, then there exists $\delta>0$ such that:\\
(i) if $\gamma f(0)> \vfi(0)$, then $f(x) \ge  u_T(x)$ for $x\in (0,\delta)$;\\
(ii) if $\gamma f(0)< \vfi(0)$, then $f(x) \le u_T(x)$ for $x\in (0,\delta)$;\\
(iii) if $\gamma f(L)> \vfi(L)$, then $ f(x)\ge u_T(x)$ for $x\in (L-\delta,L)$;\\
(iv) if $\gamma f(L)< \vfi(L)$, then $f(x) \le  u_T(x)$ for $x\in (L-\delta,L)$.
\end{theorem}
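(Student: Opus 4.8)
The plan is to read off the statement from Lemmas \ref{l311} and \ref{l312} together with the fact that $u_T=u(T_{ext})$, where $u$ is the solution of (\ref{r-dyn}) with $u(0)=f$. I describe case (i); cases (ii)--(iv) follow verbatim using the corresponding parts of Lemmas \ref{l311} and \ref{l312} (for (iii) and (iv) one works at the endpoint $L$, and the final $\delta$ is the minimum of the radii produced at $x=0$ and at $x=L$; when $\gamma f$ already equals $\vfi$ at an endpoint the associated assertion is vacuous and imposes no constraint on $\delta$).

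First I would apply Lemma \ref{l312}: since $f$ satisfies (\ref{tzal}) and, in case (i), $\gamma f(0)\ne\vfi(0)$, there is $\delta_0>0$ with $\delta_0\le\inf_{t\in(0,T_{ext})}\min\{l(t),r(t)\}$, and $u_t(x,t)$ does not change sign as $(x,t)$ ranges over $(0,\delta_0)\times(0,T_{ext})$. To identify that sign I would invoke Lemma \ref{l311}(i) at $t=0$, which is admissible because $u(\cdot,0)=f$ satisfies (\ref{tzal}) by hypothesis and $\gamma u(0,0)=\gamma f(0)>\vfi(0)$: it yields $\eps>0$ with $u_t(x,0)\le 0$ for $x\in(0,\eps)$. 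Putting $\delta:=\min\{\delta_0,\eps\}$, the (constant) sign on $(0,\delta)\times(0,T_{ext})$ is the non-positive one, i.e. $u_t(x,t)\le 0$ for all $x\in(0,\delta)$ and all $t\in(0,T_{ext})$.

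It remains to integrate in time. For each $s\in(0,T_{ext})$ the solution is Lipschitz in time on $[s,T_{ext}]$ (by part (b) of the Proposition on the dynamical problem, $t\,\frac{du}{dt}\in L^\infty(0,\infty;L^2)$), so $u(x,T_{ext})-u(x,s)=\int_s^{T_{ext}}u_t(x,\tau)\,d\tau\le 0$ for a.e. $x\in(0,\delta)$; letting $s\downarrow 0$ and using $u(\cdot,s)\to u(\cdot,0)=f$ in $L^2(\Omega)$ (hence a.e. along a subsequence) gives $u_T(x)=u(x,T_{ext})\le f(x)$ for a.e. $x\in(0,\delta)$, which is (i). Equivalently, since by Lemmas \ref{l310} and \ref{l311} the bound $u_t(\cdot,t)\le 0$ on $(0,\delta)$ holds for every $t>0$, the map $t\mapsto u(x,t)$ is non-increasing on $[0,T_{ext}]$ for a.e. $x\in(0,\delta)$, and the claim follows from $u(x,0)=f(x)$ and $u(x,T_{ext})=u_T(x)$.

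There is no real obstacle left at this point: the substance of the theorem lies in Lemmas \ref{l311} and \ref{l312} (the precise maximum principle in the boundary layer and the non-degeneracy of its width). The only steps that need a line of care are that $t=0$ is an admissible time for Lemma \ref{l311} — which is precisely hypothesis (\ref{tzal}) on $f$ — and the passage from the sign of $u_t$ to the pointwise inequality between $f$ and $u_T$, handled by the local-in-time Lipschitz bound on $u$ away from $t=0$ together with the continuity of $u$ in $L^2(\Omega)$ up to $t=0$.
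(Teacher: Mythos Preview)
Your proof is correct and follows essentially the same approach as the paper: invoke Lemma~\ref{l312} for the existence of a uniform boundary layer on which $u_t$ has a fixed sign, identify that sign via Lemma~\ref{l311} (the paper phrases this as $(\gamma f-\vfi)u_t\le 0$), and deduce the inequality between $f$ and $u_T$. You are simply more explicit than the paper about the time-integration step and the passage $s\downarrow 0$; the paper's proof compresses all of this into ``hence, our claim follows.''
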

\begin{proof}
Lemma \ref{l310} guarantees that $u(t)$ satisfy (\ref{tzal}) for all $t>0$. At the same time due to Lemma \ref{l312} the time derivative $u_t$ has a constant sign on $(0,\delta)$ and $(L-\delta, L)$. Moreover, $ (\gamma f - \vfi) u_t\le 0$ on these intervals. Hence, our claim follows.
\end{proof}

We can also identify conditions sufficient for the attainment of the boundary data. It is rather clear that they are not necessary.

\begin{corollary}\label{w-wb}
 Let us suppose that $f\in BV$ satisfies  (\ref{tzal}) and $u_T$ is the terminal state of evolution. If in addition
 $$
 \gamma f(0) \not\in (\min \vfi, \max\vfi),\quad\hbox{and}\quad
 \gamma f(L) \not\in (\min \vfi, \max\vfi),
 $$
 then $\gamma u_T = \vfi$.
\end{corollary}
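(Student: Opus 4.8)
The plan is to read off the conclusion from the a priori bound of Lemma~\ref{l3.9} together with the boundary--layer description in Lemmas~\ref{l311}, \ref{l312} and Theorem~\ref{zp}. First I would record two preliminary facts: since $\partial\Om$ consists of the two points $0,L$, each of $\vfi(0)$ and $\vfi(L)$ equals $\min\vfi$ or $\max\vfi$; and by Lemma~\ref{l3.9} the traces $\gamma u_T(0)$ and $\gamma u_T(L)$ already lie in $[\min\vfi,\max\vfi]$. By symmetry it is enough to prove $\gamma u_T(0)=\vfi(0)$. If $\vfi(0)=\vfi(L)$ this is immediate: by the characterization of terminal states (Proposition~\ref{p2.1} with $\lambda=0$, so that the associated field $z$ is constant while $(z,Du_T)=|Du_T|$) the only admissible $u_T$ is $u_T\equiv\vfi(0)$. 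So I would assume $\vfi(0)\ne\vfi(L)$.

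Next I would split into cases according to the position of $\gamma f(0)$. If $\gamma f(0)=\vfi(0)$, the boundary datum is attained in the trace sense already at $t=0$, and using (\ref{tzal}) one checks that near $x=0$ the profile of $f$ is either monotone toward the level $\vfi(0)$ or a facet at level $\vfi(0)$ touching $x=0$; in the first case that monotone piece carries $z\equiv\pm1$, so $u_t=z_x=0$ there, and in the second the facet has zero curvature in the sense of Definition~\ref{2} and does not move. Either way $\gamma u(0,t)\equiv\vfi(0)$ for all $t$, hence $\gamma u_T(0)=\vfi(0)$. If $\gamma f(0)\ne\vfi(0)$, then $\gamma f(0)\notin(\min\vfi,\max\vfi)$ means $\gamma f(0)\le\min\vfi$ or $\gamma f(0)\ge\max\vfi$. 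I would then invoke Theorem~\ref{zp} together with Lemma~\ref{l312} (the sign of $u_t$ on a fixed layer $(0,\delta)$ does not change in time) to deduce that $t\mapsto\gamma u(0,t)$ is monotone and moves toward $\vfi(0)$, and I would use Lemma~\ref{l311} to show that once $\gamma u(0,t)$ reaches $\vfi(0)$ the derivative $u_t$ is forced to vanish on a small interval $(0,\eta)$, so the trace cannot overshoot. Letting $t\to T_{ext}$ and using $\gamma u_T(0)\in[\min\vfi,\max\vfi]$ should then give $\gamma u_T(0)=\vfi(0)$.

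The delicate point is this last implication: Theorem~\ref{zp} only provides a one--sided comparison between $f$ and $u_T$ near each endpoint, so one must argue that the moving trace $\gamma u(0,t)$ is actually driven all the way to $\vfi(0)$ and not merely to some intermediate level in $[\min\vfi,\max\vfi]$ at which the boundary layer has already become monotone and $u_t$ has stabilized at $0$. To close this I would use the two endpoints jointly: if $\gamma u_T(0)\ne\vfi(0)$ then, by Proposition~\ref{p2.1}, the constant field $z=z^{u_T}$ equals $\pm1$, which forces $u_T$ to be monotone and hence pins $\gamma u_T(L)$ on the far side of $\vfi(L)$; combined with the hypothesis $\gamma f(L)\notin(\min\vfi,\max\vfi)$ and the same boundary--layer analysis at $x=L$, this should produce a contradiction. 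I also expect to have to isolate and check by hand the borderline situations $\gamma f(0)\in\{\min\vfi,\max\vfi\}$, where $u_t$ may vanish on $(0,\delta)$ from the outset.
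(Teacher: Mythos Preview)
You are right to flag the ``delicate point'', and your proposed two--endpoint resolution does not close it; in fact the statement is false as written. Take $\Omega=(0,1)$, $\vfi(0)=1$, $\vfi(1)=0$ and $f\equiv-1$. Then $\gamma f(0)=\gamma f(1)=-1\notin(\min\vfi,\max\vfi)=(0,1)$ and (\ref{tzal}) holds (the single facet has positive length and nonzero curvature, since $\tilde f$ is not monotone). The flow keeps $u$ spatially constant: while $u<0$ one has $z(0)=-1$ and $z(1)=1$, hence $u_t=2$; when $u$ reaches $0$ the constraint at $x=1$ relaxes to $z(1)\in[-1,1]$, the minimal section becomes $z\equiv-1$, and the flow stops. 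Thus $u_T\equiv0$ and $\gamma u_T(0)=0\ne1=\vfi(0)$. Your contradiction argument breaks exactly here: $z^{u_T}\equiv-1$ forces only $\gamma u_T(1)\ge\vfi(1)$, which is satisfied with equality and yields no contradiction with the boundary--layer analysis at $x=L$.

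The paper's own proof shares this gap. Its displayed chain $v(t)\ge\gamma u(0,t)\ge\gamma u_T(0)\ge\vfi(0)$ tacitly assumes that $\gamma f(0)$ lies on the \emph{same} side of $[\min\vfi,\max\vfi]$ as $\vfi(0)$, so that the trace at $0$ is monotone toward $\vfi(0)$ and cannot pass it (Lemma~\ref{l311}); likewise at $x=L$. Under that stronger hypothesis --- $\gamma f(0)\ge\vfi(0)$ when $\vfi(0)=\max\vfi$, $\gamma f(0)\le\vfi(0)$ when $\vfi(0)=\min\vfi$, and symmetrically at $L$ --- both the paper's barrier argument and your case analysis do go through, because Lemma~\ref{l3.9} supplies the matching inequality from the other side. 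But the hypothesis as stated also permits the ``wrong--side'' configuration $\gamma f(0)\le\min\vfi<\vfi(0)=\max\vfi$, in which the trace at $0$ climbs only until the \emph{pair} of boundary constraints admits a steady state, and that can occur strictly below $\vfi(0)$, as the example shows.
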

\begin{proof}
 We may assume that $\vfi(0)> \vfi(L)$. The case  $\vfi(0)< \vfi(L)$ is treated by the same argument and if  $\vfi(0)= \vfi(L)$, then Lemma \ref{l3.9} $u_T \equiv \vfi(0)$ and our claim holds.
 
If $v$ and $w$ are functions defined in the course of proof of Lemma \ref{l3.9}, then 
$$
v(t) \ge \gamma u(0,t) \ge \gamma u_T(0)\ge \vfi(0)> \vfi(L) \ge \gamma u_T(L) \ge \gamma u(L,t) \le w(t).
$$
Since
$$
v(T^u) = \vfi(0)> \vfi(L) = w(T_l),
$$
then our claim follows due to Lemma \ref{l3.9}.
\end{proof}

\section{Conditions for solvability of the Dirichlet problem in the trace sense}\label{s4}

Here is the first of our main observations.
\begin{theorem}\label{t4.1}
Let us suppose that $f\in BV(\Omega)$, $\gamma f = \vfi$, $\lambda$ is any positive number and $u$ is a minimizer of $\bar\Phi_{f,\lambda,\vfi}$. Then $\gamma u = \vfi$.
\end{theorem}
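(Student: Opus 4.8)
The plan is to read off everything from the Euler--Lagrange characterization in Proposition \ref{p2.1}. Since $u$ minimizes $\bar\Phi_{f,\lambda,\vfi}$ we have $0\in\partial\bar\Phi_{f,\lambda,\vfi}(u)$, so there is a field $z\in W^{1,\infty}(\Omega)$ with $\|z\|_{L^\infty}\le1$, $z_x=\lambda(u-f)$ a.e.\ in $\Omega$, and the boundary inclusions (\ref{r2.3}). I would then argue that these facts, together with the hypothesis $\gamma f=\vfi$, already force $\gamma u(0)=\vfi(0)$ and $\gamma u(L)=\vfi(L)$; by the left--right symmetry of the problem it suffices to treat the endpoint $x=0$, where only the alternatives $\gamma u(0)>\vfi(0)$ and $\gamma u(0)<\vfi(0)$ must be ruled out. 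Note that the condition $(z,Du)=|Du|$ plays no role in the argument.

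The core step is a contradiction argument. Assume $\gamma u(0)>\vfi(0)$. Then $\sgn(\vfi(0)-\gamma u(0))=\{-1\}$, so (\ref{r2.3}) gives $z(0)=1$, the largest value compatible with $\|z\|_{L^\infty}\le1$. Because $z\in W^{1,\infty}(0,L)$ is Lipschitz on $[0,L]$ and $z_x=\lambda(u-f)$ a.e., one has the representation
\begin{equation*}
 z(x)=z(0)+\int_0^x z_x(t)\,dt=1+\lambda\int_0^x\bigl(u(t)-f(t)\bigr)\,dt,\qquad x\in[0,L].
\end{equation*}
In one dimension $u-f\in BV(\Omega)$ has a genuine right limit at $0$ equal to $\gamma u(0)-\gamma f(0)=\gamma u(0)-\vfi(0)>0$, so there is $\delta>0$ with $u-f>0$ a.e.\ on $(0,\delta)$; hence $z(x)>1$ for $x\in(0,\delta)$, contradicting $\|z\|_{L^\infty}\le1$. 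The case $\gamma u(0)<\vfi(0)$ is handled symmetrically: now $z(0)=-1$, the right limit of $u-f$ at $0$ is negative, and the same identity yields $z(x)<-1$ on a right neighbourhood of $0$. Thus $\gamma u(0)=\vfi(0)$; the endpoint $L$ is identical after replacing the representation by $z(x)=z(L)-\lambda\int_x^L(u-f)\,dt$ and using $z(L)=-1$ if $\gamma u(L)>\vfi(L)$ and $z(L)=1$ if $\gamma u(L)<\vfi(L)$, together with the left limit of $u-f$ at $L$.

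The only point that needs a little care — and the only place where the hypothesis $f\in BV(\Omega)$ is genuinely used — is the passage from the strict inequality of traces $\gamma u(0)>\gamma f(0)$ to the pointwise inequality $u>f$ on a one-sided neighbourhood of $0$. This relies on the elementary fact that a scalar $BV$ function on an interval possesses honest one-sided limits at the endpoints and that the $BV$ trace there coincides with this limit, so a strict inequality between the two limiting values survives in a small boundary layer. Everything else is bookkeeping with signs in (\ref{r2.3}) and the fundamental theorem of calculus for the Lipschitz function $z$, so I do not anticipate any further obstacle.
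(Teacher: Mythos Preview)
Your proof is correct and follows essentially the same route as the paper: both use the Euler--Lagrange characterization from Proposition~\ref{p2.1}, integrate $z_x=\lambda(u-f)$ from the endpoint, and derive a contradiction with $\|z\|_{L^\infty}\le 1$ from the assumption $\gamma u(0)\neq\vfi(0)$. The only minor technical difference is in how the positivity of $\int_0^x(u-f)\,ds$ is obtained: the paper uses the integral-average characterization of the trace, $\lim_{x\to 0}\tfrac{1}{x}\int_0^x v\,ds=\gamma v(0)$, while you invoke the one-sided limit of a one-dimensional $BV$ function to get the pointwise inequality $u-f>0$ on a boundary layer; both are standard and equivalent here.
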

\begin{proof}
{\it Step 1.}
Since $u$ is a minimizer of $\Phi_{f,\lambda,\vfi}$, then $0\in \partial \Phi_{f,\lambda,\vfi}(u)$. We know the characterization of the subdifferential of $\Phi_{f,\lambda,\vfi}(u)$, i.e, then there exists $z\in W^{1,\infty}(\Omega)$ satisfying
\begin{equation}\label{r-gt1}
 0 = - z_x + \lambda(u-f), \qquad\hbox{in }\Omega,
\end{equation}
where for $x\in (0,L)$ we have
$$
z(x) = \left\{
\begin{array}{ll}
\in [-1, 1] & \hbox{if }Du = 0,\\
\sgn (Du) & \hbox{if }Du  \neq 0.\\
\end{array}\right.
$$
In particular, if $x\in \partial\Omega$, then we recall (\ref{r2.3}),
$$
- z(0) \in \sgn (\vfi(0) - \gamma u(0)),\qquad
z(L)\in \sgn (\vfi(L) - \gamma u(L)).
$$
\begin{comment}
equation}\label{r-zb}
z(x) = \left\{
\begin{array}{ll}
\in [-1, 1] & \hbox{if }\gamma u = \vfi,\\
1 & \hbox{if }\gamma u > \vfi,\\
-1 & \hbox{if }\gamma u < \vfi.
\end{array}\right. 
\end{equation}
\end{comment}
Let us  integrate (\ref{r-gt1}) over $(0,x_n)$ where the sequence $x_n$ converges to $0$. 
$$
z(x_n) = z(0) + \lambda\int_0^{x_n}(u-f)\, ds
$$
{\it Step 2.}
We have to investigate the value of $z$ at $x=0$. the boundary of $\Omega$. We  claim that if $\gamma u(0) > \gamma f(0)$, then there is $x_0\in (0,L)$ such that
$$
\int_0^{x_0} (u(s) - f(s))\, ds >0.
$$
Let us suppose that the claim is false and for all $x\in (0,L)$ we have
\begin{equation}\label{r-gt2}
\int_0^{x_0} (u(s) - f(s))\, ds \le 0.
\end{equation}
We divide both sides of this inequality by $x_n $ and we want to pass to the limit $x\to 0$. We first notice that %if $v\in BV(0,L)$, then
%$$
%\frac 1{x} \int_0^x v(s)\, ds = 
%\frac 1{x} \int_0^x (v(s) - \gamma v(0))\, ds + \gamma v(0).
%$$Hence, 
by the definition of the trace we have
$$
\lim_{x\to 0}\frac 1{x} \int_0^x v(s)\, ds 
=\gamma v(0).
$$
If we apply this observation to $v = u -f$ and take into account (\ref{r-gt2}), then we see,
$$
0\ge \lim_{x\to 0}\frac 1{x} \int_0^x (u(s) - f(s))\, ds
= \gamma(u -f)(0)>0.
$$
Thus, our claim follows.

{\it Step 3.} Let us suppose that the theorem is false and $\gamma u(0) > \gamma f(0)$. Since $z(0)\in \partial|\gamma (u -f)|$, then we infer
\begin{equation}\label{r-gt3}
z(0) =1.
\end{equation}
Since $\gamma u(0) > \gamma f(0)$, then Step 2 yields existence of $x_0\in (0,L)$ such that
$$
\int_0^{x_0} (u(s) - f(s))\, ds>0.
$$
This observation combined with (\ref{r-gt3}) leads to $z(x_0)>1$, but this is not possible. In the same manner we show that $\gamma u(0) < \gamma f(0)$ is impossible.
\end{proof}

We will state another attainment result. This time we assume that the data are small.
\begin{theorem}\label{t4.2}
Let us suppose that $f\in L^2(\Omega)$, $\lambda>0$ are such that $\lambda \|f\|_{L^1}\le 1$ and $\vfi = \bar f= \frac 1{|\Omega|} \int_\Omega f(x)\,dx$, then  $u = \bar f $ is a constant minimizer of $\bar\Phi_{f,\lambda, \vfi}$. Moreover, the boundary data are attained in the trace sense.
\end{theorem}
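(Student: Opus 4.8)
The plan is to check directly that the constant function $u=\bar f$ realizes the minimum of $\bar\Phi_{f,\lambda,\vfi}$, i.e.\ that $\bar\Phi_{f,\lambda,\vfi}(v)\ge\bar\Phi_{f,\lambda,\vfi}(\bar f)$ for every $v\in BV(\Omega)$; I prefer this to invoking Proposition~\ref{p2.1}, whose statement requires $z\in W^{1,\infty}(\Omega)$ and is therefore slightly awkward when $f$ is only in $L^2(\Omega)$. First I would record, using (\ref{rel-rof}), that $D\bar f=0$ and $\gamma(\bar f)=\bar f=\vfi$ on $\partial\Omega$, so the total variation term and both boundary penalties in $\bar\Phi_{f,\lambda,\vfi}(\bar f)$ vanish, leaving $\bar\Phi_{f,\lambda,\vfi}(\bar f)=\frac\lambda2\int_\Omega(\bar f-f)^2\,dx$.

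The main device is a calibration: set $z(x)=\lambda\int_0^x(\bar f-f(s))\,ds$. Since $\bar f-f$ has zero mean over $\Omega$, $z$ is absolutely continuous on $[0,L]$ with $z(0)=z(L)=0$ and $z'=\lambda(\bar f-f)$ a.e.\ (so $z\in W^{1,1}(\Omega)\cap C[0,L]$, as $f\in L^2(\Omega)\subset L^1(\Omega)$). The step I expect to matter most is the sharp bound $\|z\|_{L^\infty}\le1$: I would get it from the rewriting
\[
\frac1\lambda\,z(x)=\frac{x}{L}\int_x^L f(s)\,ds-\frac{L-x}{L}\int_0^x f(s)\,ds,
\]
which uses the zero-mean property, followed by the triangle inequality together with $\frac{x}{L},\frac{L-x}{L}\in[0,1]$, giving $|z(x)|\le\lambda\|f\|_{L^1}\le1$. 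This is exactly where the hypothesis $\lambda\|f\|_{L^1}\le1$ enters; without the zero-mean rewriting the naive estimate is a useless $2$, so this small rearrangement is the crux of the argument.

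Next, for an arbitrary $v\in BV(\Omega)$, I would use convexity of $t\mapsto\frac\lambda2(t-f(x))^2$ to obtain the pointwise inequality
\[
\frac\lambda2\bigl(v(x)-f(x)\bigr)^2\ge\frac\lambda2\bigl(\bar f-f(x)\bigr)^2+z'(x)\bigl(v(x)-\bar f\bigr)\qquad\text{a.e.,}
\]
integrate it over $\Omega$, and integrate by parts. Since $z$ vanishes at both endpoints of $(0,L)$ and $D\bar f=0$, the integration by parts formula for $BV$ functions (no boundary contribution) gives $\int_0^L z'(v-\bar f)\,dx=-\int_{(0,L)}z\,dDv$; this is the only other step needing a little care, as it uses $z\in W^{1,1}(\Omega)\cap C[0,L]$ and $v\in BV(0,L)\subset L^\infty(0,L)$ (one passes to the limit from $C_c^\infty(0,L)$, dense in $W^{1,1}_0(0,L)$). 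Because $\|z\|_{L^\infty}\le1$ we have $|\int_{(0,L)}z\,dDv|\le\int_\Omega|Dv|$, whence
\[
\int_\Omega|Dv|+\frac\lambda2\int_\Omega(v-f)^2\,dx\ge\frac\lambda2\int_\Omega(\bar f-f)^2\,dx=\bar\Phi_{f,\lambda,\vfi}(\bar f),
\]
and adding the nonnegative boundary terms $|\gamma v-\vfi|(0)+|\gamma v-\vfi|(L)$ yields $\bar\Phi_{f,\lambda,\vfi}(v)\ge\bar\Phi_{f,\lambda,\vfi}(\bar f)$.

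Finally I would conclude: $\bar f$ is a minimizer of $\bar\Phi_{f,\lambda,\vfi}$, and the unique one by strict convexity, so $u_{f,\lambda,\vfi}=\bar f$; being constant, it satisfies $\gamma u_{f,\lambda,\vfi}(0)=\bar f=\vfi(0)$ and $\gamma u_{f,\lambda,\vfi}(L)=\bar f=\vfi(L)$, so the Dirichlet data is attained in the trace sense.
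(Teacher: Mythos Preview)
Your proof is correct and takes a genuinely different route from the paper's. The paper invokes the subdifferential characterization of Proposition~\ref{p2.1}: it sets $g(x)=\int_0^x(\bar f-f)$, estimates the oscillation $g(x_M)-g(x_m)\le 2\|f\|_{L^1}\le 2/\lambda$, and then \emph{chooses} the constant $z(0)$ so that $z(\cdot)=z(0)+\lambda g(\cdot)$ lands in $[-1,1]$. You instead fix $z(0)=0$ from the outset and obtain the sharper pointwise bound $|z(x)|\le\lambda\|f\|_{L^1}$ directly, via the zero-mean rewriting $\frac1\lambda z(x)=\frac{x}{L}\int_x^Lf-\frac{L-x}{L}\int_0^xf$; this makes the shifting step unnecessary. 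More importantly, you bypass Proposition~\ref{p2.1} altogether and argue variationally (convexity of the fidelity term plus integration by parts against $Dv$), which is cleaner here: as you note, Proposition~\ref{p2.1} is stated for $z\in W^{1,\infty}(\Omega)$, whereas for $f$ merely in $L^2(\Omega)$ one only has $z'\in L^2(\Omega)$, so applying the proposition literally would require a remark that the characterization extends to $z\in W^{1,2}(\Omega)$. Your calibration argument sidesteps this regularity issue entirely and is self-contained.
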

\begin{proof}
It suffices to show that if we set $u = \bar f$, then formula (\ref{r-gt1}) leads to a right selection of $\partial\bar\Phi_{f,\lambda, \vfi}$. Integrating  (\ref{r-gt1}) over $(0,x)$ yields
$$
z(x) = z(0) + \lambda \int_0^x(\bar f -f(y))\, dy.
$$
We have to show that there is a choice of $z(0)\in[-1,1]$ such that $|z(x)|\le 1$. Let us define $g(x) = \int_0^x(\bar f -f(y))\, dy$ for $x\in [0,L]$. We will estimate the length of the interval 
$$
[\min\{ g(x): x\in [0,L]\},\ \max\{ g(x): x\in [0,L]\}].
$$
If  the minimum  of $g$ is attained at $x_m$ and its maximum is at $x_M$, then we have
$$
g(x_M) - g(x_m) = \int_{x_m}^{x_M} (\bar f - f(y))\,dy \le
|\int_0^b f(y)\,dy | + \|f\|_{L^1} \le \frac2\lambda.
$$
We define $z(0)$ as follows. If $\lambda g(x_m), \lambda g(x_M)\in [-1,1]$, then we set $z(0) =0$. If $\lambda g(x_m)<-1$, then we set 
$z(0) = -1 -\lambda g(x_m)$. Similarly, if  $\lambda g(x_M)>1$, then 
$z(0) = 1-\lambda g(x_M)$. Obviously, $z(x)\in [-1,1]$ for all $x\in [0, L]$. Hence $u= \bar f$ is a minimizer provided that we have no constraints on $z(0)$ and $z(L)$. This happens only when $\gamma u = \bar f = \vfi$.
\end{proof}
\begin{remark}
Let us notice that the construction of $z(0)$ presented in the proof above does not work when $\vfi \neq \bar f$, because $z(0)$ must be equal $\pm 1$. %Hence, max or min must be zero. This may happen only for quite peculiar data. [ROZWINAĆ?]
\end{remark}

Now, we would like to put the attainment problem in the context of the dynamical problem, see (\ref{r-dyn}). The parameter $\lambda$ is the inverse of the time step. The variational problem is the time semidiscretization of the dynamics. %However 
The solution to the minimization problem for a given $\lambda$ is different from the solution of the dynamics at $1/\lambda$ with initial condition $f$. However, it is relatively easy to determine the final state of evolution, but with only an estimate on the extinction time $T_{ext}$. 

Our main observation is the following: for a range of $\lambda$ the terminal state is also a minimizer of (\ref{r-min}). In this way we become interested in  a slightly modified attainment problem, where we vary the parameter $\lambda$. 
Moreover,
once we have the final state, we can check if it satisfies the boundary condition in the trace sense. If it does, then we can give an affirmative answer to our problem by saying: for a range of  parameter $\lambda$ the minimizer of (\ref{r-min}) attains the boundary data in the trace sense.

If it happens that the terminal state does not satisfy the boundary data in the trace sense, then the answer to our problem is: for a large set %no positive value 
of $\lambda$ the minimizer of (\ref{r-min}) does not satisfy the boundary data in the trace sense. 

The disadvantage of our method is that we %cannot 
give only an %precise 
estimate on the range of $\lambda$'s. 

Let us notice that always the terminal state satisfies the boundary condition in the viscosity sense, otherwise $\di z \neq0$ and the system evolves.

\begin{lemma}\label{l3.4}
Let us suppose that $u_T$ is the terminal state of the evolution, i.e. $u_T = u(T_{ext})$ for eq. (\ref{r-dyn}) with initial condition $f\in BV(\Omega)$ and $f$ satisfies (\ref{tzal}). Then,\\
(a) $0 \in \partial\bar\Phi_{0,0, \vfi}(u_T)$, i.e. $u_T$ is a minimizer of the total variation,
$$
\int_\Omega |D u_T| + \int_{\partial\Omega} |\gamma u_T -\vfi|\, d\cH^0 \le\int_\Omega |D v| + \int_{\partial\Omega} |\gamma v -\vfi|\, d\cH^0 ,\qquad \hbox{for all } v\in BV(\Omega).
$$
(b) There is  $\lambda_0>0$ such that for all $\lambda\in(0, \lambda_0)$ $u_T$ is a minimizer of $\bar\Phi_{\lambda,f, \vfi}$.
\end{lemma}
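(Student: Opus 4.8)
The plan is to handle part~(a), which is essentially already recorded in the text, separately from part~(b), which carries the content.

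Part~(a) is almost immediate. By Proposition~\ref{t-ext} the evolution (\ref{r-dyn}) started from $f$ is stationary for $t\ge T_{ext}$, so $0\in\partial\bar\Phi_{0,0,\vfi}(u_T)$ — this is exactly (\ref{0diff}) — and by definition of the subdifferential this says precisely that $u_T$ minimizes $\bar\Phi_{0,0,\vfi}$ over $BV(\Omega)$, i.e.\ the displayed inequality of part~(a). It is worth recording, via Proposition~\ref{p2.1} with $\lambda=0$, that this inclusion is witnessed by a \emph{constant} field $z_0\in[-1,1]$ with $(z_0,Du_T)=|Du_T|$ — hence $u_T$ is monotone — and with $-z_0\in\sgn(\vfi(0)-\gamma u_T(0))$, $z_0\in\sgn(\vfi(L)-\gamma u_T(L))$. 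This $z_0$ is the seed for part~(b).

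For part~(b), by (\ref{r-0ink}) and Proposition~\ref{p2.1} it suffices to build, for all small $\lambda$, a field $z_\lambda\in W^{1,\infty}(\Omega)$ with $\|z_\lambda\|_{L^\infty}\le1$, $(z_\lambda,Du_T)=|Du_T|$, $(z_\lambda)_x=\lambda(u_T-f)$, and $-z_\lambda(0)\in\sgn(\vfi(0)-\gamma u_T(0))$, $z_\lambda(L)\in\sgn(\vfi(L)-\gamma u_T(L))$; then $u_T$ minimizes $\bar\Phi_{f,\lambda,\vfi}$ and, by strict convexity, it is \emph{the} minimizer. The idea is to perturb $z_0$. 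Since $(z_\lambda,Du_T)=|Du_T|$ forces $z_\lambda\equiv\pm1$, hence $(z_\lambda)_x=0$, hence $u_T=f$, on every interval where $u_T$ is strictly monotone, the first task is to read off the structure of $u_T$. By part~(a) $u_T$ is monotone; by Lemma~\ref{l310} it has only finitely many faceted regions; and since the flow is frozen ($u_t=z_x=0$) wherever $u(\cdot,t)$ is strictly monotone and faceted preimages only grow in time, $u_T=f$ a.e.\ away from the faceted regions of $u_T$. Hence $G(x):=\int_0^x(u_T-f)\,ds$ is locally constant off the faceted regions. A mass balance along the trajectory — integrate $u_t=z_x$ over the preimage of an interior facet and use that its endpoints see a strictly monotone profile ($z=\pm1$) throughout $(0,T_{ext})$ — then gives $\int_p^q(u_T-f)\,ds=0$ for every interior facet $(p,q)$, with a one-sided inequality for facets touching $\partial\Omega$ whose sign is controlled by Theorem~\ref{zp}.

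Now one assembles $z_\lambda$ piecewise: on the monotone pieces take $z_\lambda$ equal to the constant $\pm1$ given by $z_0$ (consistent, since $(z_\lambda)_x=\lambda(u_T-f)=0$ there, and the value required at any jump point of $u_T$ is automatically $\pm1$, those points lying in $\{u_T=f\}$); on each faceted region integrate $(z_\lambda)_x=\lambda(u_T-f)$ from the already-assigned left endpoint value, the vanishing-integral property making the right endpoint value come out correctly, so that $z_\lambda\in W^{1,\infty}(\Omega)$ is well defined with the right boundary values (the integration constant at $x=0$ is free, or forced to $\pm1$, according to whether $\gamma u_T(0)=\vfi(0)$, the forced case being compatible with the sign in Theorem~\ref{zp}). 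Finally, $\|z_\lambda\|_{L^\infty}\le1$: off facets $|z_\lambda|=1$, and on a facet $z_\lambda$ departs from its endpoint value by at most $\lambda$ times the oscillation of $G$ there, which (using $\int_p^q(u_T-f)=0$) is at most $\tfrac\lambda2\|u_T-f\|_{L^1(\Omega)}$; so $\|z_\lambda\|_{L^\infty}\le1$ as soon as $\lambda<\lambda_0$ for an explicit $\lambda_0$ of order $\|f-u_T\|_{L^1}^{-1}$, consistent with the bound $\lambda_0>\tfrac12\,\|f-u_T\|_{L^1}^{-1}$ announced in the introduction.

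The main obstacle is the structural input on the terminal state — that $u_T$ agrees with $f$ off its finitely many flat pieces and that $\int(u_T-f)$ vanishes over each interior flat piece — since that is where the boundary- and facet-layer analysis of Section~\ref{s3} (Lemmas~\ref{l311}–\ref{l312}, Theorem~\ref{zp}, and the non-shrinking-facet results from \cite{milena}) is genuinely used; everything afterwards is the elementary patching of the piecewise $z_\lambda$ together with the estimate $|z_\lambda|\le1$ valid for $\lambda<\lambda_0$.
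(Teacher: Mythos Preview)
Your approach is genuinely different from the paper's, and largely sound. The paper proves part~(b) by a direct variational comparison: it tests $\bar\Phi_{f,\lambda,\vfi}$ at $u_T+\Delta u$ for step-function perturbations $\Delta u=\sum h_i\chi_{I_i}$, computes $\Delta\bar\Phi$ explicitly, and shows $\Delta\bar\Phi\ge0$ when $\lambda<2/\|f-u_T\|_{L^1}$; the interior case is an elementary estimate, while for intervals touching $\partial\Omega$ with the ``bad'' sign of $h$ it invokes Theorem~\ref{zp} on a boundary layer and a limiting trick. General $\Delta u\in BV$ is then handled by strict approximation. Your route --- build the calibration $z_\lambda$ directly from the structure of $u_T$ and a mass balance along the trajectory --- is more constructive and, once completed, actually yields the sharper information that $z_\lambda\equiv\pm1$ on $\mathrm{supp}\,Du_T$ and deviates only on facets.

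There is, however, a real gap in your final estimate. From $z_\lambda(p)=z_\lambda(q)=1$ on an (interior or forced-boundary) facet $[p,q]$ you write $|z_\lambda(x)-1|\le\lambda\,\mathrm{osc}_{[p,q]}G\le\tfrac\lambda2\|u_T-f\|_{L^1}$ and conclude $\|z_\lambda\|_{L^\infty}\le1$ for small $\lambda$. But that bound is two-sided: it allows $z_\lambda(x)>1$, which is fatal for \emph{any} $\lambda>0$. What is missing is the one-sided fact $G(x)\le G(p)$ for $x\in[p,q]$, i.e.\ $z_\lambda\le1$ unconditionally. This does hold, and by the very mass-balance idea you already use: since the monotone neighbourhood of $p$ is frozen throughout $(0,T_{ext})$ (facet preimages only grow), $z(p,t)=1$ for a.e.\ $t$, hence
\[
G(x)-G(p)=\int_p^x(u_T-f)\,ds=\int_0^{T_{ext}}\bigl(z(x,t)-z(p,t)\bigr)\,dt=\int_0^{T_{ext}}\bigl(z(x,t)-1\bigr)\,dt\le0.
\]
The same computation at $x=0$ (using $\gamma u(0,t)>\vfi(0)$ for all $t$ whenever $\gamma u_T(0)>\vfi(0)$, which is Lemma~\ref{l311} iterated) upgrades your ``one-sided inequality'' at boundary facets to the needed equality $\int_0^{q}(u_T-f)=0$ in the forced case; Theorem~\ref{zp} alone gives this only on $(0,\delta)$, which is not enough if $q>\delta$. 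With this addition your construction goes through and gives $\lambda_0\ge 2/\|f-u_T\|_{L^1}$ (indeed $\ge4/\|f-u_T\|_{L^1}$ on interior facets), matching the paper's $\lambda_1$.
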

\begin{proof}
Part (a). Since $u_T$ is the terminal state of evolution, this means that it satisfies
$$
0 \in \partial\Phi_{0, 0, \vfi}(u_T),
$$
because $\Phi_{0, 0, \vfi}(u) = \int_\Omega |D u_T| + \int_{\partial\Omega} |\gamma u_T -\vfi|\, d\cH^0$. However, the above inclusion means that $u_T$ is a minimizer of $\Phi_{f, 0, \vfi}$,

Part (b). Let us set $\lambda_1^{-1} = \frac12 \|f - u_T\|_{L^1}$. We claim that element $u_T$ is a minimizer of $\bar\Phi_{f,\eps, \vfi}$ for all  $\eps\in (0, \lambda_1)$. Hence, $\lambda_0\ge \lambda_1$.

We will show  directly that for all $v\in BV(\Omega)$ we have 
\begin{equation}\label{r6}
 \bar\Phi_{f,\eps, \vfi} (v) \ge \bar\Phi_{f,\eps, \vfi}(u_T).
\end{equation}
\begin{comment}
 claim that 
$$
|D(u + \Delta u)| = |D u| + | D\Delta u|.
$$
Indeed, we may represent $u$ and $\Delta u$ as a difference of increasing functions, 
$$
u = u^+ - u^-,\qquad \Delta u = \Delta u^+ - \Delta u^-.
$$
Hence $ u + \Delta u =  u^+ + \Delta u^+ - ( u^- + \Delta u^-)$ and 
\begin{equation}\label{rr}
|D(u + \Delta u)| =  Du^+ + D\Delta u^+ + D u^- +D \Delta u^- =
|D u | + |D\Delta u|.
\end{equation}

\end{comment}

{\it Step 1.} We first consider $v= u_T + \Delta u$ such that
%$\gamma \Delta u =0$. 
\begin{comment}
Since piecewise constant on intervals function are dense in $BV$ it is sufficient to establish (\ref{r6}) for $v$ of the following form,
\end{comment}
$$
 \Delta u = \sum_{i\in\cI} h_i \chi_{I_i},
$$
where intervals $I_i$ are pairwise disjoint. 

Our computations distinguish two cases:\\
(1) no interval $\bar I_i$ intersects $\partial\Omega$;\\
(2) there is an interval $I_{i_0}$ such that $\bar I_{i_0}\cap \partial\Omega \neq \emptyset.$

We will consider them one by one. If intervals $I_i$  defining $\Delta u$ are pairwise disjoint, then case (1) yields
$$
\int_\Omega |D v| + \int_{\partial\Omega}|v - \vfi|\,d\cH^0
= \int_\Omega |D u_T| + 2 \sum_{i\in \cI} |h_i| + \int_{\partial\Omega}|u_T - \vfi|\,d\cH^0,
$$
because the perturbation introduces jumps at the endpoints of all $I_i$. Here, we use the assumption (1). 
%\end{comment} 
Now, we look at the difference $\bar\Phi_{f, \eps, \vfi}(u_T+\Delta u) - \bar\Phi_{f, \eps, \vfi}(u_T)=:\Delta \bar\Phi$, we see
$$
%\bar\Phi_{f, \eps, \vfi}(u_T+\Delta u) - \bar\Phi_{f, \eps, \vfi}(u_T)
\Delta\bar\Phi =  2 \sum_{i\in \cI} |h_i| 
%|D \Delta u|
+
\frac{\eps}{2} 
\sum_{i\in \cI}
\int_{I_i} 
[(u_T + h_i %\Delta u
-f)^2- (u_T -f)^2]\,dx.
$$
We notice that
$$
\int_{I_i} 
[(u_T + h_i -f)^2- (u_T -f)^2]\,dx = 2\int_{I_i} h_i(u_T -f)\,dx +  \|\Delta u\|_{L^2(I_i)}^2.
$$
Hence, by definition of $\lambda_1$ we obtain
\begin{align*}
\Delta\bar\Phi & = \sum_{i\in\cI}
|h_i|(2 +\eps\sgn(h_i)\int_{I_i}(u_T -f)\,dx + \frac\eps2 |h_i||I_i|)
%|D\Delta u|(\Omega) + \eps \int_\Omega \Delta u(u_T -f)\,dx +  \eps\|\Delta u\|^2_{L^2(\Omega)}
\\
&\ge %|D\Delta u|(\Omega)  - \| \Delta u\|_{L^\infty(\Omega)} +  \eps\|\Delta u\|^2_{L^2(\Omega)}\\
\sum_{i\in\cI}
|h_i|(2 - 2\eps \lambda_1^{-1}+ \frac \eps 2|h_i||I_i|)\\
&\ge \frac\eps2\|\Delta u\|^2_{L^2(\Omega)}.
\end{align*}
\begin{comment}
 Here we use the following observation for $v\in BV(\Omega)$ with
$\gamma v =0$ for all $x\in \Omega$
$$|v(x) - \gamma v(0)| \le | Dv|(\Omega).$$
{\it Step 2.} We consider $\Delta u$ such that there is $\eta>0$
$$
\Delta u (x) = \gamma\Delta u (0)\quad\hbox{for } x\in(0,\eta),\qquad
\Delta u (x) = \gamma\Delta u (L)\quad\hbox{for } x\in(L-\eta, L).
$$
 
\end{comment}
Our claim  follows in case (1). Now, we turn our attention to case (2). The argument presented above shows that is suffices to check that $\Delta\bar\Phi:=\bar\Phi_{f,\eps,\vfi}(u_T + h\chi_I)- \bar\Phi_{f,\eps,\vfi}(u_T)\ge 0$, where $0\in \bar I$. The argument when $L\in \bar I$ is the same.  We have
$$
\Delta\bar\Phi = |h| + |\gamma u_T + h - \vfi| - |\gamma u_T  - \vfi| + \eps h \int_{I}(u_T -f)\,dx + \frac \eps2 h^2|I|.
$$
If $u_T$ satisfies the boundary condition, i.e. $\gamma u_T  = \vfi$, then the argument used to establish (1) applies  again and $\Delta\bar\Phi\ge0$. If $\gamma u_T  \neq \vfi$, then we have to argue in a different way.

We recall that 
since $u_T$ is a terminal state it means that function $\tilde u_T$
\begin{comment}
$$
\tilde u (x) =\left\{
\begin{array}{ll}
 u_T(x) & x\in \Omega,\\
 \vfi(x)  & x\in \partial\Omega
\end{array}\right. 
$$
 
\end{comment}
is monotone, this follows from Proposition \ref{p2.1}. Otherwise $u_T$ is not a terminal state. It suffices to consider the case when $\tilde u_T$ is increasing. This means that $\gamma u_T >\vfi$ at $x=0$ is the only possibility when $\gamma u_T (0) \neq \vfi(0)$ and $\vfi > \gamma u_T$ at $x=L$ when $\gamma u_T (L) \neq \vfi(L)$. 

\begin{comment}We analyse $\Delta\bar \Phi$,
\begin{align*}
\Delta\bar\Phi& =   
|D \Delta u|\\ &
+ |\gamma u -\vfi + \gamma \Delta u|(0) - |\gamma u -\vfi|(0) + 
 |\gamma u -\vfi + \gamma \Delta u|(L) - |\gamma u -\vfi|(L) \\ &+
\eps \int_\Omega \Delta u(u_T -f)\,dx +  \eps\|\Delta u\|^2_{L^2(\Omega)}
\end{align*}
\end{comment}

Let us now suppose that $h>0$. Then, $\Delta\bar\Phi $ takes on the following form,
$$
\Delta\bar\Phi = 2 h + \eps h \int_{I}(u_T -f)\,dx + \frac \eps 2 h^2|I|
$$
and by argument used in case (1), we see that $\Delta\bar\Phi\ge 0$.

Let us consider $h<0$, then
$$
\Delta\bar\Phi = |h|+ h + \eps h \int_{I}(u_T -f)\,dx + \frac \eps 2  h^2|I|=
\eps h \int_{I}(u_T -f)\,dx +\frac \eps 2  h^2|I|
.
$$
Again we consider two cases: (a) $|I|\le \delta$, (b) $|I|>\delta$, where $\delta$ is provided by Lemma \ref{l312}. 

In case (a) since $u_T$ is the terminal state, we invoke Theorem \ref{zp} to conclude
that $u_T\le f$ on $(0,\delta)$. Then, the integral on the RHS above is non-positive. Altogether this yields $ h \int_{I}(u_T -f)\,dx\ge0$. As a result,
$$
\Delta\bar\Phi \ge 0
$$
and our claim follows.

Suppose now (b) holds. Then, we replace the perturbation $h \chi_{I}$ by $(h +\eta)\chi_{(0,\delta)} + h\chi_{I'}$, where $\eta \in (0, - h_i)$ and $I' = I\setminus (0,\delta)$. This perturbation falls in the categories we considered, hence 
$$
0\le \Delta_\eta \bar \Phi := \eps (h+\eta) \int_{I}(u_T -f)\,dx + \frac \eps 2  (h+\eta)^2|I|. 
$$
Of course 
$$
0\le \lim_{\eta\to 0}  \Delta_\eta \bar \Phi = \Delta \bar \Phi.
$$
The essential observation is that the limit $\bar\Phi(u_T + \Delta_\eta u)$ as $\eta$ goes to zero equals  $\bar\Phi(u_T + \Delta_0 u)$, hence the claim follows.

{\it Step 2.} Let us suppose that $\Delta u\in BV(\Omega)$ is arbitrary. We can construct a sequence $g_n\in L^\infty(\Omega)$ converging to $\Delta u$ in $L^\infty$ and $|D g_n|(\Omega)\to |D\Delta u|$, i.e. the convergence is in the intermediate sense implying convergence of trace of $g_n$ to the trace of $\Delta$. Thus, we have
$$
\Delta\bar \Phi \ge \bar\Phi(u_T + g_n) -\bar\Phi(u_T) - \eta_n \equiv \Delta_n \bar\Phi- \eta_n,
$$
where $\eta_n\to 0$. Due to Step 1 we conclude that $ \Delta_n \bar\Phi \ge \frac\eps 2 \|g_n\|^2_{L^2(\Omega)}$. If we pass to the limit, $n\to \infty$, then we see 
$$
\Delta\bar \Phi \ge \frac\eps 2 \|\Delta u\|^2_{L^2(\Omega)}.
$$
Thus, for any $v\in BV(\Omega)$, we have (\ref{r6}).

By definition 
$$
\lambda_0 = \sup\{\lambda>0: u_T\hbox{ is a minimizer of }
\bar\Phi_{f, \mu, \vfi}\hbox{ for all }\mu\in(0,\lambda)\}.
$$
Since $\lambda_1$ satisfies the desired property, we deduce that $\lambda_0 \ge \lambda_1>0$.
\end{proof}

With this lemma at hand we can state the last of our main results. For the sake of clarity we deal separately with $f$ satisfying (\ref{tzal}) and general data $\in L^2(\Omega)$.

\begin{theorem}\label{t-m1}
Let us suppose that $f\in BV(\Omega)$ satisfies (\ref{tzal}) and $\vfi: \partial\Omega\to \bR$ are given. Then, there is $\lambda_0$ such that for all $\lambda\in (0, \lambda_0)$ the  terminal state of the evolution problem (\ref{r-dyn}),  $u_T$, is a  solution to the variational problem (\ref{r-min}). 
\end{theorem}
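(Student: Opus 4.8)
The plan is to assemble the conclusion from the machinery already in place, whose analytic core is packaged in Lemma \ref{l3.4}. The skeleton has three moves: produce the terminal state, observe it is a critical point of the relaxed total variation, and then upgrade this, for small $\lambda$, to a critical point of the full ROF functional $\bar\Phi_{f,\lambda,\vfi}$ — which by (\ref{r-0ink}) is the same as being a solution of (\ref{r-min}).

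First I would invoke Proposition \ref{t-ext}: since $f\in BV(\Omega)\subset L^2(\Omega)$, the flow (\ref{r-dyn}) has finite extinction time $T_{ext}<\infty$, so $u_T:=u(T_{ext})$ is well defined and $u(t)=u_T$ for $t\ge T_{ext}$. Being a terminal state, $u_t=0$ there, i.e. $0\in\partial\bar\Phi_{0,0,\vfi}(u_T)$, so $u_T$ minimizes the relaxed total variation $\Psi=\bar\Phi_{0,0,\vfi}$. I would also record, via Lemma \ref{l310}, that $u(t)$ inherits property (\ref{tzal}) for all $t>0$, so that $u_T$ falls within the scope of the boundary-layer results of Section \ref{s3} (Theorem \ref{zp} in particular). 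Then the decisive input is Lemma \ref{l3.4}(b): there is $\lambda_1>0$, explicitly $\lambda_1^{-1}=\tfrac{1}{2}\|f-u_T\|_{L^1}$, such that for every $\lambda\in(0,\lambda_1)$ and every $v\in BV(\Omega)$ one has $\bar\Phi_{f,\lambda,\vfi}(v)\ge\bar\Phi_{f,\lambda,\vfi}(u_T)$. Setting $\lambda_0=\sup\{\lambda>0:u_T\text{ minimizes }\bar\Phi_{f,\mu,\vfi}\text{ for all }\mu\in(0,\lambda)\}\ge\lambda_1>0$ and recalling that a function solves (\ref{r-min}) exactly when it minimizes $\bar\Phi_{f,\lambda,\vfi}$, the theorem follows; by strict convexity $u_T$ is in fact the unique solution, so $u_T=u_D$.

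I expect the only genuine obstacle to be the one already resolved inside Lemma \ref{l3.4}: controlling the fidelity term $\tfrac{\lambda}{2}\int(u-f)^2\,dx$ under perturbations $v=u_T+\Delta u$ that alter the trace of $u_T$ at a boundary endpoint where $\gamma u_T\neq\vfi$. For an interior bump, the $2\sum_i|h_i|$ generated by the new jumps of $|Dv|$ dominates the linearized fidelity change as soon as $\lambda<\lambda_1$; for a short boundary bump on $I\subset(0,\delta)$ one cannot rely on that extra jump, and must instead use the sign information $(\gamma f-\vfi)\,u_t\le 0$ in the boundary layer — the precise maximum principle of Theorem \ref{zp}, resting on Lemmas \ref{l311}--\ref{l312} — to ensure $h\int_I(u_T-f)\,dx\ge 0$; long boundary intervals reduce to this case by splitting $I=(0,\delta)\cup I'$ and passing to the limit in the perturbation amplitude. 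Since all of this is already established, the proof of Theorem \ref{t-m1} itself is a short assembly of Propositions \ref{t-ext} and the characterization (\ref{r-0ink}) together with Lemma \ref{l3.4}(b).
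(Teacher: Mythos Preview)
Your proposal is correct and follows exactly the paper's own proof, which simply cites Proposition \ref{t-ext} for the existence of $u_T$ and Lemma \ref{l3.4}(b) for the existence of $\lambda_0$. Your additional discussion of the internal mechanics of Lemma \ref{l3.4} (the interior versus boundary perturbations and the role of Theorem \ref{zp}) is accurate but goes beyond what the paper records at this point, since that work is already done inside the lemma.
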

\begin{proof}
Existence of the terminal state is the result of the final extinction time, see Proposition \ref{t-ext}. The last claim follows from Lemma \ref{l3.4}.
\end{proof}

We could treat $u_T$ as easily computable, whose attainment of boundary conditions is no longer any issue. However, we may combine this theorem with Corollary \ref{w-wb}. Then we reach an clean partial answer to the question posed by Brezis in \cite[Open Problem 3]{brezis-19}.
\begin{corollary}\label{c-HB}
 Let us suppose that $f$ satisfies assumptions of the previous theorem. If in addition
 $$
 \gamma f(0) \not\in (\min \vfi, \max\vfi),\quad\hbox{and}\quad
 \gamma f(L) \not\in (\min \vfi, \max\vfi),
 $$
 then $\gamma u_T = \vfi$. \qed
\end{corollary}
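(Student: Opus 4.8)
The plan is to deduce Corollary \ref{c-HB} by combining Theorem \ref{t-m1}, which gives that $u_T$ is a minimizer of the variational problem for small $\lambda$, with Corollary \ref{w-wb}, which already asserts exactly the implication we want for the terminal state. So the proof is essentially an invocation of two earlier results, but I should check that the hypotheses line up.

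First I would observe that $f$ satisfies the assumptions of Theorem \ref{t-m1}, i.e.\ $f\in BV(\Omega)$ and $f$ satisfies \eqref{tzal}; this is precisely what ``$f$ satisfies assumptions of the previous theorem'' means. Hence Theorem \ref{t-m1} applies and produces $\lambda_0>0$ such that for $\lambda\in(0,\lambda_0)$ the terminal state $u_T$ of \eqref{r-dyn} is a solution of \eqref{r-min}. Next I would verify that the hypotheses of Corollary \ref{w-wb} are met: it requires $f\in BV$ satisfying \eqref{tzal} (already assumed) and the two membership conditions $\gamma f(0)\notin(\min\vfi,\max\vfi)$ and $\gamma f(L)\notin(\min\vfi,\max\vfi)$, which are exactly the extra assumptions imposed in the statement of the Corollary. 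Therefore Corollary \ref{w-wb} yields directly that $\gamma u_T=\vfi$.

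The one point worth spelling out is that Corollary \ref{w-wb} is phrased purely in terms of the terminal state $u_T$ of the total variation flow and makes no reference to the parameter $\lambda$ or to the variational problem \eqref{r-min}; its conclusion $\gamma u_T=\vfi$ is therefore unconditional on $\lambda$. What Theorem \ref{t-m1} adds is the identification of this $u_T$ as the minimizer $u_D$ of $\bar\Phi_{f,\lambda,\vfi}$ for $\lambda\in(0,\lambda_0)$; combining the two statements then tells us that for such $\lambda$ the genuine minimizer of \eqref{r-min} attains the Dirichlet data in the trace sense, which is the clean partial answer to Brezis's Open Problem~3. I do not anticipate any real obstacle here, since both ingredients are already established in the excerpt; the only care needed is to note that the hypotheses of the two cited results are literally the hypotheses assumed in the Corollary, so no further work is required.

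\begin{proof}
By hypothesis $f\in BV(\Omega)$ satisfies \eqref{tzal}, so Theorem \ref{t-m1} applies and yields $\lambda_0>0$ such that for every $\lambda\in(0,\lambda_0)$ the terminal state $u_T$ of \eqref{r-dyn} is a solution of \eqref{r-min}. The additional assumptions $\gamma f(0)\notin(\min\vfi,\max\vfi)$ and $\gamma f(L)\notin(\min\vfi,\max\vfi)$ are precisely those of Corollary \ref{w-wb}, whose conclusion gives $\gamma u_T=\vfi$. Hence for $\lambda\in(0,\lambda_0)$ the minimizer $u_T$ of $\bar\Phi_{f,\lambda,\vfi}$ attains the boundary data in the trace sense.
\end{proof}
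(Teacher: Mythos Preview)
Your proposal is correct and matches the paper's approach exactly: the paper marks the corollary with \qed\ and indicates in the preceding sentence that it follows by combining Theorem \ref{t-m1} with Corollary \ref{w-wb}, which is precisely what you do. In fact the stated conclusion $\gamma u_T=\vfi$ is literally the conclusion of Corollary \ref{w-wb} under identical hypotheses, so invoking Theorem \ref{t-m1} is only needed for the interpretation as a minimizer of $\bar\Phi_{f,\lambda,\vfi}$, as you correctly note.
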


Unfortunately, the condition stated in the corollary above is not necessary as the following example shows.

\begin{example}
Let us take $L=2$, $\vfi(0) = -1$, $\vfi(2)=1$ and 
$$
f(x) = \left\{\begin{array}{ll}
        0, & x\in (0, \frac 1/2) \cup [\frac 32, 2),\\
        -k, & x\in [\frac12, 1),\\
        k, & x\in [1, \frac 32),
       \end{array}\right.
$$
where $k\ge 1$. It is easy to check that in this case $$
u_T (x)=\left\{\begin{array}{ll}
             -1, &  x\in (0, 1),\\
             1, & x \in (1, 2).
            \end{array}\right.
$$
\end{example}

We are now turning our attention to the case of general $f\in L^2$. We offer only a result similar to that in Theorem \ref{t-m1}.

\begin{theorem}\label{t-m2}
Let us suppose that $f\in L^2(\Omega)$  and $\vfi: \partial\Omega\to \bR$ are given. Then,  there is $\lambda_0$ such that for all $\lambda\in (0, \lambda_0)$ the  terminal state of the evolution problem (\ref{r-dyn}),  $u_T$, is a  solution to the variational problem (\ref{r-min}). 
\end{theorem}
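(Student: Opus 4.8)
The plan is to deduce the general case from Theorem \ref{t-m1} by approximating the datum $f$ with the regularized functions $u(t)$, $t>0$ small, obtained by letting the flow (\ref{r-dyn}) evolve from $f$ for a short time, and then by passing to the limit $t\to0^+$ through the continuous dependence of minimizers, Proposition \ref{pp1}(a). First I would dispose of the trivial case $f=u_T$: since $u_T$ satisfies $0\in\partial\bar\Phi_{0,0,\vfi}(u_T)$ (it is a terminal state), the same element $z$ realizes $0\in\partial\bar\Phi_{f,\lambda,\vfi}(f)$ for every $\lambda>0$, because the term $\lambda(u-f)$ in (\ref{r2.2}) vanishes at $u=f$; hence $u_T=f$ solves (\ref{r-min}) for all $\lambda$ and we may take $\lambda_0=+\infty$. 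So assume from now on $f\ne u_T$, and set $c:=\|f-u_T\|_{L^1(\Omega)}$, which is finite and positive since $\Omega$ is bounded and $f,u_T\in L^2(\Omega)$. I will take $\lambda_0:=\bigl(\tfrac12 c\bigr)^{-1}=\tfrac{2}{c}$, the threshold $\lambda_1$ occurring in the proof of Lemma \ref{l3.4}(b) for the value $\|f-u_T\|_{L^1}=c$; in particular $\lambda_0\ge\tfrac1{2\|f-u_T\|_{L^1}}$, the bound announced in the introduction.

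Now fix $\lambda\in(0,\lambda_0)$, so $\tfrac12\lambda c<1$. By the regularizing property of the flow, $u(t)\in D(\partial\bar\Phi_{0,0,\vfi})\subset BV(\Omega)$ for every $t>0$; moreover $u(t)\to f$ in $L^2(\Omega)$ as $t\to0^+$, hence, $\Omega$ being bounded, also in $L^1(\Omega)$, so $\|u(t)-u_T\|_{L^1}\to c$. Also, by Proposition \ref{tmf} and the structure theory of \cite{milena}, $u(t)$ satisfies (\ref{tzal}) for a.e. $t>0$. Therefore I may pick a sequence $t_n\downarrow0$ with $f_n:=u(t_n)$ satisfying (\ref{tzal}) and $\tfrac12\lambda\|f_n-u_T\|_{L^1}<1$ for every $n$. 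By the semigroup property, the solution of (\ref{r-dyn}) with initial datum $f_n$ is $s\mapsto u(t_n+s)$, whose terminal state is again $u_T$. Hence Lemma \ref{l3.4}(b), applied with datum $f_n$ and parameter $\eps=\lambda$, which lies below the threshold $\bigl(\tfrac12\|f_n-u_T\|_{L^1}\bigr)^{-1}$ of its proof, shows that $u_T$ is a minimizer — hence, by strict convexity of $\bar\Phi_{f_n,\lambda,\vfi}$, the unique minimizer — of $\bar\Phi_{f_n,\lambda,\vfi}$.

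It remains to pass to the limit $n\to\infty$. Since $f_n\to f$ in $L^2(\Omega)$ and $\lambda>0$, Proposition \ref{pp1}(a) gives that the minimizers of $\bar\Phi_{f_n,\lambda,\vfi}$ converge in $L^2(\Omega)$ to the minimizer of $\bar\Phi_{f,\lambda,\vfi}$. But that sequence of minimizers is constantly equal to $u_T$, so the minimizer of $\bar\Phi_{f,\lambda,\vfi}$ is $u_T$; that is, $u_T$ solves (\ref{r-min}). As $\lambda\in(0,\lambda_0)$ was arbitrary, the theorem follows.

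The main obstacle is the uniformity of the approximation rather than any single estimate. Two things must be controlled at once. First, the admissibility threshold $\bigl(\tfrac12\|f_n-u_T\|_{L^1}\bigr)^{-1}$ of Lemma \ref{l3.4} at the approximating datum $f_n$ must stay bounded away from $0$ as $n\to\infty$; this is exactly why one uses that the flow converges to $f$ in $L^1$ at $t=0^+$, and it is what pins down $\lambda_0$. Second, one needs a sequence of times $t_n\downarrow0$ at which $u(t_n)$ enjoys the structural hypothesis (\ref{tzal}), so that Lemma \ref{l3.4} — whose proof leans on the boundary-layer statement Theorem \ref{zp} — can be invoked at $f_n$; this is where the regularity and structure results of \cite{milena} enter. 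A direct attempt to verify $0\in\partial\bar\Phi_{f,\lambda,\vfi}(u_T)$ for general $f$ would instead break down, since $f$ need neither admit a trace on $\partial\Omega$ nor possess the one-sided monotonicity near $\partial\Omega$ that (\ref{tzal}) supplies.
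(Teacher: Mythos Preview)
Your proof is correct and follows essentially the same route as the paper's: approximate $f$ by $u(\tau_n)$ at typical times $\tau_n\downarrow 0$, invoke Lemma \ref{l3.4}(b) at each $f_n=u(\tau_n)$, and pass to the limit via Proposition \ref{pp1}(a). The only notable difference is in securing that $\lambda$ lies below the threshold for every $f_n$: the paper uses the contraction property of the semigroup, $\|u(\tau_n)-u_T\|\le\|f-u_T\|$ (since $u_T$ is a fixed point), which gives the bound for all $n$ at once, whereas you use $\|f_n-u_T\|_{L^1}\to c$ and pick $n$ large enough---both are valid, and the paper's version is marginally cleaner.
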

\begin{proof}
Existence of the terminal state $u_T$ is a consequence of Proposition \ref{t-ext}.

We know that a.e. $\tau>0$ is a typical time, as a result we may pick a sequence $\tau_n>0$ converging to zero such that each $u(\tau_n)$ satisfies the assumptions of the previous theorem. We conclude that the terminal state $u_T$ for the dynamics is the minimizer of $\bar \Phi_{u(\tau_n), \lambda, \vfi}$ for $\lambda\in (0, \lambda_0)$, where $\lambda_0 \ge 2/\| u_T - u(\tau_n)\|_{L^2}$. 

Now, let us take any $\lambda < 2/\| u_T -f\|_{L^2}$. Since (\ref{r-dyn}) is a semigroup of contractions, we see that 
$$
\| u(\tau_n) - u_T\|_{L^2} \le \| f - u_T\|_{L^2}.
$$
i.e. $\lambda < 2/ \| u_T -u(\tau_n)\|_{L^2}$. We may
apply Proposition \ref{pp1} to pass to the limit with $ n\to \infty$ to deduce that $u_T$ is the minimizer of $\bar \Phi_{f, \lambda, \vfi}$. 
%Lemma gave an estimate for $\lambda_0$ to be at least $2/\| f - u_T\|_{L^2}$. Since $u(\tau) \to f$ in $L^2$ norm we conclude that the same estimate is valid in the present case.
\end{proof}

%The arguement presented in Lemma \ref{l3.4} above depends in an essential way on the fact that there is no motion of $u_T$ in a neighborhood of  the boundary, more precisely that $\tilde u$ is monotone near endpoints of $\Omega$. If we knew that we could extend the above result.     

%We do not know if for a given time instant $t\in (0, T_{ext})$ there is a parameter $\lambda(t)$ such that the solution to the dynamical problem $u(t)$ is a minimizer of the functional $\bar\Phi_{u_0, \lambda(t),\vfi}$. If we knew that our answer to the original question would be more complete.

Unfortunately, when we only  know that $f\in L^2$ we cannot offer a clean condition for the attainment of the boundary conditions  by $u_T$. 

Let us stress that we showed a range of parameters $\lambda$ for which the terminal state of evolution of (\ref{r-dyn}) is a minimizer of (\ref{r-min}). The terminal states are minimizers of the relaxed total  variation, hence they are monotone functions as observed in \cite{milena}. Hence, we found a partial solution to \cite[Open Problem 3]{brezis-19}.
%If we take into account Theorem \ref{zp}, then we see we found a positive answer to the conjecture xx in \cite{brezis-19}.

%The continuous dependence justifies the following observation.

%\begin{proposition}
% Let us suppose that the solution $u_D$ to problem (\ref{r-min}) for data $D = (f, \lambda, \vfi)$ satisfies the boundary condition in the trace  sense. Then, the is a neigborhood of $U\subset L^2(\Omega)\times \bR_+\times \bR^2$ of $D$ such that for all $D_1\in U$ the  problem (\ref{r-min}) has solution in the trace sense for data $D_1$.
%\end{proposition}
%\begin{proof}
% We extend the problem to $L^2(\Omega')$, where $\Omega' = (-1, L+1)$ with the embedding $\iota: BV(\Omega) \to  BV(\Omega')$ by formula,
%$$
% \iota(v)(x) =\left\{
% \begin{array}{ll}
%  v(x) & x\in \Omega,\\
%  \vfi(0) & x\in (-1,0],\\
%  \vfi(L) & x\in [L, L+1).
% \end{array}
%\right. 
%$$
%Then  $|D(\iota u_n)|$
%and the results of Section 2 say that $\iota(u_n) $ converge stricly, and this implies convergence  of traces. ????
%\end{proof}

\section{Negative results}\label{s5}

We are going to present two types of results. The first one says that if $\lambda>0$ and $f\in L^2$ are fixed, then we can find boundary conditions which are too far apart for existence of minimizers attaining the data in the trace sense. Another result says that despite the continuous dependence of $u_D$ on $D$, the following implication is false:
\begin{quote}
 if  for a data $D$ the minimizer $u_D$ attains the boundary conditions in the trace sense and the distance between $D$ and $D'$ is small, then 
the minimizer $u_{D'}$ attains the boundary conditions in the trace sense.
\end{quote}

\subsection{Negative results large  data}

Essentially, the results established in the previous section, i.e. Theorem \ref{t-m1} and Theorem \ref{t-m2} were valid for small $\lambda$. Here, we shall show that if $\lambda>0$ is arbitrary and the boundary data $\vfi$ are large in a sense explained below, then the minimizer of $\bar\Phi_{f,\lambda,\vfi}$ does not satisfy the boundary conditions in the trace sense.

We have already noticed in Proposition \ref{p2.1} that the minimizer of $\bar\Phi_{f,\lambda,\vfi}$ satisfies the boundary condition in the {\it viscosity sense}, see Definition \ref{d-bc}. %\cite{mazon}, \cite{milena}. 
This means that if for $u\in BV(\Omega)$ and $\vfi:\partial\Omega\to\bR$ we define  function $\tilde u$ by formula (\ref{r-ty}),
%\begin{equation}\label{r4-1}
%\tilde u (x) =\left\{ 
%\begin{array}{ll}
% u(x) & x\in \Omega,\\
% \vfi(x) & x\in \partial\Omega,
%\end{array}\right.
%\end{equation}
then for each $a\in \partial\Omega$ there exists $U_a$, a neighborhood of $a$, such that $\tilde u |_{U_a}$ is a monotone function.

Here is our observation.

\begin{theorem} [non-existence for large boundary data]
Let us suppose that $f\in BV(\Omega)$ and 
$u\in BV(\Omega)$ is a minimizer of $\bar\Phi_{f, \lambda, 0}$. There exists %$\vfi \in \bR^2$  assume that
$\vfi:\partial\Omega\to\bR$ such that
$$|\vfi(L) - \vfi(0)| > |Df|(\Omega) + \int_{\partial\Omega} |\gamma f| \, d\cH^0
$$ and $\tilde u$ defined in (\ref{r-ty}) satisfies the boundary conditions $\vfi$ in the viscosity sense. Then,\\
(1)  $u$ is a minimizer of $\bar\Phi_{f, \lambda, \vfi}$;\\
(2) $\gamma u\neq \vfi$;
\end{theorem}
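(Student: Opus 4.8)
The statement bundles three assertions: the existence of such a $\vfi$, claim (1) that $u$ minimizes $\bar\Phi_{f,\lambda,\vfi}$, and claim (2) that $\gamma u\neq\vfi$. I would prove them in the order (2), (1), existence, since the first two are short and the construction of $\vfi$ carries the real content.

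\emph{Claim (2).} Testing the minimality of $u$ against $f$ itself gives $\bar\Phi_{f,\lambda,0}(u)\le\bar\Phi_{f,\lambda,0}(f)$, and discarding the nonnegative fidelity and boundary terms on the left yields the a priori bound
$$
|Du|(\Omega)\le |Df|(\Omega)+\int_{\partial\Omega}|\gamma f|\,d\cH^0 .
$$
In one dimension $\gamma u(L)-\gamma u(0)=Du((0,L))$, so $|\gamma u(L)-\gamma u(0)|\le |Du|(\Omega)$. Were $\gamma u=\vfi$, this would force $|\vfi(L)-\vfi(0)|\le |Df|(\Omega)+\int_{\partial\Omega}|\gamma f|\,d\cH^0$, contradicting the hypothesis on $\vfi$; hence $\gamma u\neq\vfi$.

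\emph{Claim (1).} Since $u$ minimizes $\bar\Phi_{f,\lambda,0}$, Proposition \ref{p2.1} provides $z\in W^{1,\infty}(\Omega)$ with $\|z\|_{L^\infty}\le 1$, $(z,Du)=|Du|$ as measures, $0=-z_x+\lambda(u-f)$, and the boundary relations $-z(0)\in\sgn(-\gamma u(0))$, $z(L)\in\sgn(-\gamma u(L))$. The first three conditions do not involve the boundary datum, so they hold verbatim for $\bar\Phi_{f,\lambda,\vfi}$; and the hypothesis that $\tilde u$ satisfies $\vfi$ in the viscosity sense is, after unwinding Definition \ref{d-bc} and \eqref{r2.3}, exactly $-z(0)\in\sgn(\vfi(0)-\gamma u(0))$ and $z(L)\in\sgn(\vfi(L)-\gamma u(L))$. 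Thus the same $z$ witnesses $0\in\partial\bar\Phi_{f,\lambda,\vfi}(u)$ through Proposition \ref{p2.1}, i.e. $u$ is a minimizer of $\bar\Phi_{f,\lambda,\vfi}$.

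\emph{Existence of $\vfi$.} This is the main step and, I expect, the main obstacle. Starting from an admissible $z$ as above, one exploits the freedom in the boundary values: if $z(0)=1$ then every $\vfi(0)\le\gamma u(0)$ keeps $-z(0)=-1\in\sgn(\vfi(0)-\gamma u(0))$, so $\vfi(0)$ may be pushed to $-\infty$; if $z(0)=-1$ then $\vfi(0)$ may be taken arbitrarily large; and the same dichotomy holds at $L$. Choosing the two signs appropriately makes $|\vfi(L)-\vfi(0)|$ exceed the threshold $|Df|(\Omega)+\int_{\partial\Omega}|\gamma f|\,d\cH^0$ while the viscosity condition — hence claim (1) — persists, and claim (2) then applies. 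The delicate situation is when the admissible $z$ is pinned strictly inside $(-1,1)$ at both endpoints, which forces $\vfi(0)=\gamma u(0)$ and $\vfi(L)=\gamma u(L)$ and hence a small boundary jump; this can only occur when $\gamma u(0)=\gamma u(L)=0$ with $u$ locally constant near each endpoint, and handling it requires a finer analysis of the admissible $z$ near $\partial\Omega$ (when $u$ is globally constant there is still one degree of freedom $z\mapsto z+c$, subject to $\|z+c\|_{L^\infty}\le 1$, which one uses to slide $z(0)$ or $z(L)$ to $\pm1$, again invoking the bound on $|Du|(\Omega)$ to keep the prescribed jump compatible with the total variation). This boundary bookkeeping is the place where the argument needs the most care.
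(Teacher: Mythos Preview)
Your arguments for (1) and (2) are correct and coincide with the paper's: for (1) the paper also invokes Proposition~\ref{p2.1} and observes that the same $z$ from the $\vfi=0$ problem satisfies the boundary inclusions for $\vfi$, and for (2) it uses precisely the bound $\bar\Phi_{f,\lambda,0}(u)\le\bar\Phi_{f,\lambda,0}(f)$ together with $|\gamma u(L)-\gamma u(0)|\le|Du|(\Omega)$.

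On the existence of $\vfi$ you are considerably more careful than the paper. The paper's entire treatment is the sentence ``We can take any $\vfi(0)$ (resp.\ $\vfi(L)$) such that $z$ for $u$ and $\tilde u$ are the same'' followed by the remark that ``the set of $\vfi$ satisfying imposed conditions is open''; it does not isolate or discuss the case $|z(0)|,|z(L)|<1$. Your observation that this forces $\gamma u(0)=\gamma u(L)=0$ with $u$ locally constant near each endpoint, and your sketch of using the additive freedom $z\mapsto z+c$ when $u$ is globally constant, already go beyond what the paper supplies. The residual case you flag (non-constant $u$ that is flat near both endpoints with the unique $z$ strictly interior at $0$ and $L$) is a genuine boundary-bookkeeping issue that the paper does not address either, so your proposal is at least as complete as the published proof on this point.
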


\begin{proof} Part (1). There exists a minimizer of $\bar \Phi_{f,\lambda, 0}$. We can take any $\vfi(0)$ (resp. $\vfi(L)$) such that $z$ for $u$ and $\tilde u$ are the same. Hence $u$ is a minimizer of $\bar\Phi_{f, \lambda, \vfi}$. We notice that the set of $\vfi$ satisfying imposed conditions is open.
 
Part (2) is easy. Of course
$$
|\vfi(L) - \vfi(0)| >|Df|(\Omega) + \int_{\partial\Omega} |\gamma f| \ge |Du|(\Omega) + \int_{\partial\Omega} |\gamma u| + \frac\lambda 2\int_\Omega(u - f)^2\, dx.
$$
Thus,
$$
|\vfi(L) - \vfi(0)| > |Du|(\Omega) \ge |(\gamma u)(L) - (\gamma u)(0)|.
$$
As a result the boundary conditions cannot be attained in the trace sense.
\end{proof}
\begin{comment}

Here we present a similar result in case $\gamma f \neq \vfi$ and $\lambda$ is large.

\begin{theorem}
Let us suppose that $f\in BV(\Omega)$, $\gamma f \neq \vfi$ and 
$\{\lambda_n\}_{n=0}^\infty$ is any sequence converging to $\infty$. If $\{u_n\}_{n=0}^\infty$ is the corresponding sequence of minimizers of (\ref{r-min}), then $\gamma u_n  \neq \vfi$ for large $\lambda_n$.
\end{theorem}
\begin{proof}
Since $f\in BV(\Omega)$, then the sequence $\{\bar\Phi_{f,\lambda_n, \vfi}(u_n)\}_{n=0}^\infty$ is  uniformly bounded. Indeed
$$
\bar\Phi_{f,\lambda_n, \vfi}(u_n)\le \bar\Phi_{f,\lambda_n, \vfi}(f) = \int_\Omega |Df|+ \int_{\partial\Omega} |\gamma f- \vfi| \, d\cH^0<\infty.
$$
If we divide the above estimate by $\lambda_n$ and let $n$ go to infinity, we see that
$$
\frac 12 \|u_n -f \|^2_{L^2} \le \frac 1{\lambda_n}\left(\int_\Omega |Df| + \int_{\partial\Omega} |\gamma f- \vfi| \, d\cH^0 \right) \to 0.
$$
The lower semicontinuity of the total variation yields [NIE TAK PROSTO]
$$
|Df| (\Omega) \le \liminf_{n\to \infty} |Du_n | (\Omega) \le |Df| (\Omega).
$$
This means the intermediate convergence of the sequence $\{u_n\}_{n=0}^\infty$, which in turn implies that $\gamma u_n \to \gamma f\neq \vfi$.  Thus, our claim follows.
\end{proof}
 
\end{comment}

\subsection{The lack of continuous dependence}

We explain that it is unreasonable to expect that if $(f_1,\vfi_1)$ and  $(f_2,\vfi_2)$ are close and minimizer $u_{f_1, \lambda, \vfi_1}$ satisfies the boundary condition in the trace sense so does  $u_{f_2, \lambda, \vfi_2}$, which is close to $u_{f_1, \lambda, \vfi_1}$ by Proposition \ref{pp1}.

\begin{proposition} ~ \\
 (a) There exist $f_\eps, f_0\in L^2(\Omega)$, $\vfi\in \bR^2$, $\lambda>0$ such that $f_\eps\to f_0$ in $L^2$ and $\gamma u_{f_\eps, \lambda, \vfi} = \vfi$
 for all $\eps>0$, but  $\gamma u_{f_0, \lambda, \vfi} \neq \vfi$.\\
 (b) There exist $\vfi_\eps, \vfi_0\in\bR^2$, $\vfi\in \bR^2$, $\lambda>0$ such that $\vfi_\eps\to \vfi_0$ in $\bR^2$ and $\gamma u_{f, \lambda, \vfi_0} = \vfi$, but  $\gamma u_{f, \lambda, \vfi_\eps} \neq \vfi_\eps$
 for all $\eps>0$. 
\end{proposition}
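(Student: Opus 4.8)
The plan is to combine Theorem~\ref{t4.1} — which forces trace attainment whenever the forcing term lies in $BV(\Omega)$ and already carries the prescribed boundary values — with the continuous dependence Proposition~\ref{pp1}, and to push the data along $BV$-functions with trace $\vfi$ towards a limiting datum for which attainment provably fails. In each part the only genuine content is to exhibit that limiting configuration and check, by a short total variation estimate, that its minimizer misses $\vfi$.

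Part (a). Fix $\lambda>0$ (any value works), put $f_0\equiv 0$ and $\vfi=(-1,1)$. First I would verify directly that $\gamma u_{f_0,\lambda,\vfi}\neq\vfi$: testing $\bar\Phi_{0,\lambda,\vfi}$ against the competitor $v\equiv 0$ gives $|Du_{f_0,\lambda,\vfi}|(\Omega)\le\bar\Phi_{0,\lambda,\vfi}(0)=|\vfi(0)|+|\vfi(L)|=|\vfi(L)-\vfi(0)|$, whereas $\gamma u_{f_0,\lambda,\vfi}=\vfi$ would give $|Du_{f_0,\lambda,\vfi}|(\Omega)\ge|\gamma u(L)-\gamma u(0)|=|\vfi(L)-\vfi(0)|$; equality throughout $\bar\Phi$ then forces $\frac{\lambda}{2}\int_\Omega u^2\,dx=0$, i.e.\ $u\equiv 0$, contradicting $\gamma u(0)=\vfi(0)\neq 0$. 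Next I would take $f_\eps$ to be the step function equal to $\vfi(0)$ on $(0,\eps)$, to $0$ on $[\eps,L-\eps]$ and to $\vfi(L)$ on $(L-\eps,L)$; then $f_\eps\in BV(\Omega)$, $\gamma f_\eps=\vfi$, so Theorem~\ref{t4.1} yields $\gamma u_{f_\eps,\lambda,\vfi}=\vfi$, while $\|f_\eps\|_{L^2}^2=(\vfi(0)^2+\vfi(L)^2)\eps\to 0$, hence $f_\eps\to f_0$ in $L^2(\Omega)$ and Proposition~\ref{pp1}(a) gives $u_{f_\eps,\lambda,\vfi}\to u_{f_0,\lambda,\vfi}$ in $L^2(\Omega)$. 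This is exactly the claimed configuration.

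Part (b). Fix $\lambda>0$, put $f\equiv 0$, $\vfi_0=\vfi=(0,0)$ and $\vfi_\eps=(-\eps,\eps)$. That $u_{f,\lambda,\vfi_0}\equiv 0$ with $\gamma u_{f,\lambda,\vfi_0}=(0,0)=\vfi_0$ is immediate. For $\vfi_\eps$ I would again show $u_{f,\lambda,\vfi_\eps}\equiv 0$: for every $v\in BV(\Omega)$ one has $\bar\Phi_{0,\lambda,\vfi_\eps}(v)\ge|Dv|(\Omega)+|\gamma v(0)+\eps|+|\gamma v(L)-\eps|\ge|\gamma v(L)-\gamma v(0)|+|\gamma v(0)+\eps|+|\eps-\gamma v(L)|\ge 2\eps$ by the triangle inequality (using $|Dv|(\Omega)\ge|\gamma v(L)-\gamma v(0)|$), while $\bar\Phi_{0,\lambda,\vfi_\eps}(0)=2\eps$, so $0$ is the unique minimizer by strict convexity. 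Hence $\gamma u_{f,\lambda,\vfi_\eps}=(0,0)\neq(-\eps,\eps)=\vfi_\eps$ for every $\eps>0$, while $\vfi_\eps\to\vfi_0$ and (trivially, all minimizers being $0$) $u_{f,\lambda,\vfi_\eps}\to u_{f,\lambda,\vfi_0}$ in $L^2(\Omega)$, in agreement with Proposition~\ref{pp1}(b). With this choice $\vfi=\vfi_0=(0,0)$, so the statement holds verbatim however one reads the roles of $\vfi$ and $\vfi_0$ there.

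The one place needing a little care is part (a): the limiting datum must be irregular enough that its minimizer genuinely cannot reach $\vfi$, while the approximants must be honest $BV$ functions with trace exactly $\vfi$ and $L^2$-norm tending to zero — the step-function construction does both, and the failure of attainment for $f_0\equiv 0$ reduces to the short total variation estimate above. Part (b) is essentially free, resting only on the elementary fact that a constant beats every $BV$ competitor as soon as the extended boundary profile can be chosen monotone.
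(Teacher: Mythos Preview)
Your proof is correct, but takes a somewhat different route from the paper's. The paper works with an arbitrary monotone decreasing $f_0$ (resp.\ $f$ in part (b)) and shows that $f_\eps$ is \emph{itself} the minimizer of $\bar\Phi_{f_\eps,\lambda,\vfi}$ for every $\eps\ge 0$, by verifying directly that $0\in\partial\bar\Phi_{f_\eps,\lambda,\vfi}(f_\eps)$ via the subdifferential characterisation of Proposition~\ref{p2.1} (a monotone function admits a constant $z\equiv\pm 1$). Non-attainment at $\eps=0$ then follows because $\gamma f_0\neq\vfi$ by construction. You instead specialise to $f_0\equiv 0$, do not identify the approximating minimizers explicitly, and delegate trace attainment for $\eps>0$ to Theorem~\ref{t4.1}; for the limiting case you rule out attainment by a short total variation comparison against the zero competitor. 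In part (b) the contrast is similar: the paper again observes that the data $f$ is its own minimizer regardless of $\vfi_\eps$, while you compute the minimizer to be $0$ via the triangle inequality. Your argument is more elementary and self-contained (no subdifferential check), at the price of being tied to the specific choice $f\equiv 0$; the paper's argument is structurally cleaner and exhibits a whole family of examples, but leans on the reader accepting the $0\in\partial\bar\Phi$ verification.
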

\begin{proof}
 (a) We take any $\lambda>0$  any monotone decreasing function $f_0$. We choose $\vfi$ so that $\vfi(0) > \gamma f_0(0)$ and $\vfi(L) = \gamma f_0(L)$. Moreover, for any $\eps>0$ we define  
$$
f_\eps = \vfi(0) \chi_{(0,\eps)} + f_0  \chi_{[\eps, L)} .
$$
Obviously $\gamma f_\eps = \vfi$ for $\eps>0$ and  $\gamma f_0\neq \vfi$. We can also easily check that
$$
0 \in \partial\bar\Phi_{f_\eps, \lambda, \vfi}(f_\eps), \qquad \eps\ge 0.
$$
Hence, $f_\eps$ is a minimizer of $\bar\Phi_{f_\eps, \lambda, \vfi}$. By Proposition \ref{pp1} the limit of minimizers is the minimizer of (\ref{r-min}) for $\bar \Phi_{f_0,\lambda,\vfi}$. Hence, $f_0$ is a minimizer, but it does not satisfy the boundary conditions. Our claim follows.

(b) We proceed in a similar manner. Let us take any $\lambda$ and any decreasing function $f$. We set $\vfi_0 := \gamma f$ and
$$
\vfi_\eps(0) = \vfi_0(0) + \eps,\qquad \vfi_\eps(L) = \vfi_0(L).
$$
We notice that $\tilde f_\eps$ defined as in (\ref{r-ty})
is a decreasing function and 
$\gamma \tilde f_\eps \neq \vfi_\eps$. However,
$f_\eps$ is a minimizer of $\bar \Phi_{f_\eps, \lambda, \vfi_\eps}$. Our claim follows again.
\end{proof}

%Wysłać do IFB

\end{document}